\newtheorem{remark}{Remark}[section]
\newcommand{\R}{{\mathbb R}}
\newcommand{\N}{{\mathbb N}}
\newcommand{\sL}{{\mathbb L}}
\DeclareMathOperator{\argmin}{argmin}
\newcommand{\cE}{{\mathcal E}}
\newcommand{\cH}{{\mathcal H}}
\def\<{\langle}
\def\>{\rangle}
\setlist[enumerate]{label=$($\roman*$)$,leftmargin=3pt,align=left}
\begin{document}

\title{Recovering Nesterov accelerated dynamics from Heavy Ball dynamics via time rescaling\thanks{Submitted to the editors DATE.
\funding{RIB was partially supported by the Austrian Science Fund (FWF), project project P 34922-N. DAH was supported by the Doctoral Programme \emph{Vienna Graduate School on Computational Optimization (VGSCO)}, funded by the Austrian Science Fund (FWF), project W1260-N35. DKN’s research was funded by the Postdoctoral Scholarship Programme of Vingroup Innovation Foundation (VINIF), code VINIF.2024.STS.37.}}}

\headers{Recovering accelerated dynamics from Heavy Ball dynamics}{H. Attouch, R.I. Bo\c{t}, D.A. Hulett, and D.-K. Nguyen}

\author{Hedy Attouch\thanks{IMAG, Univ. Montpellier, CNRS, Montpellier, France, 
\email{hedy.attouch@umontpellier.fr}.} 
\and 
Radu Ioan Bo\c{t}\thanks{Faculty of Mathematics, University of Vienna, A-1090 Vienna, Austria, \email{radu.bot@univie.ac.at}.} 
\and David Alexander Hulett\thanks{Faculty of Mathematics, University of Vienna, Oskar-Morgenstern-Platz 1, 1090 Vienna, Austria, \email{david.alexander.hulett@univie.ac.at}.} 
\and 
Dang-Khoa Nguyen\thanks{Faculty of Mathematics and Computer Science, University of Science, Ho Chi Minh City, Vietnam, Vietnam National University, Ho Chi Minh City, Vietnam, \email{ndkhoa@hcmus.edu.vn}.}}

\maketitle

\begin{abstract}
In a real Hilbert space, we consider two classical problems: the global minimization of a smooth and convex function $f$ (i.e., a convex optimization problem) and finding the zeros of a monotone and continuous operator $V$ (i.e., a monotone equation). Attached to the optimization problem, first we study the asymptotic properties of the trajectories generated by a second-order dynamical system which features a constant viscous friction coefficient and a positive, monotonically increasing function $b(\cdot)$ multiplying $\nabla f$. When $b(\cdot)$ is identically $1$, we recover the Heavy Ball with friction dynamics introduced by Polyak in 1964. For a generated solution trajectory $y(t)$, we show small $o$ convergence rates dependent on $b(t)$ for $f(y(t)) - \min f$, and the weak convergence of $y(t)$ towards a global minimizer of $f$. In 2015, Su, Boyd and Candés introduced a second-order system which could be seen as the continuous-time counterpart of Nesterov's accelerated gradient. As the first key point of this paper, we show that for a special choice for $b(t)$, these two seemingly unrelated dynamical systems are connected: namely, they are time reparametrizations of each other. Every statement regarding the continuous-time accelerated gradient system may be recovered from its Heavy Ball counterpart. 

As the second key point of this paper, we observe that this connection extends beyond the optimization setting. Attached to the monotone equation involving the operator $V$, we again consider a Heavy Ball-like system which features an additional correction term which is the time derivative of the operator along the trajectory. We derive small $o$ rates for $\| V(y(t))\|$, which depend on two rescaling functions $\mu(\cdot)$ and $\gamma(\cdot)$, and show the weak convergence of $y(t)$ to a zero of $V$. For special choices for $\mu(\cdot)$ and $\gamma(\cdot)$, we establish a time reparametrization equivalence with the Fast OGDA dynamics introduced by Bo\c t, Csetnek and Nguyen in 2022, which can be seen as an analog of the continuous accelerated gradient dynamics, but for monotone operators. Again, every statement regarding the Fast OGDA system may be recovered from a Heavy Ball-like system. 
    \end{abstract}

\begin{keywords}
Nesterov accelerated gradient method; Heavy Ball with friction; damped inertial dynamic; time scaling; monotone equations; monotone operator flow; convergence rates; convergence of trajectories
\end{keywords}

\begin{AMS}
37N40, 47H05, 47J20, 90C25 
\end{AMS}

\section{Introduction}\label{sec: section 1}
\subsection{Convex optimization}\label{subsec: subsec 11}
    
In a real Hilbert space $\mathcal{H}$, for a convex and continuously differentiable function $f : \mathcal{H} \to \R$, consider the minimization problem 
    \begin{equation}\label{eq: minimization problem}
        \min_{x\in \mathcal{H}} f(x),
    \end{equation}
which we assume to have an optimal solution. Su, Boyd and Candès \cite{SBC} observed that the second-order dynamical system
    \begin{equation}\label{eq: AVD}
        \ddot{x}(s) + \frac{\alpha}{s} \dot{x}(s) + \nabla f(x(s)) = 0 \quad \mbox{for} \ s \geq s_0 >0,
    \end{equation}
can be viewed as a continuous-time counterpart to Nesterov’s accelerated gradient method \cite{Nest1, Nest2}, designed to efficiently solve this problem. Attouch, Chbani, Peypouquet and Redont \cite{ACPR} and May \cite{May} showed that when $\alpha > 3$ any trajectory $x(s)$ generated by this dynamical system satisfies $f(x(s)) - \inf_{\mathcal{H}} f = o \left( \frac{1}{s^{2}}\right)$ and converges weakly to a global minimizer of $f$ as $s\to +\infty$. 

Also connected to the problem \eqref{eq: minimization problem}, we have the Heavy Ball dynamics first introduced by Polyak \cite{Polyak1,Polyak2}
    \begin{equation}
        \ddot{y}(t) + \lambda \dot{y}(t) + \nabla f(y(t)) = 0, 
    \end{equation}
where the name comes from the mechanical interpretation of this system: $y(t)$ describes the horizontal position of an object which moves alongside the graph of the function $f$ in a medium with viscous friction coefficient $\lambda$. For more details about this derivation, we refer the reader to \cite{AGR}. \'Alvarez \cite{Alv} first showed that any trajectory $y(t)$ generated by the Heavy Ball dynamics satisfies $f(y(t)) \to \inf_{\mathcal{H}} f$ and converges weakly to a global minimizer of $f$ as $t\to +\infty$. Later, it was proved that the functional values actually exhibit a rate $f(y(t)) - \inf_{\mathcal{H}} f = \mathcal{O}\left( \frac{1}{t}\right)$ as $t\to +\infty$. Observe how the rates for the functional values improve when instead of a constant friction coefficient $\lambda$, we have a so-called asymptotically vanishing damping $\frac{\alpha}{t}$ accompanying the velocity. 

In \cite{ACR-rescale}, Attouch, Chbani and Riahi analyzed a more general version of \eqref{eq: AVD} which reads 
    \begin{equation}\label{eq: time rescaled AVD, function, introduction}
        \ddot{x}(s) + \frac{\alpha}{s} \dot{x}(s) + b(s) \nabla f(x(s)) = 0,
    \end{equation}
    where $b : [s_{0}, +\infty[ \to \R_{++}$ is a differentiable and nondecreasing function. They explain how the presence of $b(\cdot)$ can be interpreted as the result of a time reparametrization of \eqref{eq: AVD}, which can yield faster convergence rates for the functional values provided $b(\cdot)$ is correctly chosen. Following the same idea, in Section \ref{sec: section 2} we will study the asymptotic properties of 
    \begin{equation}\label{eq: time rescaled Heavy Ball, function, introduction}
        \ddot{y}(t) + \lambda \dot{y}(t) + b(t) \nabla f(y(t)) = 0 \quad \mbox{for} \ t \geq t_0 \geq 0.
    \end{equation}
    We will show that if $b(\cdot)$ satisfies the growth condition $\sup_{t\geq t_{0}} \frac{\dot{b}(t)}{b(t)} < \lambda$, then any trajectory $y(t)$ generated by \eqref{eq: time rescaled Heavy Ball, function, introduction} satisfies $f(y(t)) - \inf_{\mathcal{H}} f = o\left( \frac{1}{\int_{\frac{t_{0} + t}{2}}^{t} b(r) dr}\right)$, and it converges weakly to a global minimizer of $f$ as $t\to +\infty$. Furthermore, and as one of the main points of this paper, in Section \ref{sec: connection between heavy ball and AVD, function case} we will show that for an appropriate choice for $b(\cdot)$, \eqref{eq: time rescaled Heavy Ball, function, introduction} and \eqref{eq: AVD} are time reparametrizations of each other. Every statement regarding \eqref{eq: AVD} may be recovered from properties of \eqref{eq: time rescaled Heavy Ball, function, introduction}. \vspace{1ex}
    
    \begin{mdframed}
            \begin{quote}
             	\centering
                Nesterov accelerated gradient dynamics can be recovered from the Heavy Ball dynamics through a time rescaling process.  
            \end{quote}
    \end{mdframed}
    
    \subsection{Monotone equations}\label{subsec: subsec 12}
    It turns out that this story goes beyond the optimization setting and has an analog for monotone equations. For a continuous and monotone operator $V : \mathcal{H} \to \mathcal{H}$, i.e. $\langle V(y) - V(x), y-x \rangle \geq 0$ for every $x,y \in \mathcal{H}$, consider the problem of finding the zeros of $V$. It is known that the gradient flow dynamics applied to $V$,
\begin{equation}\label{eq:monotoneflow}
        \dot{x}(s) + V(x(s)) = 0,
\end{equation}
fail in general to produce a trajectory which converges weakly to a zero of $V$, unless $V$ is cocoercive, i.e., for some $\beta >0$, it holds $\langle V(y) - V(x), y-x \rangle \geq \beta \|V(y) - V(x)\|^2$ for every $x,y \in \mathcal{H}$. Attouch, Bo\c t and Nguyen \cite{ABN} proved that in the latter case any trajectory $x(s)$ generated by \eqref{eq:monotoneflow} satisfies $\|V(x(s))\| = o\left(\frac{1}{\sqrt{s}}\right)$ and converges weakly to a zero of $V$ as $s \to +\infty$.
    
However, as shown by Attouch and Svaiter in \cite{AttouchSvaiter}, by adding a correction term which is the time derivative of the operator along the trajectory, i.e.,
\begin{equation}\label{eq: gradient descent + correction, operator}
        \lambda(s)\dot{x}(s) + \frac{d}{ds} V(x(s)) + V(x(s)) = 0,
    \end{equation}
where $\lambda(\cdot)$ is a positive function, then even in case of a monotone operator $V$ any trajectory $x(s)$ generated by this system satisfies $\| V(x(s))\| \to 0$ and weakly converges to a zero of $V$ as $s\to +\infty$. For $\lambda$ constant, the previous system can be rewritten equivalently as being of the form \eqref{eq:monotoneflow}, but applied to the Moreau-Yosida  envelope of $V$, which is always cocoercive. Combining second-order in time dynamics with an asymptotic vanishing damping of the form $\frac{\alpha}{s}$ has already been proven to produce fast convergence properties in the optimization setting; it is perhaps no too surprising that this effect can be transposed to the monotone inclusion setting. Indeed, for a general, possibly set-valued maximally monotone operator $V : \mathcal{H} \to 2^{\mathcal{H}}$, Attouch and Peypouquet studied in \cite{AP-max} the system
    \begin{equation}\label{eq: AVD for Moreau envelope, operator}
        \ddot{x}(s) + \frac{\alpha}{s} \dot{x}(s) + V_{\lambda(s)} (x(s)) = 0, \quad \text{where} \quad V_{\lambda} := \frac{1}{\lambda} \Bigl( \operatorname{Id} - (\operatorname{Id} + \lambda V)^{-1}\Bigr). 
    \end{equation}   
$V_{\lambda}$ is the aforementioned Moreau-Yosida envelope of $V$ of index $\lambda > 0$ of $V$. It is a known fact that $V$ and $V_{\lambda}$ share the same set of zeros.  For the above dynamics, if $\lambda(s)$ grows as $s^{2}$, the authors show a rate of convergence of $o\left( \frac{1}{s^{2}}\right)$ for $\| V_{\lambda(s)} (x(s))\|$, as well as the weak convergence of $x(s)$ to a zero of $V$ as $s\to +\infty$.

If $V$ is single-valued and continuous, we would prefer a scheme that evaluates $V(x(s))$ directly, rather than through its Moreau-Yosida envelope, also to have a setting that allows explicit discretizations and therefore forward evaluations of $V$, not as for \eqref{eq: AVD for Moreau envelope, operator}. Combining the ideas of having a correction term like in \eqref{eq: gradient descent + correction, operator} together with second-order and asymptotic vanishing terms like in \eqref{eq: AVD for Moreau envelope, operator} and a time rescaling coefficient similar to that of \eqref{eq: time rescaled AVD, function, introduction} gives rise to the Fast OGDA dynamics
    \begin{equation}\label{eq: fast OGDA, introduction}
        \ddot{x}(s) + \frac{\alpha}{s} \dot{x}(s) + \beta(s) \frac{d}{ds} V(x(s)) + \frac{1}{2} \left( \dot{\beta}(s) + \frac{\alpha}{s} \beta(s)\right) V(x(s)) = 0 \quad \mbox{for} \ s \geq s_0 >0,
    \end{equation}
introduced and studied by Bo\c t, Csetnek and Nguyen in \cite{fOGDA}. Provided that $\beta(\cdot)$ fulfills a growth condition, the authors show a rate of $o\left( \frac{1}{s \beta(s)}\right)$ for $\| V(x(s))\|$ and the weak convergence of $x(s)$ towards a zero of $V$ as $s\to +\infty$. For a more general system where a damping of the form $\frac{\alpha}{s^{r}}$ is considered, we refer the reader to \cite{fOGDA with r}. When $\beta(\cdot) \equiv 1$, the dynamics \eqref{eq: fast OGDA, introduction} reads 
    \begin{equation}\label{eq: fast OGDA, case beta(s)=1, introduction}
        \ddot{x}(s) + \frac{\alpha}{s} \dot{x}(s) + \frac{d}{ds} V(x(s)) + \frac{\alpha}{2s} V(x(s)) = 0. 
    \end{equation}
This is perhaps the most interesting case, since this system admits an explicit discretization which has identical convergence properties to its continuous-time counterpart, i.e., an algorithm which generates a sequence $(x_{k})_{k\in\N}$, combining Nesterov momentum with operator correction terms and using only forward evaluations of $V$,  and which fulfills $\| V(x_{k})\| = o\left( \frac{1}{k}\right)$ and converges weakly towards a zero of $V$ as $k\to +\infty$. 

In Section \ref{sec: Heavy ball system, operator}, we add an inertial term $\ddot{y}(t)$ to \eqref{eq: gradient descent + correction, operator} and we scale the terms $\frac{d}{dt} V(y(t))$ and $V(y(t))$ through positive functions $\mu(t)$ and $\gamma(t)$, which gives rise to the system
    \begin{equation}\label{eq: heavy ball system, operator, introduction}
        \ddot{y}(t) + \lambda \dot{y}(t) + \mu(t) \frac{d}{dt} V(y(t)) + \gamma(t) V(y(t)) = 0 \quad \mbox{for} \ t \geq t_0 \geq 0.
    \end{equation}
    Since we have a constant viscous friction coefficient $\lambda$ attached to $\dot{y}(t)$, this system can be seen as the Heavy Ball dynamics governed by a monotone and continuous operator $V$. Under a growth condition involving $\lambda$, $\mu{(\cdot)}$ and $\gamma{(\cdot)}$, we show that any trajectory $y(t)$ generated by this system fulfills $\| V(y(t))\| = o\left( \frac{1}{\mu(t)}\right)$ and converges weakly to a zero of $V$ as $t\to +\infty$. Additionally, and as the second main point of this paper, in Section \ref{sec: connection between heavy ball and AVD, operator case} we will show that for an appropriate choice of $\mu{(\cdot)}$ and $\gamma{(\cdot)}$, \eqref{eq: heavy ball system, operator, introduction} and \eqref{eq: fast OGDA, case beta(s)=1, introduction} are time reparametrizations of each other. Every statement regarding \eqref{eq: fast OGDA, case beta(s)=1, introduction} may be recovered from properties of \eqref{eq: heavy ball system, operator, introduction}.\vspace{1ex}
    
    \begin{mdframed}
            \begin{quote}
             	\centering
                The Fast OGDA dynamics can be recovered from the Heavy Ball dynamics for monotone equations through a time rescaling process.  
            \end{quote}
    \end{mdframed}

\section{Heavy Ball with friction dynamic from the time scaling perspective}\label{sec: section 2}

As mentioned in the introduction, for $\lambda > 0$ and $b \colon \left[ t_{0} , + \infty \right[ \to \R_{++}$ a differentiable and nondecreasing function we consider the following dynamical system
\begin{equation}
\label{HBF:eq}
\tag{\ensuremath{\mathrm{HBF}_{\lambda}}}
\ddot{y} \left( t \right) + \lambda \dot{y} \left( t \right) + b \left( t \right) \nabla f \left( y \left( t \right) \right) = 0 \quad \mbox{for} \ t \geq t_0 \geq 0,
\end{equation}
with Cauchy data $y \left( t_{0} \right) := y_{0}$ and $\dot{y} \left( t_{0} \right) := y_{1}$, where $y_{0}, y_{1} \in \cH$. 
It will turn out that the friction parameter $\lambda$ will play an important role when connecting \eqref{HBF:eq} to other dynamics.

We lay out this section as follows: in Subsection \ref{subsec: existence and uniqueness of solutions function case}, we show the existence and uniqueness of solutions to \eqref{HBF:eq}, and in Subsection \ref{subsec: convergence statements function case} we discuss their asymptotic properties. We now introduce the energy function that will help in our analysis, and show an initial bound that will be needed later. Assume that $x_{*} \in \argmin f$, the set of minimizers of $f$, and that $t\mapsto y(t)$ solves \eqref{HBF:eq} for $t\geq t_{0}$. For $0 \leq \eta \leq \lambda$ we define on $[t_0, +\infty)$
\begin{equation*}
\cE_{\eta} \left( t \right) := b \left( t \right) \left( f \left( y \left( t \right) \right) - \inf\nolimits_{\cH} f \right) + \dfrac{1}{2} \left\lVert \eta \left( y \left( t \right) - x_{*} \right) + \dot{y} \left( t \right) \right\rVert ^{2} + \dfrac{1}{2} \eta \left( \lambda - \eta \right) \left\lVert y \left( t \right) - x_{*} \right\rVert ^{2} .
\end{equation*}
The time derivative of $\cE_{\eta}(\cdot)$ at $t\geq t_{0}$ gives
\begin{align}
\dfrac{d}{dt} \cE_{\eta} \left( t \right) = \ & \dot{b} \left( t \right) \left( f \left( y \left( t \right) \right) - \inf\nolimits_{\cH} f \right) + b \left( t \right) \left\langle \nabla f \left( y \left( t \right) \right) , \dot{y} \left( t \right) \right\rangle \nonumber \\
& + \left\langle \eta \left( y \left( t \right) - x_{*} \right) + \dot{y} \left( t \right) , \eta \dot{y} \left( t \right) + \ddot{y} \left( t \right) \right\rangle + \eta \left( \lambda - \eta \right) \left\langle y \left( t \right) - x_{*} , \dot{y} \left( t \right) \right\rangle . \label{HBF:dE:pre}
\end{align}
According to the distributive property of the inner product and the equation \eqref{HBF:eq} we have 
\begin{align*}
& \left\langle \eta \left( y \left( t \right) - x_{*} \right) + \dot{y} \left( t \right) , \eta \dot{y} \left( t \right) + \ddot{y} \left( t \right) \right\rangle \nonumber \\
= \ 	& \left\langle \eta \left( y \left( t \right) - x_{*} \right) + \dot{y} \left( t \right) , \lambda \dot{y} \left( t \right) + \ddot{y} \left( t \right) \right\rangle + \left( \eta - \lambda \right) \left\langle \eta \left( y \left( t \right) - x_{*} \right) + \dot{y} \left( t \right) , \dot{y} \left( t \right) \right\rangle \nonumber \\
= \ 	& - b \left( t \right) \left\langle \eta \left( y \left( t \right) - x_{*} \right) + \dot{y} \left( t \right) , \nabla f \left( y \left( t \right) \right) \right\rangle + \eta \left( \eta - \lambda \right)  \left\langle y \left( t \right) - x_{*} , \dot{y} \left( t \right) \right\rangle + \left( \eta - \lambda \right) \left\lVert \dot{y} \left( t \right) \right\rVert ^{2} .
\end{align*}
By plugging this expression into \eqref{HBF:dE:pre}, we deduce that for every $t \geq t_0$
\begin{align}
\dfrac{d}{dt} \cE_{\eta} \left( t \right) & = \dot{b} \left( t \right) \left( f \left( y \left( t \right) \right) - \inf\nolimits_{\cH} f \right) - \eta b \left( t \right) \left\langle y \left( t \right) - x_{*} , \nabla f \left( y \left( t \right) \right) \right\rangle + \left( \eta - \lambda \right) \left\lVert \dot{y} \left( t \right) \right\rVert ^{2} \nonumber \\
& \leq \left( \dot{b} \left( t \right) - \eta b \left( t \right) \right) \left( f \left( y \left( t \right) \right) - \inf\nolimits_{\cH} f \right) + \left( \eta - \lambda \right) \left\lVert \dot{y} \left( t \right) \right\rVert ^{2} , \label{HBF:dE:inq}
\end{align}
where the last inequality comes from the convexity of $f$.

\subsection{Existence and uniqueness of global solutions}\label{subsec: existence and uniqueness of solutions function case}

We may rewrite \eqref{HBF:eq} as a first order system in two variables. It is simple to check that 
\begin{equation}\label{HBF:eq with initial conditions}
   \tag{\ensuremath{\mathrm{HBF}_{\lambda}}}
        \begin{cases}
            \ddot{y}(t) + \lambda \dot{y}(t) + b(t) \nabla f(y(t)) = 0, \\
            y(t_{0}) = y_{0}, \quad \dot{y}(t_{0}) = y_{1}
        \end{cases}
    \end{equation}
is equivalent to 
\[
    \begin{cases}
        \dot{y}(t) &= {u}(t) - \lambda y(t), \\
        \dot{{u}}(t) &= -b(t) \nabla f(y(t)), \\
        y(t_{0}) &= y_{0}, \\
        {u}(t_{0}) &= \lambda y_{0} + y_{1}, 
    \end{cases}
\]
where 
\[
{u}(t) := \lambda y(t) + \dot{y}(t). 
\]
We may write the system above in a more compact way, namely,
\begin{equation}\label{eq: HBF function reformulated as first order}
    \begin{cases}
        \bigl( \dot{y}(t), \dot{{u}}(t)\bigr) &= G(t, y(t), {u}(t)) \\
        (y(t_{0}), {u}(t_{0})) &= \bigl(y_{0}, \: \lambda y_{0} + y_{1}\bigr), 
    \end{cases}
\end{equation}
where $G :[t_{0}, +\infty[ \times \mathcal{H} \times \mathcal{H} \to \mathcal{H} \times \mathcal{H}$ is given by 
\[
    G(t, y, {u}) := \Bigl({u} - \lambda y, \: -b(t) \nabla f(y)\Bigr). 
\]
In order to establish uniqueness and global existence of the solution trajectory, we require the following assumption.\vspace{1ex}
\begin{mdframed}
Suppose that $\lambda > 0$ and $b \colon \left[ t_{0} , + \infty \right[ \to \R_{++}$ additionally satisfy
\begin{equation}
\label{HBF:b-lambda}
\sup\limits_{t \geq t_{0}} \dfrac{\dot{b} \left( t \right)}{b \left( t \right)} < \lambda .
\end{equation}
\end{mdframed}
\begin{theorem}
Suppose further that $\nabla f$ is Lipschitz continuous on bounded sets, $b \colon \left[ t_{0} , + \infty \right[ \to \R_{++}$ is continuously differentiable, and that $\lambda$ and $b(\cdot)$ satisfy assumption \eqref{HBF:b-lambda}. Then, the dynamical system \eqref{HBF:eq with initial conditions} admits a unique global solution $y : [t_{0}, +\infty[ \to \mathcal{H}$. 
\end{theorem}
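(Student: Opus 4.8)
The plan is to obtain a unique \emph{local} solution from the Cauchy--Lipschitz (Picard--Lindel\"of) theorem applied to the first-order reformulation \eqref{eq: HBF function reformulated as first order}, and then to promote it to a \emph{global} one by deriving an a priori bound on the trajectory that precludes blow-up in finite time.

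First I would check the hypotheses of Cauchy--Lipschitz for \eqref{eq: HBF function reformulated as first order}. The map $G(t,y,u) = (u - \lambda y,\, -b(t)\nabla f(y))$ is continuous in $t$ (since $b$ is continuous) and, for every bounded set $B \subset \cH \times \cH$ and every compact interval $[t_0, T]$, it is Lipschitz continuous in $(y,u)$ on $B$ uniformly in $t \in [t_0,T]$: the first component is affine, and for the second one uses that $b$ is bounded on $[t_0,T]$ together with the assumption that $\nabla f$ is Lipschitz on bounded sets. Hence there is a unique maximal solution $(y,u) \in C^{1}([t_0, T_{\max}[;\,\cH\times\cH)$, equivalently a unique maximal solution $y$ of \eqref{HBF:eq with initial conditions} on $[t_0, T_{\max}[$, with $t_0 < T_{\max} \le +\infty$.

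Next comes the key point, where the growth condition \eqref{HBF:b-lambda} is used. Fix any $x_* \in \argmin f$. Since $\sup_{t\geq t_0} \dot b(t)/b(t) < \lambda$, I can choose $\eta \in (0,\lambda)$ with $\eta \ge \sup_{t\geq t_0} \dot b(t)/b(t)$, so that $\dot b(t) - \eta b(t) \le 0$ and $\eta(\lambda-\eta) > 0$ for all $t \ge t_0$. Plugging these two facts, together with $f(y(t)) \ge \inf_{\cH} f$, into the estimate \eqref{HBF:dE:inq} shows that $t \mapsto \cE_{\eta}(t)$ is nonincreasing on $[t_0,T_{\max}[$, hence $\cE_{\eta}(t) \le \cE_{\eta}(t_0)$ there. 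As each of the three summands defining $\cE_{\eta}$ is nonnegative, this yields $\tfrac{1}{2}\eta(\lambda-\eta)\|y(t)-x_*\|^{2} \le \cE_{\eta}(t_0)$ and $\tfrac{1}{2}\|\eta(y(t)-x_*)+\dot y(t)\|^{2} \le \cE_{\eta}(t_0)$ on $[t_0,T_{\max}[$; the first bounds $y(t)$ and, combined with the second, bounds $\dot y(t)$, hence also $u(t) = \lambda y(t) + \dot y(t)$.

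Finally, suppose for contradiction that $T_{\max} < +\infty$. On $[t_0, T_{\max}[$ the trajectory $y(t)$ lies in a fixed bounded set, so $\nabla f(y(t))$ is bounded (a map Lipschitz on bounded sets is bounded on bounded sets), $b$ is bounded on $[t_0,T_{\max}]$, and therefore $\dot y(t) = u(t) - \lambda y(t)$ and $\dot u(t) = -b(t)\nabla f(y(t))$ are bounded on $[t_0, T_{\max}[$. Thus $(y,u)$ is Lipschitz, hence uniformly continuous, on $[t_0,T_{\max}[$ and extends continuously to $t = T_{\max}$; restarting the Cauchy problem \eqref{eq: HBF function reformulated as first order} at $T_{\max}$ produces a strict prolongation of the maximal solution, a contradiction. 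Hence $T_{\max} = +\infty$, which is the claim. The only genuine difficulty is the a priori bound of the previous paragraph, and \eqref{HBF:b-lambda} is exactly what makes the required choice of $\eta \in (0,\lambda)$ possible; everything else is the standard continuation argument.
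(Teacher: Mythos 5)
Your proposal is correct and takes essentially the same route as the paper: the first-order reformulation, local existence and uniqueness via Cauchy--Lipschitz thanks to $\nabla f$ and $b(\cdot)$ being Lipschitz on bounded sets, then the choice of $\eta\in(0,\lambda)$ permitted by \eqref{HBF:b-lambda} making $\mathcal{E}_{\eta}$ nonincreasing, which bounds $(y,u)$ on $[t_{0},T_{\max}[$ and forces $T_{\max}=+\infty$. The only cosmetic difference is that the paper invokes the blow-up alternative of the cited existence theorem directly, whereas you rerun the standard continuation argument by hand (bounding the vector field along the trajectory and extending past $T_{\max}$); both are valid.
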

\begin{proof}
Since $b(\cdot)$ is continuously differentiable, it is Lipschitz continuous on the bounded subsets of $[t_{0}, +\infty[$. This, together with our other assumptions ensures that $G$ is Lipschitz continuous on the bounded subsets of $[t_{0}, +\infty[ \times\mathcal{H} \times \mathcal{H}$. According to \cite[Theorems 46.2 and 46.3]{SellYou}, the ordinary differential equation \eqref{eq: HBF function reformulated as first order} admits a unique continuously differentiable solution $t \mapsto (y(t), {u}(t))$ defined on an interval $[t_{0}, T_{\text{max}}[$, where 
\[
\text{either} \quad T_{\text{max}} = +\infty \quad \text{or} \quad \lim_{t\to T_{\text{max}}} \| (y(t), {u}(t))\| = +\infty. 
\]
According to \eqref{HBF:b-lambda}, there exists $0 < \eta_{0} < \lambda$ such that
\[
\frac{\dot{b}(t)}{b(t)} \leq \eta_{0} < \lambda \quad \forall t\geq t_{0}.
\]
We may now define the energy functional $\mathcal{E}_{\eta_{0}}(\cdot)$ restricted to the interval $[t_{0}, T_{\text{max}}[$. According to the previous inequality and \eqref{HBF:dE:inq}, we have for every $t \in [t_{0}, T_{\text{max}}[$
\[
\frac{d}{dt} \mathcal{E}_{\eta_{0}}(t) \leq \left( \sup_{t\geq t_{0}} \frac{\dot{b}(t)}{b(t)} - \eta_{0}\right) b(t) \Bigl( f(y(t)) - \inf\nolimits_{\mathcal{H}}f\Bigr) + (\eta_{0} - \lambda) \bigl\| \dot{y}(t)\bigr\|^{2} \leq 0 .
\]
This means that $\mathcal{E}_{\eta_{0}}(\cdot)$ is nonincreasing on $[t_{0}, T_{\text{max}}[$. In particular,  we obtain that for every $t\in [t_{0}, T_{\text{max}}[$ it holds 
\[
\frac{1}{2} \bigl\| \eta_{0}(y(t) - x_{*}) + \dot{y}(t)\bigr\|^{2} + \frac{1}{2} \eta_{0}(\lambda - \eta_{0}) \| y(t) - x_{*}\|^{2} \leq \mathcal{E}_{\eta_{0}}(t) \leq \mathcal{E}_{\eta_{0}}(t_{0}). 
\]
This immediately yields that $t \mapsto \|y(t) - x_{*} \|$ and thus $t \mapsto  \|y(t) \|$ are bounded on $[t_{0}, T_{\text{max}}[$. Since we also obtain that $t \mapsto \| \eta_{0} (y(t) - x_{*}) + \dot{y}(t)\|$ is bounded on $[t_{0}, T_{\text{max}}[$, using the triangle inequality, we obtain that $t \mapsto \bigl\| \dot{y}(t)\bigr\|$ and thus $t \mapsto \|{u}(t)\|$ are bounded on $[t_{0}, T_{\text{max}}[$. So it must be $T_{\text{max}} = +\infty$, which completes the proof of this theorem. 
\end{proof}

\subsection{Convergence rates for function values and weak convergence of trajectories}\label{subsec: convergence statements function case}

We recall that $f: \cH \rightarrow \R$ is assumed to be convex and continuously differentiable function, $\lambda >0$ and $b \colon [t_{0}, +\infty[ \to \R_{++}$ is a differentiable and nondecreasing function. We will now assume that we have a global solution $y \colon [t_{0}, +\infty[ \to \mathcal{H}$ to \eqref{HBF:eq}. 

\begin{proposition}
\label{prop:HBF}
Suppose further that $\lambda$ and $b(\cdot)$ satisfy assumption \eqref{HBF:b-lambda}.
Let $y \colon \left[ t_{0} , + \infty \right[ \to \cH$ be a solution trajectory of \eqref{HBF:eq}.
Then, the following statements are true: 
\begin{enumerate}
\item \label{prop:HBF:int} \emph{(integrability results)}
It holds
\begin{equation*}
\int_{t_{0}}^{+ \infty} b \left( t \right) \left( f \left( y \left( t \right) \right) - \inf\nolimits_{\cH} f \right) dt < + \infty \quad \mbox{and} \quad
\int_{t_{0}}^{+ \infty} \left\lVert \dot{y} \left( t \right) \right\rVert ^{2} dt < + \infty .
\end{equation*}

\item \label{prop:HBF:lim} \emph{(energy functions convergence)}
The limit $\lim_{t \to + \infty} \cE_{\eta} \left( t \right) \in \R$ exists for every $\eta$ satisfying $0 \leq \eta \leq \lambda$.
\end{enumerate}
\end{proposition}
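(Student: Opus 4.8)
The starting point is the energy estimate \eqref{HBF:dE:inq}, which I would specialize to a cleverly chosen parameter. By assumption \eqref{HBF:b-lambda} there exists $0 < \eta_0 < \lambda$ with $\dot{b}(t)/b(t) \le \eta_0$ for all $t \ge t_0$; equivalently $\dot{b}(t) - \eta_0 b(t) \le 0$. Plugging $\eta = \eta_0$ into \eqref{HBF:dE:inq} and using $f(y(t)) - \inf_{\cH} f \ge 0$ together with $\eta_0 - \lambda < 0$, I get
\begin{equation*}
\frac{d}{dt}\cE_{\eta_0}(t) \le \bigl(\dot{b}(t) - \eta_0 b(t)\bigr)\bigl(f(y(t)) - \inf\nolimits_{\cH} f\bigr) + (\eta_0 - \lambda)\norm{\dot{y}(t)}^2 \le 0,
\end{equation*}
so $\cE_{\eta_0}(\cdot)$ is nonincreasing, hence bounded above by $\cE_{\eta_0}(t_0)$ and, being nonnegative, it is bounded. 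Integrating the displayed inequality from $t_0$ to $T$ and rearranging gives
\begin{equation*}
\int_{t_0}^{T} \bigl(\eta_0 b(t) - \dot{b}(t)\bigr)\bigl(f(y(t)) - \inf\nolimits_{\cH} f\bigr)\, dt + (\lambda - \eta_0)\int_{t_0}^{T}\norm{\dot{y}(t)}^2\, dt \le \cE_{\eta_0}(t_0) - \cE_{\eta_0}(T) \le \cE_{\eta_0}(t_0),
\end{equation*}
and letting $T \to +\infty$ both integrands being nonnegative yields $\int_{t_0}^{+\infty}\norm{\dot{y}(t)}^2\,dt < +\infty$ and $\int_{t_0}^{+\infty}\bigl(\eta_0 b(t) - \dot{b}(t)\bigr)\bigl(f(y(t)) - \inf f\bigr)\,dt < +\infty$. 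To conclude the first integrability claim I would like to replace $\eta_0 b(t) - \dot{b}(t)$ by $b(t)$; this is where I would use a possibly smaller constant. Since $\sup_{t\ge t_0}\dot{b}(t)/b(t) < \lambda$, pick $\eta_0$ strictly between that supremum and $\lambda$; then $\eta_0 b(t) - \dot{b}(t) \ge \bigl(\eta_0 - \sup_s \dot{b}(s)/b(s)\bigr) b(t) = c\, b(t)$ for a constant $c > 0$, so $\int_{t_0}^{+\infty} b(t)\bigl(f(y(t)) - \inf f\bigr)\,dt \le c^{-1}\int_{t_0}^{+\infty}\bigl(\eta_0 b(t) - \dot{b}(t)\bigr)\bigl(f(y(t)) - \inf f\bigr)\,dt < +\infty$. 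This proves part \ref{prop:HBF:int}.

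For part \ref{prop:HBF:lim} I would show that $\cE_\eta(\cdot)$ is a nonincreasing function plus an absolutely continuous perturbation with integrable derivative, for \emph{every} $\eta \in [0,\lambda]$, not just $\eta_0$. From \eqref{HBF:dE:inq},
\begin{equation*}
\frac{d}{dt}\cE_\eta(t) \le \bigl(\dot{b}(t) - \eta b(t)\bigr)\bigl(f(y(t)) - \inf\nolimits_{\cH} f\bigr) + (\eta - \lambda)\norm{\dot{y}(t)}^2 \le \abs{\dot{b}(t) - \eta b(t)}\bigl(f(y(t)) - \inf\nolimits_{\cH} f\bigr),
\end{equation*}
and since $\dot{b}(t) \le \eta_0 b(t)$, one has $\abs{\dot{b}(t) - \eta b(t)} \le (\eta_0 + \eta) b(t) \le (\eta_0 + \lambda) b(t)$, whose product with $f(y(t)) - \inf f$ is integrable over $[t_0,+\infty[$ by part \ref{prop:HBF:int}. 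Thus $t \mapsto \cE_\eta(t) - \int_{t_0}^{t}\abs{\dot{b}(r) - \eta b(r)}\bigl(f(y(r)) - \inf f\bigr)\,dr$ is nonincreasing and bounded below (the last two terms of $\cE_\eta$ are nonnegative, and $b(t)(f(y(t)) - \inf f) \ge 0$), hence it has a limit in $\R$ as $t \to +\infty$; adding back the convergent integral, $\lim_{t\to+\infty}\cE_\eta(t)$ exists in $\R$. Alternatively, and a little more cleanly, I would write $\frac{d}{dt}\cE_\eta(t) = \phi(t) - \psi(t)$ with $\phi(t) := \bigl(\dot{b}(t) - \eta b(t)\bigr)^+ (f(y(t)) - \inf f) \ge 0$ integrable and $\psi(t) \ge 0$, so $\cE_\eta(t) - \int_{t_0}^t \phi$ is nonincreasing and bounded below; same conclusion.

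The only genuinely delicate point is the interplay between the two constants: one must choose a single $\eta_0$ that simultaneously makes $\cE_{\eta_0}$ nonincreasing (needs $\eta_0 \le \lambda$, and $\dot b/b \le \eta_0$ for the functional-value term to have the right sign) and leaves a strictly positive coefficient $c\, b(t)$ in front of $f(y(t)) - \inf f$ after integration (needs $\eta_0 > \sup_t \dot b(t)/b(t)$); the strict inequality in \eqref{HBF:b-lambda} is exactly what makes such an $\eta_0$ available. Everything else — the sign bookkeeping, the nonnegativity of the quadratic tail of $\cE_\eta$, the passage to the limit in the integrals — is routine. Note that boundedness of the trajectory and of $\dot y$ is not needed here (it was already obtained in the existence theorem and is in any case implied by $\cE_{\eta_0}$ being bounded), so the proof is essentially self-contained given \eqref{HBF:dE:inq}.
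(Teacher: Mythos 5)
Your proposal is correct and follows essentially the same route as the paper: pick $\eta_0$ strictly between $\sup_{t\ge t_0}\dot b(t)/b(t)$ and $\lambda$, integrate \eqref{HBF:dE:inq} with $\eta=\eta_0$ to obtain both integrability claims, and for general $\eta\in[0,\lambda]$ dominate $\frac{d}{dt}\cE_\eta$ by an integrable function of the form (constant)$\cdot b(t)\bigl(f(y(t))-\inf_{\cH} f\bigr)$. The only cosmetic difference is that the paper invokes Lemma \ref{lem:lim-R} at this last step, whereas you reproduce its proof inline via the nonincreasing-plus-convergent-integral decomposition.
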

\begin{proof}
\begin{enumerate}
\item 
According to \eqref{HBF:b-lambda}, there exists $\eta_{0} > 0$ such that
\begin{equation*}
\sup\limits_{t \geq t_{0}} \dfrac{\dot{b} \left( t \right)}{b \left( t \right)} < \eta_{0} < \lambda .
\end{equation*}

It follows from \eqref{HBF:dE:inq} that for every $t \geq t_{0}$
\begin{equation}\label{eq: time derivative of energy functional is nonpositive, function case}
\dfrac{d}{dt} \cE_{\eta_{0}} \left( t \right) \leq \left( \sup\limits_{t \geq t_{0}} \dfrac{\dot{b} \left( t \right)}{b \left( t \right)} - \eta_{0} \right) b \left( t \right) \left( f \left( y \left( t \right) \right) - \inf\nolimits_{\cH} f \right) + \left( \eta_{0} - \lambda \right) \left\lVert \dot{y} \left( t \right) \right\rVert ^{2} .
\end{equation}
The statement follows upon integration of this inequality.

\item 
Let $0 \leq \eta \leq \lambda$. From \eqref{HBF:dE:inq} we derive that for every $t \geq t_{0}$
\begin{align*}
\dfrac{d}{dt} \cE_{\eta} \left( t \right) & \leq (\lambda-\eta)b \left( t \right) \left( f \left( y \left( t \right) \right) - \inf\nolimits_{\cH} f \right) .
\end{align*}
The first statement in  \ref{prop:HBF:int} ensures that the right hand side of this estimate belongs to $\sL^{1} (\left[ t_{0} , + \infty \right[; \R)$.
Hence, the conclusion follows from Lemma \ref{lem:lim-R}.
\end{enumerate}
\end{proof}

For the purpose of studying the convergence and rate of convergence of  \eqref{HBF:eq}, we introduce the following function $W \colon \left[ t_{0} , + \infty \right[ \to \R_{+}$ defined for every $t \geq t_{0}$ as
\begin{equation*}
W \left( t \right) := \left( f \left( y \left( t \right) \right) - \inf\nolimits_{\cH} f \right) + \dfrac{1}{2b \left( t \right)} \left\lVert \dot{y} \left( t \right) \right\rVert ^{2} .
\end{equation*}
The function is nonincreasing, which plays a crucial role in establishing convergence rates  (see \cite{BC}). Indeed, for every $t \geq t_{0}$ we have
\begin{equation}\label{eq: decreasing property of W}
    \dot{W} \left( t \right) = \left\langle \nabla f \left( y \left( t \right) \right) , \dot{y} \left( t \right) \right\rangle - \dfrac{\dot{b} \left( t \right)}{2b^{2} \left( t \right)} \left\lVert \dot{y} \left( t \right) \right\rVert ^{2} + \dfrac{1}{b \left( t \right)} \left\langle \dot{y} \left( t \right) , \ddot{y} \left( t \right) \right\rangle = - \dfrac{1}{b \left( t \right)} \left( \dfrac{\dot{b} \left( t \right)}{2b \left( t \right)} + \lambda \right) \left\lVert \dot{y} \left( t \right) \right\rVert ^{2} \leq 0 ,
\end{equation}
which holds due to the fact that $\dot{b} \left( t \right) \geq 0$ for every $t \geq t_{0}$. We are now ready for the main convergence results for \eqref{HBF:eq}.
\begin{theorem}
\label{thm:HBF}
Suppose further that $\lambda$ and $b(\cdot)$ satisfy assumption \eqref{HBF:b-lambda}. Let $y \colon \left[ t_{0} , + \infty \right[ \to \cH$ be a solution trajectory of \eqref{HBF:eq}. Then, the following statements are true: 
\begin{enumerate}
\item \label{thm:HBF:rate} (convergence rates)
It holds
\begin{equation*}
W \left( t \right) = o \left( \dfrac{1}{\int_{\frac{t+t_{0}}{2}}^{t} b \left( r \right) dr} \right) \quad \textrm{ as } t \to + \infty .
\end{equation*}
In particular,
\begin{equation*}
f \left( y \left( t \right) \right) - \inf\nolimits_{\cH} f = o \left( \dfrac{1}{\int_{\frac{t+t_{0}}{2}}^{t} b \left( r \right) dr} \right)
\quad \textrm{ and } \quad
\left\lVert \dot{y} \left( t \right) \right\rVert = o \left( \sqrt{\dfrac{b \left( t \right)}{\int_{\frac{t+t_{0}}{2}}^{t} b \left( r \right) dr}} \right) \quad \textrm{ as } t \to + \infty .
\end{equation*}

\item \label{thm:HBF:tra} (trajectory convergence)
The solution trajectory $y(t)$ converges weakly to an element of $\argmin f$ as $t\to +\infty$.
\end{enumerate}
\end{theorem}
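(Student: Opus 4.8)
\emph{Proof proposal.} For part \ref{thm:HBF:rate} the plan is to combine the monotonicity of $W$ with the integrability estimates. First observe that, writing $b(t)W(t) = b(t)\bigl(f(y(t)) - \inf\nolimits_{\cH} f\bigr) + \tfrac12\|\dot y(t)\|^2$, Proposition \ref{prop:HBF}\ref{prop:HBF:int} gives $\int_{t_0}^{+\infty} b(t)W(t)\,dt < +\infty$. Since $W$ is nonincreasing by \eqref{eq: decreasing property of W}, for $t > t_0$ and every $r \in [\tfrac{t+t_0}{2}, t]$ we have $W(t) \le W(r)$, whence
\[
W(t)\int_{\frac{t+t_0}{2}}^{t} b(r)\,dr \;\le\; \int_{\frac{t+t_0}{2}}^{t} b(r)W(r)\,dr \;\le\; \int_{\frac{t+t_0}{2}}^{+\infty} b(r)W(r)\,dr ,
\]
and the right-hand side, being the tail of a convergent integral, tends to $0$ as $t \to +\infty$. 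This yields the rate for $W$, and the two "in particular" claims follow immediately from $0 \le f(y(t)) - \inf\nolimits_{\cH} f \le W(t)$ and $\|\dot y(t)\|^2 \le 2b(t)W(t)$.

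For part \ref{thm:HBF:tra} I would apply the continuous-time Opial lemma, whose two hypotheses I verify as follows. \emph{(a) Existence of $\lim_{t\to+\infty}\|y(t) - x_*\|$ for each $x_* \in \argmin f$.} Fix such an $x_*$ and set $h(t) := \tfrac12\|y(t) - x_*\|^2$, so that $\dot h(t) = \langle y(t) - x_*, \dot y(t)\rangle$. Expanding $\|\eta(y(t)-x_*) + \dot y(t)\|^2$ in the definition of $\cE_\eta$, one obtains the identity $\cE_\eta(t) - \cE_0(t) = \eta\bigl(\dot h(t) + \lambda h(t)\bigr)$ for every $0 < \eta \le \lambda$; hence, by Proposition \ref{prop:HBF}\ref{prop:HBF:lim} applied with $\eta$ and with $0$, the limit $\ell := \lim_{t\to+\infty}\bigl(\dot h(t) + \lambda h(t)\bigr)$ exists and is finite. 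Viewing $\dot h + \lambda h = g$ as a linear first-order ODE and using the integrating factor $e^{\lambda t}$,
\[
h(t) = e^{-\lambda(t-t_0)} h(t_0) + e^{-\lambda t}\int_{t_0}^{t} e^{\lambda s} g(s)\,ds ,
\]
a routine splitting of the integral at a time beyond which $|g(s) - \ell|$ is as small as desired shows $\lim_{t\to+\infty} h(t) = \ell/\lambda$, hence $\lim_{t\to+\infty}\|y(t)-x_*\|$ exists.

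\emph{(b) Every weak sequential cluster point of $y(t)$ lies in $\argmin f$.} Because $b$ is nondecreasing and positive, $\int_{\frac{t+t_0}{2}}^{t} b(r)\,dr \ge b(t_0)\tfrac{t-t_0}{2} \to +\infty$, so part \ref{thm:HBF:rate} already forces $f(y(t)) \to \inf\nolimits_{\cH} f$. If $y(t_n) \rightharpoonup \bar y$ along some $t_n \to +\infty$, weak lower semicontinuity of the convex continuous function $f$ gives $f(\bar y) \le \liminf_n f(y(t_n)) = \inf\nolimits_{\cH} f$, i.e. $\bar y \in \argmin f$. Since the trajectory is bounded (by (a), or by the existence theorem), Opial's lemma yields the weak convergence of $y(t)$ to a point of $\argmin f$.

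The only genuinely delicate point is step (a): the Lyapunov functions $\cE_\eta$ only pin down the combination $\dot h + \lambda h$, so one must run the integrating-factor argument (or quote a dedicated auxiliary lemma) to recover convergence of $h$ itself; everything else is routine bookkeeping built on the monotonicity of $W$ and the estimates of Proposition \ref{prop:HBF}.
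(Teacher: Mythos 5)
Your proposal is correct and follows essentially the same route as the paper: monotonicity of $W$ combined with the integrability of $b(\cdot)W(\cdot)$ for the rates, and differences of the energies $\cE_{\eta}$ to pin down $\lambda q + \dot q$ (with $q(t)=\tfrac12\|y(t)-x_*\|^2$) before applying Opial's lemma. The only cosmetic deviations are that you compare $\cE_{\eta}$ with $\cE_{0}$ rather than with $\cE_{\eta_1}$ for two positive parameters, and that you re-prove the auxiliary step via the integrating factor $e^{\lambda t}$ instead of quoting Lemma \ref{lem:q} (with $r=0$), which is exactly what the paper invokes.
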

\begin{proof}
\begin{enumerate}
\item 
From Proposition \ref{prop:HBF}, we infer that
\begin{equation*}
\int_{t_{0}}^{+ \infty} b \left( t \right) W \left( t \right) dt < + \infty .
\end{equation*}
In particular, for $\psi \colon \left[ t_{0} , + \infty \right[ \to \R_{+}$ defined as
\begin{equation*}
\psi \left( t \right) := \int_{t_{0}}^{t} b \left( r \right) W \left( r \right) dr ,
\end{equation*}
the limit $\lim_{t \to + \infty} \psi \left( t \right) \in \R$ exists.	
Hence, for every $t \geq t_{0}$ and $r\in \left[ \frac{t_{0} + t}{2}, t\right]$ we have, according to the decreasing property of $W(\cdot)$, that $W(t) \leq W(r)$ and thus
\begin{align*}
0 \leq W \left( t \right) \int_{\frac{t+t_{0}}{2}}^{t} b \left( r \right) dr \leq \int_{\frac{t+t_{0}}{2}}^{t} b \left( r \right) W \left( r \right) dr = \psi \left( t \right) - \psi \left( \dfrac{t+t_{0}}{2} \right) \to 0 
\quad \textrm{ as } t \to + \infty,
\end{align*}
which {gives the desired small $o$ rate for $W(\cdot)$}.

\item 
Let $0 < \eta_{1} < \eta_{2} < \lambda$ and $x_* \in \argmin f$.
We have for every $t \geq t_{0}$
\begin{align*}
\cE_{\eta_{2}} \left( t \right) - \cE_{\eta_{1}} \left( t \right) & = \dfrac{1}{2} \left( \eta_{2} - \eta_{1} \right) \lambda \left\lVert y \left( t \right) - x_{*} \right\rVert ^{2} + \left( \eta_{2} - \eta_{1} \right) \left\langle y \left( t \right) - x_{*} , \dot{y} \left( t \right) \right\rangle \nonumber \\
& = \left( \eta_{2} - \eta_{1} \right) \left( \dfrac{1}{2} \lambda \left\lVert y \left( t \right) - x_{*} \right\rVert ^{2} + \dfrac{d}{dt} \left( \dfrac{1}{2} \left\lVert y \left( t \right) - x_{*} \right\rVert ^{2} \right) \right) .
\end{align*}
Proposition \ref{prop:HBF} \ref{prop:HBF:lim} guarantees that $\lim_{t \to + \infty} \left( \cE_{\eta_{2}} \left( t \right) - \cE_{\eta_{1}} \left( t \right) \right) \in \R$ exists. If we set $q(t) := \frac{1}{2} \| y(t) - x_{*}\|^{2}$, Lemma \ref{lem:q} ensures that $\lim_{t \to + \infty} \frac{1}{2} \left\lVert y \left( t \right) - x_{*} \right\rVert ^{2}$ exists and is a real number. This shows that the first condition of the Opial Lemma (see Lemma \ref{Opial}) is verified. Moreover, the second condition is verified due to the weak lower semicontinuity of $f$ and the convergence of $f(y(t))$ to $\inf_{\cH} f$ as $t \to +\infty$, as established in (i). 
\end{enumerate}	
\end{proof}

Let us illustrate the preceding results with two specific choices. For $\kappa >0$ and $\rho \geq 0$, consider 
$$b \left( t \right) := {\kappa} \exp \left({\rho} t \right) \quad \mbox{and} \quad b \left( t \right) := {\kappa} t^{\rho},$$
respectively. Note that in both cases the classical Heavy Ball Method with friction is recovered by setting $(\kappa, \rho) = (1, 0)$, yielding a convergence rate of $o(1/t)$ for the function value along the solution trajectory.  This improves the $\mathcal{O} \left( 1/t \right)$ convergence rate results derived in \cite{Sun}.  For these choices of $b(\cdot)$ we have for $t\geq t_{0}$ sufficiently large
\begin{align*}
    \int_{\frac{t + t_{0}}{2}}^{t} b(r) dr &= \frac{\kappa}{\rho} \left[\exp(\rho t) - \exp\left( \frac{\rho (t + t_{0})}{2}\right)\right] = \frac{\kappa}{\rho} \exp(\rho t) \left[ 1 - \exp\left( \frac{\rho (t_{0} - t)}{2}\right)\right] \geq \frac{\kappa}{2 \rho} \exp(\rho t) \\
\intertext{and}
    \int_{\frac{t + t_{0}}{2}}^{t} b(r) dt &= \frac{\kappa}{\rho + 1} \left[t^{\rho + 1} - \left( \frac{t + t_{0}}{2}\right)^{\rho + 1}\right] = \frac{\kappa}{\rho + 1} t^{\rho + 1} \left[ 1 - \left(\frac{t + t_{0}}{2t}\right)^{\rho + 1}\right] \geq \frac{\kappa}{4(\rho + 1)} t^{\rho + 1},
\end{align*}
respectively. These estimates lead to the following corollaries, respectively.

\begin{corollary}\label{coro:exp}
Let $y \colon \left[ t_{0} , + \infty \right[ \to \cH$ be a solution trajectory of
\begin{equation*}
\ddot{y} \left( t \right) + \lambda \dot{y} \left( t \right) + {\kappa} \exp \left({\rho} t \right) \nabla f \left( y \left( t \right) \right) = 0 \quad \mbox{for} \ t \geq t_0 \geq 0.
\end{equation*}
If $\lambda > \rho > 0$, then the following statements are true:
\begin{enumerate}[\rm (i)]
\item (convergence rates)
It holds
\begin{equation*}
    f \left( y \left( t \right) \right) - \inf\nolimits_{\cH} f = o \left( \frac{1}{\exp(\rho t)}\right)
    \quad \textrm{ and } \quad
    \left\lVert \dot{y} \left( t \right) \right\rVert \to 0
    \quad 
    \text{as $t\to +\infty$}.
\end{equation*}

\item (trajectory convergence)
The solution trajectory $y(t)$ converges weakly to an element of $\argmin f$ as $t\to +\infty$.
\end{enumerate}	
\end{corollary}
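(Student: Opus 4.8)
The plan is to obtain the corollary as a direct specialization of Theorem~\ref{thm:HBF} to the choice $b(t) = \kappa \exp(\rho t)$, so that almost all of the work consists in checking that the hypotheses of that theorem are met and in translating its generic rates into the explicit exponential ones.

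First I would verify that the pair $(\lambda, b)$ with $b(t) = \kappa\exp(\rho t)$ fits the standing framework of the section: $b$ is positive (since $\kappa > 0$), continuously differentiable, and nondecreasing (since $\rho > 0$); moreover $\dot b(t)/b(t) \equiv \rho$, so the growth condition \eqref{HBF:b-lambda}, namely $\sup_{t\ge t_0}\dot b(t)/b(t) < \lambda$, holds precisely because we assume $\lambda > \rho$. Hence Theorem~\ref{thm:HBF} is applicable to the given trajectory $y(\cdot)$.

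Next, for part (i), I would invoke the estimate already recorded just before the statement, $\int_{\frac{t+t_0}{2}}^{t} b(r)\,dr \ge \frac{\kappa}{2\rho}\exp(\rho t)$ for $t$ large, which gives $1\big/\int_{\frac{t+t_0}{2}}^{t} b(r)\,dr \le \frac{2\rho}{\kappa}\exp(-\rho t)$. Combining this with the bound $f(y(t)) - \inf\nolimits_{\cH} f = o\big(1/\int_{\frac{t+t_0}{2}}^{t} b(r)\,dr\big)$ from Theorem~\ref{thm:HBF}\,\ref{thm:HBF:rate} yields $f(y(t)) - \inf\nolimits_{\cH} f = o(\exp(-\rho t))$. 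For the velocity, Theorem~\ref{thm:HBF}\,\ref{thm:HBF:rate} gives $\|\dot y(t)\| = o\big(\sqrt{b(t)/\int_{\frac{t+t_0}{2}}^{t} b(r)\,dr}\big)$; since $b(t)\big/\int_{\frac{t+t_0}{2}}^{t} b(r)\,dr \le \kappa\exp(\rho t)\big/\big(\frac{\kappa}{2\rho}\exp(\rho t)\big) = 2\rho$ is bounded for $t$ large, the little-$o$ forces $\|\dot y(t)\| \to 0$ as $t\to +\infty$.

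Finally, part (ii) is nothing but Theorem~\ref{thm:HBF}\,\ref{thm:HBF:tra} applied verbatim. I do not expect a genuine obstacle here: the only mildly delicate point is that the lower bound on $\int_{\frac{t+t_0}{2}}^t b(r)\,dr$ is valid only for $t$ sufficiently large, but this is harmless for an asymptotic statement involving $o(\cdot)$ and limits, and the relevant computation has in any case already been carried out in the text preceding the corollary.
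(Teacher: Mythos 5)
Your proposal is correct and follows exactly the paper's route: the corollary is obtained by verifying that $b(t)=\kappa\exp(\rho t)$ satisfies \eqref{HBF:b-lambda} (since $\dot b/b\equiv\rho<\lambda$) and then specializing Theorem \ref{thm:HBF} using the lower bound $\int_{\frac{t+t_0}{2}}^{t} b(r)\,dr \geq \frac{\kappa}{2\rho}\exp(\rho t)$ computed in the text just before the statement. No gaps; the handling of the velocity via boundedness of $b(t)\big/\int_{\frac{t+t_0}{2}}^{t} b(r)\,dr$ matches the paper's conclusion $\lVert\dot y(t)\rVert\to 0$.
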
	

\begin{corollary}
Let $y \colon \left[ t_{0} , + \infty \right[ \to \cH$ be a solution trajectory of
\begin{equation*}
\ddot{y} \left( t \right) + \lambda \dot{y} \left( t \right) + \kappa t^{\rho} \nabla f \left( y \left( t \right) \right) = 0 \quad \mbox{for} \ t \geq t_0 > 0.
\end{equation*}
If $\lambda > \dfrac{\rho}{t_0} \geq 0$, then the following statements are true:
\begin{enumerate}
\item (convergence rates)
It holds
\begin{equation*}
f \left( y \left( t \right) \right) - \inf\nolimits_{\cH} f = o \left( \dfrac{1}{t^{\rho+1}} \right)
\quad \textrm{ and } \quad
\left\lVert \dot{y} \left( t \right) \right\rVert = o \left( \dfrac{1}{\sqrt{t}} \right) \qquad \textrm{ as } t \to + \infty .
\end{equation*}

\item (trajectory convergence)
The solution trajectory $y(t)$ converges weakly to an element of $\argmin f$ as $t\to +\infty$.
\end{enumerate}	
\end{corollary}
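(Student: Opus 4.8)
The plan is to obtain this corollary as a direct specialization of Theorem~\ref{thm:HBF} to the time-scaling function $b(t) := \kappa t^{\rho}$, so the whole argument reduces to (a) checking the standing hypotheses, and (b) inserting the elementary integral estimate recorded just before the statement.

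First I would verify that $b(\cdot)$ and $\lambda$ satisfy the assumptions under which Theorem~\ref{thm:HBF} applies. Since $\kappa>0$ and $\rho\geq 0$, the function $b(t)=\kappa t^{\rho}$ is continuously differentiable and nondecreasing on $[t_0,+\infty[$ (with $t_0>0$). Moreover $\dot b(t)/b(t)=\rho/t$, which is nonincreasing in $t$ on $[t_0,+\infty[$; hence $\sup_{t\geq t_0}\dot b(t)/b(t)=\rho/t_0$, and the hypothesis $\lambda>\rho/t_0$ is exactly the growth condition \eqref{HBF:b-lambda}. (When $\rho=0$ this is just $\lambda>0$ and $b\equiv\kappa$, recovering the classical Heavy Ball setting.)

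Second, I would invoke Theorem~\ref{thm:HBF}. Part~\ref{thm:HBF:rate} gives $W(t)=o\bigl(1/\int_{(t+t_0)/2}^{t}b(r)\,dr\bigr)$ as $t\to+\infty$. Using the bound already computed above the corollary, namely $\int_{(t+t_0)/2}^{t}\kappa r^{\rho}\,dr\geq \tfrac{\kappa}{4(\rho+1)}t^{\rho+1}$ for $t$ large enough, and the definition of $W(\cdot)$, we get $f(y(t))-\inf_{\cH}f\leq W(t)=o(1/t^{\rho+1})$. For the velocity, Theorem~\ref{thm:HBF}\ref{thm:HBF:rate} yields $\|\dot y(t)\|=o\bigl(\sqrt{b(t)/\int_{(t+t_0)/2}^{t}b(r)\,dr}\bigr)$; substituting $b(t)=\kappa t^{\rho}$ and the same integral lower bound gives $b(t)/\int_{(t+t_0)/2}^{t}b(r)\,dr\leq 4(\rho+1)/t$, hence $\|\dot y(t)\|=o(1/\sqrt t)$. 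Finally, the weak convergence of $y(t)$ to an element of $\argmin f$ is exactly Theorem~\ref{thm:HBF}\ref{thm:HBF:tra}.

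There is essentially no obstacle: the only step requiring a (one-line) argument is that $t\mapsto \rho/t$ attains its supremum over $[t_0,+\infty[$ at $t=t_0$, which turns $\lambda>\rho/t_0$ into \eqref{HBF:b-lambda}; everything else is the already-computed integral estimate plus a direct appeal to Theorem~\ref{thm:HBF}. The only mild care needed is to keep the requirement $t_0>0$ (so that $\rho/t_0$ is meaningful and $b$ is smooth at $t_0$), consistent with the hypotheses as stated.
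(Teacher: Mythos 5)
Your proposal is correct and follows exactly the paper's route: the corollary is obtained by specializing Theorem \ref{thm:HBF} to $b(t)=\kappa t^{\rho}$, noting that $\sup_{t\geq t_0}\dot b(t)/b(t)=\rho/t_0<\lambda$ gives \eqref{HBF:b-lambda}, and then inserting the integral lower bound $\int_{(t+t_0)/2}^{t}\kappa r^{\rho}\,dr\geq \tfrac{\kappa}{4(\rho+1)}t^{\rho+1}$ (valid for $t$ large) computed just before the statement. Nothing is missing.
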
	

\begin{remark}
Previous statements show that one can get faster rates from the Heavy Ball Method (even linearly, as in Collorary \ref{coro:exp}). Nevertheless, we emphasize that these dynamics are not only interesting in terms of their acceleration phenomenon but also because they have a connection with other dynamics in the literature, for {a} nonclassical choice of $b(\cdot)$. This was one of the main points made in the introduction, which we elaborate upon in the next section.
\end{remark}

\section{Connection with the dynamical system with asymptotic vanishing damping}\label{sec: connection between heavy ball and AVD, function case}

For $\alpha > 3$, we study the convergence behavior of the second order dynamical system with asymptotic vanishing damping
\begin{equation}
\label{AVD:eq}
\tag{\ensuremath{\mathrm{AVD}_{\alpha}}}
\ddot{x} \left( s \right) + \dfrac{\alpha}{s} \dot{x} \left( s \right) + \nabla f \left( x \left( s \right) \right) = 0 \quad \mbox{for} \quad s \geq s_0 >0.
\end{equation}
In particular, we will show that \eqref{AVD:eq} can be derived from \eqref{HBF:eq} via a time rescaling argument. This connection allows us to transfer the convergence results established for \eqref{HBF:eq} to \eqref{AVD:eq}.

\subsection{Two equivalent dynamical systems through time rescaling}
 
We start with a solution trajectory $y :[t_{0}, +\infty[ \to \mathcal{H}$ of
    \begin{equation}\label{eq: heavy ball system function, for the time rescaling argument}
        \ddot{y}(t) + \lambda \dot{y}(t) + b(t) \nabla f(y(t)) = 0, 
    \end{equation}
and define $x(s) := y(\tau(s))$, where $\tau :[s_{0}, +\infty[ \to [t_{0}, +\infty[$ is a continuously differentiable function such that $\dot{\tau}(s) > 0$ for every $s\geq s_{0} > 0$ and $\lim_{s \to + \infty} \tau(s) = + \infty$. We have 
    \[
        \dot{x}(s) = \dot{\tau}(s) \dot{y}(\tau(s)) \quad \text{and} \quad \ddot{x}(s) = \ddot{\tau}(s) \dot{y}(\tau(s)) + \bigl( \dot{\tau}(s)\bigr)^{2} \ddot{y}(\tau(s)).
    \]
These expressions lead to 
    \[
        \dot{y}(\tau(s)) = \frac{1}{\dot{\tau}(s)} \dot{x}(s) \quad \text{and} \quad \ddot{y}(\tau(s)) = \frac{1}{\bigl( \dot{\tau}(s)\bigr)^{2}} \Bigl[ \ddot{x}(s) - \ddot{\tau}(s) \dot{y}(\tau(s))\Bigr] = \frac{1}{\bigl( \dot{\tau}(s)\bigr)^{2}} \Bigl[ \ddot{x}(s) - \frac{\ddot{\tau}(s)}{ \dot{\tau}(s)} \dot{x}(s)\Bigr]. 
    \]
Now, plugging $t = \tau(s)$ in \eqref{eq: heavy ball system function, for the time rescaling argument} for $s \geq s_0$ gives
    \[
        \frac{1}{\bigl( \dot{\tau}(s)\bigr)^{2}} \left[ \ddot{x}(s) - \frac{\ddot{\tau}(s)}{\dot{\tau}(s)} \dot{x}(s)\right] + \frac{\lambda}{\dot{\tau}(s)} \dot{x}(s) + b(\tau(s)) \nabla f(x(s)) = 0
    \]
or, equivalently,
    \begin{equation}\label{eq: time rescaled heavy ball, function}
        \ddot{x}(s) + \left[ \lambda \dot{\tau}(s) - \frac{\ddot{\tau}(s)}{\dot{\tau}(s)}\right] \dot{x}(s) + \bigl( \dot{\tau}(s)\bigr)^{2} b(\tau(s)) \nabla f(x(s)) = 0. 
    \end{equation}
Recall that the Su-Boyd-Candès dynamics \cite{SBC} are given by 
    \[
        \ddot{x}(s) + \frac{\alpha}{s} \dot{x}(s) + \nabla f(x(s)) = 0. 
    \]
Going back to \eqref{eq: time rescaled heavy ball, function}, to match the asymptotic vanishing viscosity accompanying the velocity we need the following to hold
    \[
        \begin{cases}
            \displaystyle\lambda \dot{\tau}(s) - \frac{\ddot{\tau}(s)}{ \dot{\tau}(s)} &= \displaystyle\frac{\alpha}{s}, \\
            \tau(s_{0}) &= t_{0}. 
        \end{cases}
    \]
It is straightforward to check that 
    \[
        \tau(s) := \frac{\alpha - 1}{\lambda} \ln\left( \frac{s}{s_{0}}\right) + t_{0}
    \]
satisfies the previous differential equation. We have 
    \[
        \dot{\tau}(s) = \frac{\alpha - 1}{\lambda s} \quad \mbox{and} \quad \bigl( \dot{\tau}(s)\bigr)^{2} = \frac{(\alpha - 1)^{2}}{\lambda^{2} s^{2}}. 
    \]
We need, of course, to assume that $\alpha > 1$. Furthermore, we wish the coefficient attached to $\nabla f(x(s))$ to be $1$, i.e., 
    \[
        \bigl( \dot{\tau}(s)\bigr)^{2} b(\tau(s)) = 1 \quad \Leftrightarrow \quad b\left( \frac{\alpha - 1}{\lambda} \ln\left( \frac{s}{s_{0}}\right) + t_{0}\right) = \frac{\lambda^{2} s^{2}}{(\alpha - 1)^{2}}, 
    \]
which is fulfilled if we choose
    \[
        b(t) = \left(\frac{\lambda s_{0}}{\alpha - 1}\right)^{2} \exp\left( \frac{2\lambda (t  - t_{0})}{\alpha - 1}\right).
    \]
Furthermore, we need $b(\cdot)$ to satisfy \eqref{HBF:b-lambda}. Indeed, we have 
    \[
        \frac{\dot{b}(t)}{b(t)} = \frac{\left( \frac{2\lambda}{\alpha - 1}\right) \exp\left( \frac{2\lambda (t - t_{0})}{\alpha - 1}\right)}{\exp \left( \frac{2\lambda(t - t_{0})}{\alpha - 1}\right)} = \frac{2\lambda}{\alpha - 1} < \lambda \quad \Leftrightarrow \quad \alpha > 3. 
    \]
All in all, with these choices, the trajectory $t \mapsto x(t)$ fulfills 
    \[
        \ddot{x}(s) + \frac{\alpha}{s} \dot{x}(s) + \nabla f(x(s)) = 0. 
    \]
Conversely, if for $\alpha > 3$, $x : [s_{0}, +\infty[ \to \mathcal{H}$ is a solution to the previous system and we define $y(t) = x(\sigma(t))$, where $\sigma : [t_{0}, +\infty[ \to [s_{0}, +\infty[$ is a continuously differentiable function such that $\dot{\sigma}(t) > 0$ for all $t\geq t_{0} > 0$ and $\lim_{t \to + \infty} \sigma(t) = + \infty$, arguing in a similar fashion as it was done previously we arrive at 
    \[
        \ddot{y}(t) + \left[ \alpha \frac{\dot{\sigma}}{\sigma(t)} - \frac{\ddot{\sigma}(t)}{\dot{\sigma}(t)}\right] \dot{y}(t) + \bigl( \dot{\sigma}(t)\bigr)^{2} \nabla f(y(t)) = 0. 
    \]
We want the coefficient attached to $\dot{y}(t)$ to be $\lambda$, i.e., we want $\sigma(\cdot)$ to satisfy the differential equation 
    \[
        \begin{cases}
            \displaystyle\alpha\frac{\dot{\sigma}(t)}{\sigma(t)} - \frac{\ddot{\sigma}(t)}{\dot{\sigma}(t)} &= \lambda, \\
            \sigma(t_{0}) &= s_{0},
        \end{cases}
    \]
which is fulfilled by 
    \[
        \sigma(t) := s_{0} \exp\left( \frac{\lambda(t - t_{0})}{\alpha - 1}\right).
    \]
With this choice for $\sigma(\cdot)$, the resulting system reads 
    \[
        \ddot{y}(t) + \lambda \dot{y}(t) + s_{0}^{2} \exp\left( \frac{2\lambda(t - t_{0})}{\alpha - 1}\right) \nabla f(y(t)) = 0. 
    \]
We have thus showed the following statement.
    
    \begin{proposition}\label{prop: Heavy Ball and AVD connected, function}
        Assume that $\alpha > 3$, $\lambda > 0$ and that $s_0 > 0, t_0 \geq 0$ are initial times. Consider the following second-order systems: 
        \begin{equation}\label{eq: heavy ball and avd connected, function case, heavy ball}
            \begin{cases}
                \ddot{y}(t) + \lambda \dot{y}(t) + s_{0}^{2} \exp\left( \frac{2\lambda (t - t_{0})}{\alpha - 1}\right) \nabla f(y(t)) = 0, \\
                y(t_{0}) = y_{0}, \quad \dot{y}(t_{0}) = y_{1}, 
            \end{cases}
        \end{equation}
        and
        \begin{equation}\label{eq: heavy ball and avd connected, function case, avd}
            \begin{cases}
                \ddot{x}(s) + \frac{\alpha}{s} \dot{x}(s) + \nabla f(x(s)) = 0, \\
                x(s_{0}) = x_{0}, \quad \dot{x}(s_{0}) = x_{1}.
            \end{cases}
        \end{equation}
Then, the following statements are true:  

    \begin{enumerate}[\rm (i)]
        \item If $y : [t_{0}, +\infty) \to \mathcal{H}$ is a solution trajectory of \eqref{eq: heavy ball and avd connected, function case, heavy ball} and the function $\tau : [s_{0}, +\infty) \to [t_{0}, +\infty)$ is given by 
        \[
            \tau(s) := \frac{\alpha - 1}{\lambda} \ln\left( \frac{s}{s_{0}}\right) + t_{0},
        \]  
        then the reparametrized trajectory $x : [s_{0}, +\infty) \to \mathcal{H}$ given by $x(s) := y(\tau(s))$ is a solution of \eqref{eq: heavy ball and avd connected, function case, avd} for initial conditions 
        \[
            x(s_{0}) = y_{0} \quad \text{and} \quad \dot{x}(s_{0}) = \frac{\alpha - 1}{\lambda s_{0}} y_{1}.
        \]
        \item If $x : [s_{0}, +\infty) \to \mathcal{H}$ is a solution trajectory to \eqref{eq: heavy ball and avd connected, function case, avd} and the function $\sigma : [t_{0}, +\infty) \to [s_{0}, +\infty)$ is given by 
        \[
            \sigma(t) := s_{0} \exp\left( \frac{\lambda (t - t_{0})}{\alpha - 1}\right),
        \]
        then the reparametrized trajectory $y: [t_{0}, +\infty) \to \mathcal{H}$ given by $y(t) := x(\sigma(t))$ is a solution of \eqref{eq: heavy ball and avd connected, function case, heavy ball} for initial conditions 
        \[
            y(t_{0}) = x_{0} \quad \text{and} \quad \dot{y}(t_{0}) = \frac{\lambda s_{0}}{\alpha - 1} x_{1}.
        \]
    \end{enumerate}
\end{proposition}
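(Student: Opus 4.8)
The plan is to prove both directions by the single mechanism already assembled in the discussion preceding the statement, namely a direct chain-rule substitution; no appeal to the existence--uniqueness theorem is needed, only the regularity of a given solution trajectory together with the smoothness of the reparametrizing maps.

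\emph{Part (i).} First I would record that $\tau(s) = \frac{\alpha-1}{\lambda}\ln(s/s_0) + t_0$ is of class $C^\infty$ on $[s_0,+\infty)$, with $\dot\tau(s) = \frac{\alpha-1}{\lambda s} > 0$ (here $\alpha > 3 > 1$ is used), $\tau(s_0) = t_0$, and $\tau(s) \to +\infty$ as $s\to+\infty$; hence $\tau$ is a strictly increasing bijection of $[s_0,+\infty)$ onto $[t_0,+\infty)$, and $x(s) := y(\tau(s))$ is well defined and twice differentiable on all of $[s_0,+\infty)$. Differentiating twice via the chain rule and inverting gives $\dot y(\tau(s)) = \dot x(s)/\dot\tau(s)$ and $\ddot y(\tau(s)) = \frac{1}{(\dot\tau(s))^2}\bigl[\ddot x(s) - \frac{\ddot\tau(s)}{\dot\tau(s)}\dot x(s)\bigr]$. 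Plugging these into \eqref{eq: heavy ball and avd connected, function case, heavy ball} at $t = \tau(s)$ and multiplying through by $(\dot\tau(s))^2$ produces exactly \eqref{eq: time rescaled heavy ball, function}. It then remains to verify the two scalar identities $\lambda\dot\tau(s) - \ddot\tau(s)/\dot\tau(s) = \alpha/s$ (immediate from $\dot\tau(s) = \frac{\alpha-1}{\lambda s}$, $\ddot\tau(s) = -\frac{\alpha-1}{\lambda s^2}$) and $(\dot\tau(s))^2 b(\tau(s)) = 1$ (a short computation using the exponential form of the coefficient and $\exp(2\ln(s/s_0)) = (s/s_0)^2$), which collapse \eqref{eq: time rescaled heavy ball, function} to \eqref{eq: heavy ball and avd connected, function case, avd}. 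The Cauchy data then come for free: $x(s_0) = y(\tau(s_0)) = y(t_0) = y_0$ and $\dot x(s_0) = \dot\tau(s_0)\dot y(t_0) = \frac{\alpha-1}{\lambda s_0}y_1$.

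\emph{Part (ii).} This is the mirror argument with $\sigma(t) = s_0\exp\bigl(\frac{\lambda(t-t_0)}{\alpha-1}\bigr)$; I would first note that $\sigma = \tau^{-1}$ (a direct check), so the two reparametrizations are mutually inverse and the statement is internally consistent and composing them returns the original trajectory. One verifies $\sigma \in C^\infty$, $\dot\sigma(t) = \frac{\lambda}{\alpha-1}\sigma(t) > 0$, $\sigma(t_0) = s_0$, $\sigma(t)\to+\infty$, so $y(t) := x(\sigma(t))$ is well defined and twice differentiable; the same chain-rule substitution turns \eqref{eq: heavy ball and avd connected, function case, avd} into $\ddot y(t) + \bigl[\alpha\dot\sigma(t)/\sigma(t) - \ddot\sigma(t)/\dot\sigma(t)\bigr]\dot y(t) + (\dot\sigma(t))^2\nabla f(y(t)) = 0$. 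The identity $\alpha\dot\sigma(t)/\sigma(t) - \ddot\sigma(t)/\dot\sigma(t) = \lambda$ reduces the damping coefficient to $\lambda$, while $(\dot\sigma(t))^2$ is precisely the coefficient of $\nabla f$ in \eqref{eq: heavy ball and avd connected, function case, heavy ball}; the Cauchy data follow from $y(t_0) = x(\sigma(t_0)) = x_0$ and $\dot y(t_0) = \dot\sigma(t_0)\dot x(s_0) = \frac{\lambda s_0}{\alpha-1}x_1$.

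\emph{Main obstacle.} There is no genuine obstacle here: the proof is a routine verification. The only points demanding attention are (a) confirming that $\tau$ and $\sigma$ are honest strictly increasing $C^1$ bijections between the two time half-lines, so that the reparametrized trajectory is defined on the entire interval and inherits the $C^2$ regularity of the original; and (b) bookkeeping the multiplicative constants so that the coefficient of $\nabla f$ collapses to exactly the advertised value — it is here that the hypothesis $\alpha > 3$ quietly re-enters, through $\dot b(t)/b(t) = \frac{2\lambda}{\alpha-1} < \lambda$, which is also what makes the stated Heavy Ball system a legitimate instance of \eqref{HBF:eq} satisfying the growth condition \eqref{HBF:b-lambda}.
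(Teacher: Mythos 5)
Your proposal is correct and follows essentially the same route as the paper: the paper's ``proof'' is precisely the chain-rule substitution carried out in the text preceding the proposition, where $\tau$ and $\sigma$ are determined by matching the damping coefficient and the coefficient of $\nabla f$, with the converse direction obtained by the mirror computation. The only cosmetic difference is your closing remark that $\alpha>3$ is needed here; for the bare equivalence of the two ODEs only $\alpha>1$ is used, with $\alpha>3$ entering later (as you note) so that the resulting $b(\cdot)$ satisfies \eqref{HBF:b-lambda} and the convergence results can be transferred.
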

    
\subsection{Transferring the convergence results to the  \texorpdfstring{\eqref{AVD:eq}}{} framework}
    As a direct corollary of Theorem \ref{thm:HBF} and Proposition \ref{prop: Heavy Ball and AVD connected, function}, we obtain the following theorem.
    \begin{theorem}
        Let $\alpha > 3$ and $x :[s_{0}, +\infty[ \to \mathcal{H}$ be a solution trajectory of 
        \[
            \begin{cases}
                \ddot{x}(s) + \frac{\alpha}{s} \dot{x}(s) + \nabla f(x(s)) = 0, \\
                x(s_{0})=x_0, \quad \dot{x}(s_{0}) = x_{1}.
            \end{cases}
        \]
        Then, it holds
        \[
            f(x(s)) - {\inf\nolimits_{\mathcal{H}}f} = o\left( \frac{1}{s^{2}}\right) \quad \text{and} \quad \bigl\| \dot{x}(s)\bigr\| = o\left( \frac{1}{s}\right) 
      \qquad \mbox{as} \ s \to +\infty. \]
        Furthermore, $x(s)$ converges weakly to an element of $\argmin f$ as $s\to +\infty$. 
    \end{theorem}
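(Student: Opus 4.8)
The plan is to obtain the theorem exactly as the advertised corollary of Theorem~\ref{thm:HBF} and Proposition~\ref{prop: Heavy Ball and AVD connected, function}, the only genuine work being the translation of the scaling factor $\int_{\frac{t+t_0}{2}}^t b(r)\,dr$ into powers of $s$. First I would fix any $\lambda>0$ and set $b(t):=s_0^2\exp\!\left(\frac{2\lambda(t-t_0)}{\alpha-1}\right)$; this $b(\cdot)$ is positive, continuously differentiable and nondecreasing, and since $\dot b(t)/b(t)\equiv\frac{2\lambda}{\alpha-1}$, the growth condition \eqref{HBF:b-lambda} holds precisely because $\alpha>3$. By Proposition~\ref{prop: Heavy Ball and AVD connected, function}(ii), the reparametrized curve $y(t):=x(\sigma(t))$ with $\sigma(t):=s_0\exp\!\left(\frac{\lambda(t-t_0)}{\alpha-1}\right)$ is then a global solution of \eqref{HBF:eq} for this choice of $b(\cdot)$, so every conclusion of Theorem~\ref{thm:HBF} applies to $y$.

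Next I would evaluate the quantities appearing in Theorem~\ref{thm:HBF}. Observe that $b(t)=\sigma(t)^2$ and, writing $c:=\frac{2\lambda}{\alpha-1}$,
\[
\int_{\frac{t+t_0}{2}}^t b(r)\,dr=\frac{s_0^2}{c}\Bigl(e^{c(t-t_0)}-e^{c(t-t_0)/2}\Bigr)=\frac{\alpha-1}{2\lambda}\bigl(\sigma(t)^2-s_0\sigma(t)\bigr),
\]
which is asymptotically $\frac{\alpha-1}{2\lambda}\sigma(t)^2$ as $t\to+\infty$. Hence Theorem~\ref{thm:HBF}(i) gives $f(y(t))-\inf_{\cH}f=o\!\left(1/\sigma(t)^2\right)$, while for the velocity the ratio $b(t)\big/\int_{\frac{t+t_0}{2}}^t b(r)\,dr$ converges to the positive constant $\frac{2\lambda}{\alpha-1}$, so Theorem~\ref{thm:HBF}(i) only yields $\|\dot y(t)\|\to 0$. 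Theorem~\ref{thm:HBF}(ii) furnishes a weak limit $x_*\in\argmin f$ of $y(t)$.

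Finally I would transfer back to $x$. The map $\sigma$ is a strictly increasing bijection of $[t_0,+\infty)$ onto $[s_0,+\infty)$ with inverse $\tau(s)=\frac{\alpha-1}{\lambda}\ln(s/s_0)+t_0$, and $x(s)=y(\tau(s))$ with $\sigma(\tau(s))=s$; substituting $t=\tau(s)$ in the functional estimate gives $f(x(s))-\inf_{\cH}f=o(1/s^2)$. Differentiating the identity $x(s)=y(\tau(s))$ gives $\dot x(s)=\dot\tau(s)\,\dot y(\tau(s))=\frac{\alpha-1}{\lambda s}\,\dot y(\tau(s))$, and since $\tau(s)\to+\infty$ and $\|\dot y(t)\|\to 0$, the factor $\|\dot y(\tau(s))\|$ is $o(1)$, whence $\|\dot x(s)\|=o(1/s)$. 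Weak convergence of $x(s)$ to an element of $\argmin f$ follows from $x(s)=y(\tau(s))$, $\tau(s)\to+\infty$ and the weak convergence of $y(t)$ to $x_*\in\argmin f$.

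The step I expect to be the main obstacle is, honestly, none of real substance — the argument is bookkeeping once the correspondence of Proposition~\ref{prop: Heavy Ball and AVD connected, function} is available. The one point that deserves care is that Theorem~\ref{thm:HBF} here delivers merely $\|\dot y(t)\|\to 0$ rather than a rate, because the scaling ratio $b(t)\big/\int b$ tends to a positive constant; the sharper $o(1/s)$ decay of $\dot x$ is produced not by any rate on $\dot y$ but by the Jacobian $\dot\tau(s)=O(1/s)$ of the time change, so I would write that composition step out explicitly rather than treating it as automatic.
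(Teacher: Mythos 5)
Your proposal is correct and follows essentially the same route as the paper: apply Proposition \ref{prop: Heavy Ball and AVD connected, function}(ii) to view $y(t)=x(\sigma(t))$ as a solution of \eqref{HBF:eq} with $b(t)=s_0^2\exp\bigl(\tfrac{2\lambda(t-t_0)}{\alpha-1}\bigr)$ (condition \eqref{HBF:b-lambda} holding since $\alpha>3$), invoke Theorem \ref{thm:HBF}, and translate the integral scaling back via $\tau$, with the $o(1/s)$ decay of $\|\dot x(s)\|$ coming from the factor $\dot\tau(s)=\tfrac{\alpha-1}{\lambda s}$ rather than from any rate on $\|\dot y\|$ — exactly the point the paper's computation $\tfrac{\lambda s}{\alpha-1}\|\dot x(s)\|\to 0$ encodes. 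Your computation of $\int_{\frac{t+t_0}{2}}^{t}b(r)\,dr$ and $b(\tau(s))=s^2$ matches the paper's, so no gap remains.
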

    \begin{proof}
Let $\lambda >0$ and $s_0 > 0$. As per Proposition \ref{prop: Heavy Ball and AVD connected, function}, define $y(t) := x(\sigma(t))$. We know that $y : [t_{0}, +\infty) \to \mathcal{H}$ is a solution trajectory of \eqref{eq: heavy ball and avd connected, function case, heavy ball}. Taking into account the fact that $ \sigma\circ\tau : [s_{0}, +\infty) \to [s_{0}, +\infty)$ is the identity function, we have $x(s) = y(\tau(s))$, and therefore $\frac{\lambda s}{\alpha - 1} \dot{x}(s) = \dot{y}(\tau(s))$. Thus, according to Theorem \ref{thm:HBF}, we know that for $ b(t) = s_{0}^{2}\exp\left( \frac{2\lambda (t - t_{0})}{\alpha - 1}\right)$, we have 
            \[
                f(x(s)) - \inf\nolimits_{\mathcal{H}} f = o \left( \frac{1}{\int_{\frac{t_{0} + \tau(s)}{2}}^{\tau(s)}b(r) dr }\right) \:\:\: \text{and} \:\:\: \frac{\lambda s}{\alpha - 1}\bigl\| \dot{x}(s)\bigr\| = o\left( \sqrt{\frac{b(\tau(s))}{\int_{\frac{t_{0} + \tau(s)}{2}}^{\tau(s)} b(r)dr }}\right)\:\:\: \text{as} \:\:\: s\to +\infty.
            \]
Notice that for every $s \geq s_0$ it holds
            \begin{align*}
                \int_{\frac{t_{0} + \tau(s)}{2}}^{\tau(s)} b(r) dr &= s_{0}^{2}\int_{\frac{t_{0} + \tau(s)}{2}}^{\tau(s)} \exp\left( \frac{2\lambda (r - t_{0})}{\alpha - 1}\right) dr \\
                &= \frac{s_{0}^{2} (\alpha - 1)}{2\lambda} \left[ \exp\left( \frac{2\lambda (\tau(s) - t_{0})}{\alpha - 1}\right) - \exp\left( \frac{\lambda (\tau(s) - t_{0})}{\alpha - 1}\right)\right] \\
                &= \frac{s_{0}^{2} (\alpha - 1)}{2\lambda} \left[ \left(\frac{s}{s_{0}}\right)^{2} - \frac{s}{s_{0}}\right],
            \end{align*}
and $ b(\tau(s)) = s^{2}$. This gives $ f(x(s)) - \inf_{\mathcal{H}}f = o\left(\frac{1}{s^{2}}\right)$ and $\frac{\lambda s}{\alpha - 1} \bigl\| \dot{x}(s)\bigr\| \to 0$ as $s \to +\infty$, which verifies the rates we had claimed in the statement. Since $ y(t)$ converges weakly to a global minimizer of $f$ as $t\to +\infty$, so does $ x(s) = y(\tau(s))$ as $s\to +\infty$.  
\end{proof}

\section{Heavy Ball dynamics governed by a maximally monotone and continuous operator} \label{sec: Heavy ball system, operator}

In this section, we explore an analog to the Heavy Ball dynamics \eqref{HBF:eq}, but tailored to the solving of equations governed by maximal monotone operators. For a monotone and continuous operator $V : \mathcal{H} \to \mathcal{H}$, we consider the equation
\begin{equation}\label{eq: monotone equation}
\text{Find }y\in\mathcal{H}\text{ such that }V(y) = 0. 
\end{equation}
Attached to this equation, for $\lambda > 0$, $\mu : [t_{0}, +\infty[ \to \R_{++}$, a continuously differentiable and nondecreasing function, and $\gamma : [t_{0}, +\infty[ \to \R_{++}$ a continuous function, we study the asymptotic properties of the solutions to 
\begin{equation}\label{eq: heavy ball system operator}
\ddot{y}(t) + \lambda \dot{y}(t) + \mu(t) \frac{d}{dt}V(y(t)) + \gamma(t) V(y(t)) = 0 \quad \mbox{for} \ t\geq t_{0} > 0.
\end{equation}
When $V$ is not cocoercive, which is the case for example when it does not arise from a convex potential, the presence of the ``Hessian'' term $\frac{d}{dt}V(y(t))$ is necessary to obtain convergence rates for the residual and weak convergence results for the trajectory solution.

This section is organized as follows: in Subsection \ref{subsec: existence and uniqueness of solutions}, we address the existence and uniqueness of strong global solutions to our system. In Subsection \ref{subsec: convergence statements operator case}, we analyze the asymptotic properties of the global solutions to \eqref{eq: heavy ball system operator}. 

\subsection{Existence and uniqueness of {strong} global solutions}\label{subsec: existence and uniqueness of solutions}

We employ similar arguments to those used for the existence and uniqueness result for \eqref{HBF:eq}. First of all, we may rewrite \eqref{eq: heavy ball system operator} as a first order system in two variables. Indeed, it is straightforward to check that 
\begin{equation}\label{eq: second order heavy ball operator with initial conditions}
\begin{cases}
\ddot{y}(t) + \lambda \dot{y}(t) + \mu(t) \frac{d}{dt}V(y(t)) + \gamma(t) V(y(t)) = 0, \\
y(t_{0}) = y_{0}, \quad \dot{y}(t_{0}) = y_{1}
\end{cases}
\end{equation}
is equivalent to 
\begin{equation*}
\begin{cases}
\dot{y}(t) &= u(t) - \lambda y(t) - \mu(t) V(y(t)), \\
\dot{u}(t) &= \bigl(\dot{\mu}(t) - \gamma(t)\bigr) V(y(t)), \\
y(t_{0}) &= y_{0}, \\
u(t_{0}) &= \lambda y_{0} + y_{1} + \mu(t_{0}) V(y_{0}),  
\end{cases}
\end{equation*}
where 
\[
u(t) := \lambda y(t) + \dot{y}(t) + \mu(t) V(y(t)),
\]
or 
\begin{equation}\label{eq: operator system written as first order}
\begin{cases}
\bigl(\dot{y}(t), \dot{u}(t)\bigr) &= G(t, y(t), u(t)), \\
(y(t_{0}), u(t_{0})) &= \bigl(y_{0}, \: \lambda y_{0} + y_{1} + \mu(t_{0})V(y_{0})\bigr), 
\end{cases}
\end{equation}
where $G \colon [t_{0}, +\infty) \times \mathcal{H} \times \mathcal{H} \to \mathcal{H} \times \mathcal{H}$ is given by 
\[
G(t, y, u) := \Bigl( u - \lambda y - \mu(t) V(y), \: \bigl( \dot{\mu}(t) - \gamma(t)\bigr) V(y) \Bigr).
\]

The existence and uniqueness of global solutions to \eqref{eq: operator system written as first order} require strong assumptions, such as the Fréchet differentiability of $V$. It is more fitting to consider the existence and uniqueness of strong global solutions, which only require that $V$ is Lipschitz continuous. We call $y : [t_{0}, +\infty) \to \mathcal{H}$ a strong global solution of \eqref{eq: second order heavy ball operator with initial conditions} if the following statements hold:
\begin{enumerate}[left=0pt]
\item[(i)] $y, \dot{y} : [t_{0}, +\infty) \to \mathcal{H}$ are locally absolutely continuous, that is, absolutely continuous on each interval $[t_{0}, t_{1}]$;
\item[(ii)] $\ddot{y}(t) + \lambda \dot{y}(t) + \mu(t) \frac{d}{dt} V(y(t)) + \gamma(t) V(y(t)) = 0$ for almost every $t\in [t_{0}, +\infty)$;
\item[(iii)] $y(t_{0}) = y_{0}$ and $\dot{y}(t_{0}) = y_{1}$. 
\end{enumerate}
\begin{theorem}
Suppose further that  $V : \mathcal{H} \to \mathcal{H}$ is $L$-Lipschitz continuous. Then, for each $\left( y_{0}, y_{1} \right) \in \mathcal{H} \times \mathcal{H}$, \eqref{eq: second order heavy ball operator with initial conditions} has a unique strong global solution $y \colon [t_{0}, +\infty[ \to \mathcal{H}$.
\end{theorem}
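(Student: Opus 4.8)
The plan is to mirror the existence argument already carried out for \eqref{HBF:eq}, but now working with strong (Carathéodory-type) solutions, since $V$ is only assumed Lipschitz continuous rather than $C^1$. First I would rewrite \eqref{eq: second order heavy ball operator with initial conditions} in the first-order form \eqref{eq: operator system written as first order}, which the excerpt has already done: a solution of the second-order system corresponds to a locally absolutely continuous pair $(y,u)$ solving $\bigl(\dot y(t),\dot u(t)\bigr)=G(t,y(t),u(t))$ with the stated initial data. The map $G(t,y,u)=\bigl(u-\lambda y-\mu(t)V(y),\,(\dot\mu(t)-\gamma(t))V(y)\bigr)$ is Carathéodory: for fixed $(y,u)$ it is measurable (indeed continuous, since $\mu$ is $C^1$ and $\gamma$ continuous) in $t$, and for a.e.\ fixed $t$ it is Lipschitz in $(y,u)$ on bounded sets, with a Lipschitz constant that is locally bounded in $t$ because $V$ is globally $L$-Lipschitz and $\mu,\dot\mu,\gamma$ are continuous hence bounded on compact time intervals. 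By the Picard–Lindelöf theorem for Carathéodory systems (e.g.\ \cite[Theorems 46.2 and 46.3]{SellYou}, exactly as invoked before), there is a unique maximal strong solution on some interval $[t_{0},T_{\max}[$, and either $T_{\max}=+\infty$ or $\|(y(t),u(t))\|\to+\infty$ as $t\uparrow T_{\max}$.

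The remaining task is to rule out finite-time blow-up. For the optimization system this was done via the decreasing energy $\cE_{\eta_0}$; here I would instead derive an a priori growth bound directly. On $[t_{0},T_{\max}[$ set $R(t):=\|y(t)-x_*\|+\|u(t)\|$ for a fixed zero $x_*$ of $V$ (or just $\|y(t)\|+\|u(t)\|$ if one prefers not to assume $\Zer V\neq\emptyset$ — but monotone Lipschitz operators need not have zeros in general, so it is cleaner to keep things in terms of $\|y\|$ and $\|u\|$). From the first-order system,
\[
\|\dot y(t)\|\le \|u(t)\|+\lambda\|y(t)\|+\mu(t)\|V(y(t))\|,\qquad \|\dot u(t)\|\le |\dot\mu(t)-\gamma(t)|\,\|V(y(t))\|,
\]
and $\|V(y(t))\|\le \|V(y(t))-V(y_0)\|+\|V(y_0)\|\le L\|y(t)-y_0\|+\|V(y_0)\|\le L\|y(t)\|+C_0$. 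Since $\mu$, $\dot\mu$, $\gamma$ are continuous, on any compact subinterval $[t_0,T]\subset[t_0,T_{\max}[$ they are bounded, so one gets $\tfrac{d}{dt}\bigl(\|y(t)\|+\|u(t)\|\bigr)\le a(T)\bigl(\|y(t)\|+\|u(t)\|\bigr)+c(T)$ for constants $a(T),c(T)$ depending only on $T$ and the data. Grönwall's inequality then yields that $\|y(t)\|+\|u(t)\|$ stays bounded on $[t_0,T]$ by a bound depending only on $T$, not on $T_{\max}$. Hence $\limsup_{t\uparrow T_{\max}}\|(y(t),u(t))\|<+\infty$ whenever $T_{\max}<+\infty$, contradicting the blow-up alternative; therefore $T_{\max}=+\infty$.

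Finally I would translate back: the locally absolutely continuous solution $(y,u)$ on $[t_0,+\infty[$ gives $\dot y(t)=u(t)-\lambda y(t)-\mu(t)V(y(t))$ a.e., and since the right-hand side is itself locally absolutely continuous (composition of Lipschitz $V$ with absolutely continuous $y$, times $C^1$ coefficients), $\dot y$ is locally absolutely continuous, so $y$ satisfies properties (i)--(iii) of a strong global solution; uniqueness is inherited from uniqueness in the first-order formulation. I do not expect a genuine obstacle here — the argument is routine — but the one point requiring a little care is the Carathéodory/Lipschitz-on-bounded-sets verification for $G$ together with the fact that $\tfrac{d}{dt}V(y(t))$ appears only implicitly: it is the substitution $u=\lambda y+\dot y+\mu V(y)$ that removes the derivative of $V$ from the vector field, and one should check that $u$ so defined is indeed absolutely continuous, which follows because each summand is.
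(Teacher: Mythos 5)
Your argument is correct, but it takes a different route from the paper. The paper does not pass through a local-existence-plus-blow-up-alternative argument at all: since $V$ is \emph{globally} $L$-Lipschitz, the map $G(t,\cdot,\cdot)$ in \eqref{eq: operator system written as first order} is Lipschitz on all of $\mathcal{H}\times\mathcal{H}$ with a time-dependent constant $1+\lambda+L|\mu(t)|+L|\dot\mu(t)-\gamma(t)|$ that is locally integrable, and $t\mapsto G(t,y,u)$ is locally integrable for fixed $(y,u)$; the global Cauchy--Lipschitz--Picard theorem (\cite[Proposition 6.2.1]{Haraux}) then yields existence and uniqueness of a strong global solution in one step, with no a priori estimate needed. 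You instead mimic the proof given for \eqref{HBF:eq}: local existence via \cite{SellYou}, the maximal-solution blow-up alternative, and then a Gr\"onwall bound using the linear growth $\|V(y)\|\le L\|y\|+C_0$ to exclude finite-time blow-up. This is sound (your replacement of the energy argument by Gr\"onwall is exactly what the global Lipschitz hypothesis permits, and your remarks about not assuming $\Zer V\neq\emptyset$ and about recovering absolute continuity of $u=\lambda y+\dot y+\mu V(y)$ are apt), and it is somewhat more robust, since it would survive weakening the hypothesis to local Lipschitzness plus linear growth; the paper's route is shorter precisely because it exploits global Lipschitzness to invoke a global existence theorem directly. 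One small caveat: \cite[Theorems 46.2 and 46.3]{SellYou} concern classical solutions of continuous vector fields rather than Carath\'eodory systems; here $G$ is in fact jointly continuous, so this causes no real gap, but for the strong-solution framework the Haraux reference used by the paper is the more natural citation.
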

\begin{proof}
We use the first order reformulation \eqref{eq: operator system written as first order}, and show the existence of strong global solutions to this  differential equation in the Hilbert space $\mathcal{H} \times \mathcal{H}$, which we endow with the inner product $\left\langle (y, u), (\overline{y},\overline{u}) \right\rangle_{\mathcal{H} \times \mathcal{H}} := \left\langle y , \overline{y} \right\rangle + \left\langle u , \overline{u} \right\rangle$ and corresponding norm $\| (y, u)\|_{\mathcal{H} \times \mathcal{H}} := \sqrt{\| y\|^{2} + \| u\|^{2}}$. 

\begin{enumerate}[(a)]
\item First, we check the Lipschitz continuity of $G(t, \cdot, \cdot)$ for each $t\in [t_{0}, +\infty)$.  For $(y, u), (\overline{y},\overline{u}) \in\mathcal{H} \times \mathcal{H}$,  we have
\begin{align*}
& \:\bigl\| G(t, y, u) - G(t, \overline{y}, \overline{u})\bigr\|_{\mathcal{H} \times\mathcal{H}} \\
\leq &\: \Bigl\| (u - \overline{u}) - \lambda (y - \overline{y}) - \mu(t) (V(y) - V(\overline{y}))\Bigr\| + \Bigl\| (\dot{\mu}(t) - \gamma(t)) (V(y) - V(\overline{y}))\Bigr\| \\
\leq &\: \| u - \overline{u}\| + \lambda \| y - \overline{y}\| + L| \mu(t)|\: \| y - \overline{y}\| + L\bigl| \dot{\mu}(t) - \gamma(t)\bigr| \|y - \overline{y}\| \\
\leq &\: \Bigl( 1 + \lambda + L| \mu(t)| + L\bigl| \dot{\mu}(t) - \gamma(t)\bigr| \Bigr) \bigl\| (y, u) - (\overline{y}, \overline{u})\bigr\|_{\mathcal{H} \times \mathcal{H}}.
\end{align*}
By hypothesis, the (dependent on $t$) Lipschitz constant attached to $\| (y, u) - (\overline{y}, \overline{u})\|_{\mathcal{H} \times \mathcal{H}}$ is locally integrable. 
\item Now, we prove the local integrability of $G(\cdot, y, u)$ for each $y, u\in \mathcal{H}$. We have 
\begin{align*}
\int_{t_{0}}^{t_{1}} \| G(s, y, u)\|_{\mathcal{H} \times \mathcal{H}} ds & \leq \int_{t_{0}}^{t_{1}} \Bigl(\Bigl\| u -\lambda y - \mu(s) V(y)\Bigr\| + \Bigl\| \bigl( \dot{\mu}(s) - \gamma(s)\bigr) V(y)\Bigr\|\Bigr) ds \\
&\leq \int_{t_{0}}^{t_{1}} \Bigl( \| u\| + \lambda \| y\| + | \mu(s)| \: \| V(y)\| + \left( \bigl| \dot{\mu}(s) \bigr| + \bigl| \gamma(s)\bigr| \right) \| V(y)\|\Bigr) ds,
\end{align*}
and by hypothesis the right-hand side is finite. 
\end{enumerate}

With these two conditions verified, we invoke the Cauchy-Lipschitz-Picard theorem (see, for example, \cite[Proposition 6.2.1]{Haraux}) to ensure the existence and uniqueness of a strong global solution to \eqref{eq: operator system written as first order}. By using the equivalence between the first- and second-order formulations, the statement follows. 
\end{proof}

\subsection{Convergence rates and weak convergence of the trajectories}\label{subsec: convergence statements operator case}

In this subsection,  we will assume that we have a global solution $y \colon [t_{0}, +\infty[ \to \mathcal{H}$ of \eqref{eq: heavy ball system operator}.

Before proceeding, we introduce the energy function that will help us in our analysis and show some estimates that we will need later. Suppose ${x_{*}}$ is a zero of $V$, and $t\mapsto y(t)$ solves \eqref{eq: heavy ball system operator} for $t\geq t_{0}$. For $0 < \eta < \lambda$, we define
\begin{align}
\mathcal{E}_{\eta}(t) := & \: \frac{1}{2} \Bigl\| 2\eta (y(t) - x_*) + 2\dot{y}(t) + \mu(t) V(y(t))\Bigr\|^{2} \tag{\ensuremath{\mathcal{E}_{\eta}^{1}(t)}} \hypertarget{eq: HBF monotone energy function line 1}{\label{eq: HBF monotone energy function line 1}}\\
&+ 2\eta (\lambda - \eta) \| y(t) - x_*\|^{2} \tag{\ensuremath{\mathcal{E}_{\eta}^{2}(t)}} \hypertarget{eq: HBF monotone energy function line 2}{\label{eq: HBF monotone energy function line 2}}\\
&+ 2\eta \mu(t) \langle y(t) - x_*, V(y(t))\rangle \tag{\ensuremath{\mathcal{E}_{\eta}^{3}(t)}} \hypertarget{eq: HBF monotone energy function line 3}{\label{eq: HBF monotone energy function line 3}}\\
&+ \frac{1}{2} \mu^{2}(t) \| V(y(t))\|^{2} \tag{\ensuremath{\mathcal{E}_{\eta}^{4}(t)}}. \hypertarget{eq: HBF monotone energy function line 4}{\label{eq: HBF monotone energy function line 4}}
\end{align}
We now compute the time derivative of $\mathcal{E}_{\eta}(\cdot)$ at a point $t\geq t_{0}$
\begin{align*}
\frac{d}{dt}\hyperlink{eq: HBF monotone energy function line 1}{\mathcal{E}_{\eta}^{1}(t)} = &\: \left\langle 2\eta (y(t) - x_*) + 2\dot{y}(t) + \mu(t) V(y(t)),\: 2\eta \dot{y}(t) + 2\ddot{y}(t) + \dot{\mu}(t) V(y(t)) + \mu(t) \frac{d}{dt} V(y(t)) \right\rangle \\
= &\: \bigg\langle 2\eta (y(t) - x_*) + 2\dot{y}(t) + \mu(t) V(y(t)), \\
&\quad \quad \quad 2(\eta - \lambda) \dot{y}(t) + \Bigl[ \dot{\mu}(t) - 2\gamma(t)\Bigr] V(y(t)) - \mu(t) \frac{d}{dt} V(y(t)) \bigg\rangle \\
= &\: 4\eta(\eta - \lambda) \bigl\langle y(t) - x_*, \dot{y}(t)\bigr\rangle + 2\eta \Bigl[ \dot{\mu}(t) - 2\gamma(t)\Bigr] \langle y(t) - x_*, V(y(t))\rangle\\
& - 2\eta\mu(t) \left\langle y(t) - x_*, \frac{d}{dt}V(y(t))\right\rangle + 4(\eta - \lambda) \bigl\| \dot{y}(t)\bigr\|^{2}\\
& + \Bigl\{ 2\Bigl[ \dot{\mu}(t) - 2\gamma(t)\Bigr] + 2 (\eta - \lambda)\mu(t)\Bigr\} \bigl\langle \dot{y}(t), V(y(t))\bigr\rangle - 2\mu(t) \left\langle \dot{y}(t), \frac{d}{dt}V(y(t))\right\rangle \\
&+ \mu(t) \Bigl[ \dot{\mu}(t) - 2\gamma(t)\Bigr] \| V(y(t))\|^{2} - \mu^{2}(t) \left\langle V(y(t)), \frac{d}{dt}V(y(t))\right\rangle, \\
\frac{d}{dt}\hyperlink{eq: HBF monotone energy function line 2}{\mathcal{E}_{\eta}^{2}(t)} = &\: 4 \eta (\lambda - \eta) \bigl\langle y(t) - x_*, \dot{y}(t)\bigr\rangle, \\
\frac{d}{dt}\hyperlink{eq: HBF monotone energy function line 3}{\mathcal{E}_{\eta}^{3}(t)} = &\: 2\eta \dot{\mu}(t) \langle y(t) - x_*, V(y(t))\rangle + 2\eta\mu(t) \bigl\langle \dot{y}(t), V(y(t))\bigr\rangle + 2\eta\mu(t) \left\langle y(t) - x_*, \frac{d}{dt}V(y(t))\right\rangle, \\
\frac{d}{dt}\hyperlink{eq: HBF monotone energy function line 4}{\mathcal{E}_{\eta}^{4}(t)} = &\: \mu(t) \dot{\mu}(t) \| V(y(t))\|^{2} + \mu^{2}(t) \left\langle V(y(t)), \frac{d}{dt}V(y(t))\right\rangle. 
\end{align*}
Putting everything together yields 
\begin{align}
\frac{d}{dt}\mathcal{E}_{\eta}(t) = &\:\frac{d}{dt}\hyperlink{eq: HBF monotone energy function line 1}{\mathcal{E}_{\eta}^{1}(t)} + \frac{d}{dt}\hyperlink{eq: HBF monotone energy function line 2}{\mathcal{E}_{\eta}^{2}(t)} + \frac{d}{dt}\hyperlink{eq: HBF monotone energy function line 3}{\mathcal{E}_{\eta}^{3}(t)} + \frac{d}{dt}\hyperlink{eq: HBF monotone energy function line 4}{\mathcal{E}_{\eta}^{4}(t)} \nonumber \\
= &\: \Bigl\{ 2\eta \Bigl[ \dot{\mu}(t) - 2\gamma(t)\Bigr] + 2\eta \dot{\mu}(t)\Bigr\}\langle y(t) - x_*, V(y(t))\rangle + 4(\eta - \lambda) \bigl\| \dot{y}(t)\bigr\|^{2} \nonumber \\
&+ \Bigl\{ 2\Bigl[ \dot{\mu}(t) - 2\gamma(t)\Bigr] + 2 (\eta - \lambda)\mu(t) + 2\eta\mu(t)\Bigr\} \bigl\langle \dot{y}(t), V(y(t))\bigr\rangle - 2\mu(t) \left\langle \dot{y}(t), \frac{d}{dt}V(y(t))\right\rangle \nonumber\\
&+ \Bigl\{ \mu(t) \Bigl[ \dot{\mu}(t) - 2\gamma(t)\Bigr] + \mu(t) \dot{\mu}(t)\Bigr\} \| V(y(t))\|^{2} \nonumber\\
= &\: 4\eta \Bigl[ \dot{\mu}(t) - \gamma(t)\Bigr]\langle y(t) - x_*, V(y(t))\rangle \label{eq: HBF monotone equation derivative line 1} \\
&+ 4(\eta - \lambda) \bigl\| \dot{y}(t)\bigr\|^{2} \label{eq: HBF monotone equation derivative line 2}\\
&+  2\Bigl\{ \Bigl[ 2(\eta - \lambda)\mu(t) + \lambda\mu(t) - 2\gamma(t)\Bigr] + \dot{\mu}(t)\Bigr\} \bigl\langle \dot{y}(t), V(y(t))\bigr\rangle \label{eq: HBF monotone equation derivative line 3}\\
& -2\mu(t) \left\langle \dot{y}(t), \frac{d}{dt}V(y(t))\right\rangle \label{eq: HBF monotone equation derivative line 4}\\
& + 2\mu(t) \Bigl[ \dot{\mu}(t) - \gamma(t)\Bigr] \| V(y(t))\|^{2}. \label{eq: HBF monotone equation derivative line 5}
\end{align}
Define 
\[
\varepsilon := \lambda - \eta > 0.
\]
Working with \eqref{eq: HBF monotone equation derivative line 2}, \eqref{eq: HBF monotone equation derivative line 3} and \eqref{eq: HBF monotone equation derivative line 5}, we will want to know the sign of 
\begin{equation}\label{eq: quad inequality HBF monotone}
-3 \varepsilon \bigl\| \dot{y}(t)\bigr\|^{2} + 2 \Bigl\{\Bigl[-2\varepsilon \mu(t) + \lambda \mu(t) - 2\gamma(t)\Bigr] + \dot{\mu(t)}\Bigr\} \bigl\langle \dot{y}(t), V(y(t))\bigr\rangle + \frac{4}{3}\mu(t) \Bigl[ \dot{\mu}(t) - \gamma(t)\Bigr] \| V(y(t))\|^{2}. 
\end{equation}
For that end, we make use of Lemma \ref{lem:quad} and set $X := \dot{y}(t)$, $Y := V(y(t))$ and $A$, $B$, $C$ chosen as follows: 
\[
A := -3\varepsilon, \quad B := \bigl( -2 \varepsilon \mu(t) + \lambda\mu(t) - 2\gamma(t)\bigr) + \dot{\mu}(t), \quad C := \frac{4}{3}\mu(t) \Bigl[ \dot{\mu}(t) - \gamma(t)\Bigr]. 
\]
We have 
\begin{align}
B^{2} - AC = &\:\Bigl[ \bigl(-2 \varepsilon \mu(t) + \lambda\mu(t) - 2\gamma(t)\bigr) + \dot{\mu}(t)\Bigr]^{2} + 3\cdot \frac{4}{3} \varepsilon \mu(t) \Bigl[ \dot{\mu}(t) - \gamma(t)\Bigr] \nonumber\\
= &\:\Bigl[ -2\varepsilon \mu(t) + \lambda\mu(t) - 2\gamma(t)\Bigr]^{2} - 4\varepsilon \mu(t)\dot{\mu}(t) + 2 \bigl(\lambda\mu(t) - 2\gamma(t)\bigr) \dot{\mu}(t) + \bigl(\dot{\mu}(t)\bigr)^{2} \nonumber\\
&+ 4\varepsilon \mu(t) \dot{\mu}(t) - 4\varepsilon \mu(t) \gamma(t) \nonumber\\
= &\: 4\varepsilon^{2} \mu^{2}(t) - 4\varepsilon \mu(t) \bigl( \lambda \mu(t) - 2\gamma(t)\bigr) + \bigl( \lambda\mu(t) - 2\gamma(t)\bigr)^{2} \! + 2\bigl(\lambda\mu(t) - 2\gamma(t)\bigr)\dot{\mu}(t) \nonumber \\ 
& \ + \bigl(\dot{\mu}(t)\bigr)^{2} - 4\varepsilon\mu(t)\gamma(t) \nonumber\\
= &\: \mu^{2}(t) \left\{ 4\varepsilon^{2} - 4\left(\lambda - \frac{\gamma(t)}{\mu(t)}\right)\varepsilon + \left[ \lambda - \frac{2\gamma(t)}{\mu(t)} + \frac{\dot{\mu}(t)}{\mu(t)}\right]^{2}\right\}. \label{eq: parabola}
\end{align}

\color{black}

We will be working under an assumption which will allow us to show that the parabola \eqref{eq: parabola} eventually becomes negative for every $\varepsilon$ in a certain $I \subseteq ]0, \lambda[$. \vspace{1ex}

\begin{mdframed}
Suppose that $\lambda > 0$, $\mu : [t_{0}, +\infty[ \to \R_{++}$ and $\gamma : [t_{0}, +\infty[ \to \R_{++}$ satisfy further
\begin{equation}
\label{HBF:b-lambda monotone}
\lim_{t\to +\infty} \frac{\gamma(t)}{\mu(t)} =: L > 0, \quad \sup_{t\geq t_{0}} \frac{\dot{\mu}(t)}{\gamma(t)} < 1 \quad \text{and} \quad 2\lambda - 3L + \inf_{t\geq t_{0}} \frac{\dot{\mu}(t)}{\mu(t)} > 0. 
\end{equation}
\end{mdframed}
\begin{remark}
Assumption \eqref{HBF:b-lambda monotone} implies in particular that $\lambda > L$, a fact that we will need later. Let us show this claim here. Since
\[
2\lambda > 3L - \inf_{t \geq t_{0}}\frac{\dot{\mu}(t)}{\mu(t)}, \ \text{there exists }\delta > 0\text{ such that} \ 2\lambda - \delta \geq 3L - \inf_{t\geq t_{0}}\frac{\dot{\mu}(t)}{\mu(t)}.
\]
Since $\lim_{t\to +\infty} \frac{\gamma(t)}{\mu(t)} = L$, for every  given $0 < \varepsilon < \delta$ there exists $t_{\varepsilon} \geq t_{0}$ such that for every $t\geq t_{\varepsilon}$
\[
L - \varepsilon \leq \frac{\gamma(t)}{\mu(t)} \leq L + \varepsilon, \ \text{and in particular,} \ \frac{\dot{\mu}(t)}{\mu(t)} \leq (L + \varepsilon) \frac{\dot{\mu}(t)}{\gamma(t)} \leq L + \varepsilon, 
\]
where in the last estimate we used the assumption $\sup_{t\geq t_{0}}\frac{\dot{\mu}(t)}{\gamma(t)} < 1$ and the nonincreasing property of $\mu$. Now, for every $t\geq t_{\varepsilon}$ we have 
\[
2\lambda - \delta \geq 3L - \inf_{t \geq t_{0}}\frac{\dot{\mu}(t)}{\mu(t)} \geq 3L - \frac{\dot{\mu}(t)}{\mu(t)} \geq 3L - L - \varepsilon = 2L - \varepsilon, 
\]
from which we deduce
\[
2\lambda > 2L + \delta - \varepsilon > 2L \ \text{and thus} \  \lambda > L. 
\]
\end{remark}

\begin{proposition}\label{prop:HBF-like}
Suppose further that $\lambda$, $\mu(\cdot)$ and $\gamma(\cdot)$ satisfy assumption \eqref{HBF:b-lambda monotone}. Let $x_*$ be a zero of $V$ and $y: [t_0,+\infty [ \rightarrow \cH$ be a solution trajectory of \eqref{eq: heavy ball system operator}. Then, the following statements are true:
\begin{enumerate}
\item \label{prop:HBF-like:lim} \emph{(energy functions convergence)}
The limit $\lim_{t \to + \infty} \cE_{\lambda - \varepsilon} \left( t \right) \in \R$ exists for every $\varepsilon$ in a given interval I $\subseteq ] 0 , \lambda [$.

\item \label{prop:HBF-like:int} \emph{(integrability results)}
It holds
\begin{align*}
\int_{{t_{0}}}^{+\infty} \mu(t) \langle y(t) - x_*, V(y(t))\rangle dt < +\infty, 
\int_{{t_{0}}}^{+\infty} \bigl\| \dot{y}(t)\bigr\|^{2} dt < +\infty, 
\int_{{t_{0}}}^{+\infty} \mu^{2}(t) \|V(y(t))\|^{2} dt < +\infty. 
\end{align*}
\end{enumerate}
\end{proposition}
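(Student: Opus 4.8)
The plan is to produce a nonempty closed interval $I \subseteq {]0,\lambda[}$ and a time $t_1 \geq t_0$ such that $\frac{d}{dt}\mathcal{E}_{\lambda-\varepsilon}(t) \leq 0$ for every $\varepsilon \in I$ and almost every $t \geq t_1$; from this both assertions follow cheaply, using that $\mathcal{E}_{\lambda-\varepsilon}(\cdot)$ is nonnegative. Starting from the expression for $\frac{d}{dt}\mathcal{E}_{\eta}(t)$ in \eqref{eq: HBF monotone equation derivative line 1}--\eqref{eq: HBF monotone equation derivative line 5} with $\eta = \lambda - \varepsilon \in {]0,\lambda[}$, I would first note that $t\mapsto V(y(t))$ is locally absolutely continuous and that monotonicity of $V$ yields $\langle \frac{d}{dt}V(y(t)),\dot y(t)\rangle \geq 0$ for a.e.\ $t$, so the term \eqref{eq: HBF monotone equation derivative line 4} is $\leq 0$ and can be discarded. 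Next I would split $-4\varepsilon\|\dot y(t)\|^2 = -3\varepsilon\|\dot y(t)\|^2 - \varepsilon\|\dot y(t)\|^2$ in \eqref{eq: HBF monotone equation derivative line 2} and $2\mu(t)[\dot\mu(t)-\gamma(t)]\|V(y(t))\|^2 = \tfrac43\mu(t)[\dot\mu(t)-\gamma(t)]\|V(y(t))\|^2 + \tfrac23\mu(t)[\dot\mu(t)-\gamma(t)]\|V(y(t))\|^2$ in \eqref{eq: HBF monotone equation derivative line 5}, so that the $-3\varepsilon\|\dot y\|^2$ piece, the $\tfrac43\mu[\dot\mu-\gamma]\|V(y)\|^2$ piece, and the cross term \eqref{eq: HBF monotone equation derivative line 3} assemble into exactly \eqref{eq: quad inequality HBF monotone}, whose sign Lemma \ref{lem:quad} ties to that of \eqref{eq: parabola}. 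The three leftover terms are pointwise nonpositive: $\dot\mu(t)-\gamma(t) < 0$ because $\sup_{t\geq t_0}\dot\mu(t)/\gamma(t) < 1$ and $\gamma(t) > 0$, which kills $\tfrac23\mu(t)[\dot\mu(t)-\gamma(t)]\|V(y(t))\|^2 \leq 0$ and, together with $\lambda - \varepsilon > 0$ and $\langle y(t)-x_*,V(y(t))\rangle = \langle y(t)-x_*,V(y(t))-V(x_*)\rangle \geq 0$, also makes $4(\lambda-\varepsilon)[\dot\mu(t)-\gamma(t)]\langle y(t)-x_*,V(y(t))\rangle \leq 0$.

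The crux is to check that \eqref{eq: parabola} is strictly negative for all large $t$ and all $\varepsilon$ in a fixed interval. Writing $r(t) := \gamma(t)/\mu(t)$ and $s(t) := \dot\mu(t)/\mu(t)$, the brace in \eqref{eq: parabola} is the convex parabola $\varepsilon \mapsto 4\varepsilon^2 - 4(\lambda - r(t))\varepsilon + (\lambda - 2r(t)+s(t))^2$, whose reduced discriminant factors as $(\lambda-r(t))^2 - (\lambda - 2r(t)+s(t))^2 = (r(t)-s(t))(2\lambda - 3r(t)+s(t))$. Assumption \eqref{HBF:b-lambda monotone} then feeds in directly: $r(t)\to L$; since $s(t) = r(t)\,\dot\mu(t)/\gamma(t) \leq r(t)\sup_{t\ge t_0}\dot\mu(t)/\gamma(t)$ we get $\limsup_{t\to+\infty} s(t) < L$, so $r(t)-s(t)$ is eventually bounded below by a positive constant; and $2\lambda - 3r(t)+s(t)$ is eventually above $2\lambda - 3L + \inf_{t\ge t_0}\dot\mu(t)/\mu(t) > 0$. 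Hence the reduced discriminant is eventually bounded below by a positive constant, the parabola has two real roots $\varepsilon_-(t) < \varepsilon_+(t)$ that stay uniformly separated, and rationalizing the numerator (using $(\lambda-r)^2 = (\lambda-2r+s)^2 + (r-s)(2\lambda-3r+s)$) gives $\varepsilon_-(t) \geq 0$, while $\varepsilon_+(t) \leq \lambda - r(t) < \lambda$ for $t$ large (here one also uses $\lambda > L$ from the preceding remark, so that $\lambda - r(t) > 0$ eventually). Since $\limsup_t \varepsilon_-(t) < \liminf_t \varepsilon_+(t)$, there is a nonempty closed $I \subseteq {]0,\lambda[}$ and $t_1 \geq t_0$ with $\varepsilon_-(t) < \min I \leq \max I < \varepsilon_+(t)$ for all $t \geq t_1$; for such $t$ and $\varepsilon \in I$ the parabola, hence \eqref{eq: parabola}, is strictly negative, so Lemma \ref{lem:quad} gives \eqref{eq: quad inequality HBF monotone} $\leq 0$, and combining all of the above,
\[
\frac{d}{dt}\mathcal{E}_{\lambda-\varepsilon}(t) \leq 4(\lambda-\varepsilon)[\dot\mu(t)-\gamma(t)]\langle y(t)-x_*,V(y(t))\rangle - \varepsilon\|\dot y(t)\|^2 + \tfrac23\mu(t)[\dot\mu(t)-\gamma(t)]\|V(y(t))\|^2 \leq 0
\]
for a.e.\ $t \geq t_1$ and every $\varepsilon \in I$.

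From here everything is routine. The four summands defining $\mathcal{E}_{\lambda-\varepsilon}(\cdot)$ are each nonnegative --- for the third, once more because $\langle y(t)-x_*,V(y(t))\rangle \geq 0$ --- so $\mathcal{E}_{\lambda-\varepsilon}(\cdot) \geq 0$; being nonincreasing on $[t_1,+\infty)$ it therefore converges to a real number, which is \ref{prop:HBF-like:lim} (the behaviour on the compact $[t_0,t_1]$ is immaterial to the limit). For \ref{prop:HBF-like:int}, integrating the displayed inequality on $[t_1,T]$ and using $\mathcal{E}_{\lambda-\varepsilon}(T) \geq 0$ bounds, uniformly in $T$,
\[
\int_{t_1}^{T}\!\Bigl(4(\lambda-\varepsilon)[\gamma(t)-\dot\mu(t)]\langle y(t)-x_*,V(y(t))\rangle + \varepsilon\|\dot y(t)\|^2 + \tfrac23\mu(t)[\gamma(t)-\dot\mu(t)]\|V(y(t))\|^2\Bigr)dt \leq \mathcal{E}_{\lambda-\varepsilon}(t_1);
\]
all three integrands are nonnegative, so letting $T\to+\infty$ each has finite integral over $[t_1,+\infty)$. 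Finally, with $\theta := \sup_{t\ge t_0}\dot\mu(t)/\gamma(t) < 1$ we have $\gamma(t)-\dot\mu(t) \geq (1-\theta)\gamma(t)$, and since $\gamma(t)/\mu(t)\to L > 0$ there is $t_2 \geq t_1$ with $\gamma(t) \geq \tfrac{L}{2}\mu(t)$ for $t\ge t_2$; hence on $[t_2,+\infty)$ the integrands above dominate $\tfrac{(1-\theta)L}{2}\mu(t)\langle y(t)-x_*,V(y(t))\rangle$ and $\tfrac{(1-\theta)L}{2}\mu^2(t)\|V(y(t))\|^2$, and adding the finite integrals over $[t_0,t_2]$ (all integrands being continuous there) yields the three claimed bounds. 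The step I expect to be the main obstacle is the parabola analysis of the second paragraph: one must commit to a single interval $I$ valid for all large $t$, which forces working with $\limsup$ and $\liminf$ of $\dot\mu(\cdot)/\mu(\cdot)$ rather than assuming a limit, and one has to handle the degenerate situation $\lambda - 2r(t)+s(t) \to 0$, in which $\varepsilon_-(t)\to 0$ and one must still squeeze $I$ strictly inside $]0,\lambda[$; everything else (the monotonicity tricks, the nonnegativity of $\mathcal{E}_{\lambda-\varepsilon}$, and the final comparison estimates) is mechanical.
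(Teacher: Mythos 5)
Your proposal is correct and follows essentially the same route as the paper: the same splitting of $\frac{d}{dt}\mathcal{E}_{\lambda-\varepsilon}$ into the quadratic form \eqref{eq: quad inequality HBF monotone} plus pointwise nonpositive leftovers (dropping the Hessian-type term \eqref{eq: HBF monotone equation derivative line 4} by monotonicity), the same use of Lemma \ref{lem:quad} and of the parabola \eqref{eq: parabola} in $\varepsilon$, a fixed interval $I$ centred near $\tfrac{1}{2}(\lambda-L)$ straddled by the roots for large $t$, and the same integration of the resulting decay inequality for part \ref{prop:HBF-like:int}. The only difference is cosmetic: you obtain the uniform positivity of the reduced discriminant by factoring it as $\left(\frac{\gamma(t)}{\mu(t)}-\frac{\dot{\mu}(t)}{\mu(t)}\right)\left(2\lambda-\frac{3\gamma(t)}{\mu(t)}+\frac{\dot{\mu}(t)}{\mu(t)}\right)$ and bounding each factor below, whereas the paper bounds the two squared terms separately through a chain of auxiliary constants $\delta_{1},\delta_{2},\tilde{\delta}_{1},\dots$; both yield the same lower bound on $\Delta_{t}$ and the same interval construction.
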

\color{black}
\begin{proof}
\begin{enumerate}
\item 
We wish to choose $\varepsilon_{1}$ and $\varepsilon_{2}$, independent of time, such that the parabola \eqref{eq: parabola} becomes negative for large enough $t$. According to \eqref{HBF:b-lambda monotone}, we can choose $\delta_{1}, \delta_{2}$ such that 
\begin{equation}\label{eq: strict supremum}
\max \left\lbrace 2 - \frac{\lambda}{L} , \sup_{t\geq t_{0}} \frac{\dot{\mu}(t)}{\gamma(t)} \right\rbrace < \delta_{1} < \delta_{2} < 1 . 
\end{equation}
After some algebraic manipulation, we deduce that
\[
\lambda - \frac{2\gamma(t)}{\mu(t)} + \frac{\dot{\mu}(t)}{\mu(t)} < \lambda + (\delta_{1} - 2) \frac{\gamma(t)}{\mu(t)} < \lambda + (\delta_{2} - 2)\frac{\gamma(t)}{\mu(t)} < \lambda - \frac{\gamma(t)}{\mu(t)} \quad \forall t\geq t_{0}.
\]
Moreover, we know that there exist $\Tilde{\delta}_{2} > \Tilde{\delta}_{1} > 0$ satisfying
\begin{equation}
\label{eq: discriminant away from zero 1:a} 
0 < \lambda + (\delta_{1} - 2)L < \Tilde{\delta}_{1} < \Tilde{\delta}_{2} < \lambda + (\delta_{2} - 2)L .
\end{equation}
Using $\lim_{t\to +\infty} \frac{\gamma(t)}{\mu(t)} = L > 0$, we know that there exist $t_{1}\geq t_{0}$ such that
\begin{equation}
\label{eq: discriminant away from zero 1:b} 
\lambda - \frac{2\gamma(t)}{\mu(t)} + \frac{\dot{\mu}(t)}{\mu(t)} < \lambda + (\delta_{1} - 2) \frac{\gamma(t)}{\mu(t)} \leq \Tilde{\delta}_{1} < \Tilde{\delta}_{2} \leq \lambda + (\delta_{2} - 2)\frac{\gamma(t)}{\mu(t)} < \lambda - \frac{\gamma(t)}{\mu(t)} \quad \forall t\geq t_{1}. 
\end{equation}
On the other hand, according to \eqref{HBF:b-lambda monotone}, we know that 
\[
L - \lambda < \lambda - 2L + \inf_{t \geq t_{0}}\frac{\dot{\mu}(t)}{\mu(t)}.
\]
Similarly, we can construct $\Tilde{\delta}_{4} > \Tilde{\delta}_{3} > 0$ and $t_{2} \geq t_{1}$ such that  
\begin{gather}
L - \lambda < -\Tilde{\delta}_{4} < - \Tilde{\delta}_{3} < \lambda - 2L + \inf_{t \geq t_{0}}\frac{\dot{\mu}(t)}{\mu(t)} \quad \text{and} \nonumber\\
\frac{\gamma(t)}{\mu(t)} - \lambda \leq -\Tilde{\delta}_{4} < - \Tilde{\delta}_{3} \leq \lambda - \frac{2\gamma(t)}{\mu(t)} + \inf_{t \geq t_{0}}\frac{\dot{\mu}(t)}{\mu(t)} \leq \lambda - \frac{2\gamma(t)}{\mu(t)} + \frac{\dot{\mu}(t)}{\mu(t)} \quad \forall t\geq t_{2}. \label{eq: discriminant away from zero 2}
\end{gather}
Combining \eqref{eq: discriminant away from zero 1:b} and \eqref{eq: discriminant away from zero 2} yields 
\[
\frac{\gamma(t)}{\mu(t)} - \lambda \leq -\Tilde{\delta}_{4} < -\Tilde{\delta}_{3} \leq \lambda - \frac{2\gamma(t)}{\mu(t)} + \frac{\dot{\mu}(t)}{\mu(t)} \leq \Tilde{\delta}_{1} < \Tilde{\delta}_{2} \leq \lambda - \frac{\gamma(t)}{\mu(t)} \quad \forall t\geq t_{2}. 
\]
Without loss of generality, since both cases can be handed identically, we may assume that 
\[
\max \left\lbrace \Tilde{\delta}_{1}, \Tilde{\delta}_{3} \right\rbrace = \Tilde{\delta}_{1}.
\]
Therefore, the reduced discriminant of \eqref{eq: parabola} satisfies 
\begin{equation}\label{eq: Delta_t stays away from zero}
\Delta_{t} := 4 \left[ \left(\lambda - \frac{\gamma(t)}{\mu(t)}\right)^{2} - \left(\lambda - \frac{2\gamma(t)}{\mu(t)} + \frac{\dot{\mu}(t)}{\mu(t)}\right)^{2} \right] \geq 4 \left( \Tilde{\delta}_{2}^{2} - \Tilde{\delta}_{1}^{2} \right) > 0 \quad \forall t\geq t_{2}. 
\end{equation}
Now, choose $\delta_{3} > 0$ such that 
\[
\delta_{3} < \min \left\{ L, \: \lambda - L, \: \dfrac{1}{2} \sqrt{\Tilde{\delta}_{2}^{2} - \Tilde{\delta}_{1}^{2}} \right\}.
\]
Once again using the fact that $\lim_{t\to +\infty} \frac{\gamma(t)}{\mu(t)} = L$, there exists $t_{3} \geq t_{2}$ such that 
\begin{equation}\label{eq: gamma/mu stays close to L}
\lambda - L - \delta_{3} \leq \lambda - \frac{\gamma(t)}{\mu(t)} \leq \lambda - L + \delta_{3} \quad \forall t\geq t_{3}. 
\end{equation}
Going back to \eqref{eq: parabola}, the roots of the parabola are given, for each $t$, by 
\[
\underline{\varepsilon}_{t} := \frac{1}{2}\left( \lambda - \frac{\gamma(t)}{\mu(t)}\right) - \dfrac{1}{4} \sqrt{\Delta_{t}} \quad \text{and} \quad \overline{\varepsilon}_{t} := \frac{1}{2}\left( \lambda - \frac{\gamma(t)}{\mu(t)}\right) + \dfrac{1}{4} \sqrt{\Delta_{t}}. 
\]
Now, using \eqref{eq: Delta_t stays away from zero} and \eqref{eq: gamma/mu stays close to L} we can deduce
\begin{gather*}
\delta_{3} < \dfrac{1}{2} \sqrt{\Tilde{\delta}_{2}^{2} - \Tilde{\delta}_{1}^{2}} \quad \Rightarrow \quad \lambda - L + \delta_{3} - \sqrt{\Tilde{\delta}_{2}^{2} - \Tilde{\delta}_{1}^{2}} < \lambda - L - \delta_{3} \\
\Rightarrow \quad \underline{\varepsilon}_{t} = \frac{1}{2}\left(\lambda - \frac{\gamma(t)}{\mu(t)}\right) - \dfrac{1}{4} \sqrt{\Delta_{t}}  < \frac{1}{2} \left( \lambda - L + \delta_{3} - \sqrt{\Tilde{\delta}_{2}^{2} - \Tilde{\delta}_{1}^{2}} \right) < \dfrac{1}{2} \left( \lambda - L - \delta_{3} \right) \quad \forall t\geq t_{3}.
\end{gather*}
Similarly, we obtain
\[
\frac{1}{2} \left( \lambda - L + \delta_{3} \right) < \overline{\varepsilon}_{t} \quad \forall t\geq t_{3}.
\]
In other words, we have
\[
I := \left[ \frac{1}{2} \left( \lambda - L - \delta_{3} \right), \frac{1}{2} \left( \lambda - L + \delta_{3} \right) \right] \subseteq \bigl] \underline{\varepsilon}_{t}, \overline{\varepsilon}_{t}\bigr[ \quad \forall t\geq t_{3}. 
\]
The way we chose $\delta_{3}$ also ensures that 
\[
I \subseteq \: ]0, \lambda[.
\]
Thus, for every $\varepsilon\in I$, the term \eqref{eq: parabola} becomes negative, i.e., 
\[
B^{2} - AC < 0 \quad \text{for every} \ \varepsilon \in I \ \text{and every} \ t\geq t_{3}. 
\]
Having shown \eqref{eq: quad inequality HBF monotone}, we go back to \eqref{eq: HBF monotone equation derivative line 1}-\eqref{eq: HBF monotone equation derivative line 5}. For every $\eta = \lambda - \varepsilon > 0$ with $\varepsilon \in I$, it holds
\begin{align}
\frac{d}{dt}\mathcal{E}_{\eta}(t) \leq &\: 4\eta \Bigl[ \dot{\mu}(t) - \gamma(t)\Bigr] \langle y(t) - x_*, V(y(t))\rangle \nonumber\\
&+ (\eta - \lambda) \bigl\| \dot{y}(t)\bigr\|^{2} \nonumber\\
&+ \frac{2}{3}\mu(t) \Bigl[ \dot{\mu}(t) - \gamma(t)\Bigr] \| V(y(t))\|^{2} \nonumber \\
\leq &\: 0 \quad \forall t\geq t_{3}. \label{eq: derivative of energy function after quad HBF monotone} 
\end{align}
This means that for every $\eta = \lambda - \varepsilon$, where $\varepsilon \in I$, $\mathcal{E}_{\eta}(\cdot)$ is nonincreasing on $[{t_{3}}, +\infty[$, therefore
\begin{gather}
0 \leq \mathcal{E}_{\eta}(t) \leq \mathcal{E}_{\eta}(t_{3}) \quad \text{for } t\geq t_{3}  \label{eq: HBF monotone energy functions are bounded}\\
\intertext{and}
\lim_{t\to +\infty} \mathcal{E}_{\eta}(t) \in \R \text{ exists.} \label{eq: HBF monotone energy function has a limit}
\end{gather}
In particular, going back to \eqref{eq: HBF monotone energy function line 1}-\eqref{eq: HBF monotone energy function line 4}, we deduce that
\begin{equation}
\| V(y(t))\| \leq \frac{\sqrt{2\mathcal{E}_{\eta}(t_{3})}}{\mu(t)} \quad \mbox{and} \quad \langle y(t) - x_*, V(y(t))\rangle \leq \frac{\mathcal{E}_{\eta}(t_{3})}{2\eta \mu(t)} \quad \forall t\geq t_{3}.
\end{equation}

\item 
From \eqref{eq: strict supremum} and \eqref{eq: gamma/mu stays close to L}, for every $t\geq t_{3}$ we have
\begin{gather*}
\frac{\dot{\mu}(t)}{\gamma(t)} \leq \delta_{1} < 1 \quad \text{and} \quad L - \delta_{3} \leq \frac{\gamma(t)}{\mu(t)} \leq L + \delta_{3} \quad \Rightarrow \quad (1 - \delta_{1}) \gamma(t) \leq \gamma(t) - \dot{\mu}(t) \\
\Rightarrow \quad (1 - \delta_{1}) (L - \delta_{3}) \mu(t) \leq (1 - \delta_{1}) \gamma(t) \leq \gamma(t) - \dot{\mu}(t).
\end{gather*}
Going back to \eqref{eq: derivative of energy function after quad HBF monotone}, we integrate the inequality from $t_{3}$ to $t\geq t_{3}$ and obtain 
\begin{align*}
&\: 4\eta (L - \delta_{3})(1 - \delta_{1})\int_{t_{3}}^{t} \mu(s) \langle y(s) - x_*, V(y(s))\rangle ds + (\lambda - \eta) \int_{t_{3}}^{t} \bigl\| \dot{y}(s)\bigr\|^{2} ds \\
&+ \frac{2}{3} (L - \delta_{3})(1 - \delta_{1}) \int_{t_{3}}^{t} \mu^{2}(s) \| V(y(s))\|^{2} ds \\ 
\leq &\: 4\eta \int_{t_{3}}^{t} \Bigl[ \gamma(s) - \dot{\mu}(s)\Bigr] \langle y(s) - x_*, V(y(s))\rangle ds + (\lambda - \eta) \int_{t_{3}}^{t} \bigl\| \dot{y}(s)\bigr\|^{2} ds\\
& \: + \frac{2}{3} \int_{t_{3}}^{t} \mu(s) \Bigl[ \gamma(s) - \dot{\mu}(s)\Bigr] \| V(y(s))\|^{2} ds \\
\leq &\: \mathcal{E}_{\eta}(t_{3}) - \mathcal{E}_{\eta}(t), 
\end{align*}
which produces
\begin{align}
\int_{t_{3}}^{+\infty} \mu(t) \langle y(t) - x_*, V(y(t))\rangle dt &< +\infty, \label{eq: HBF monotone integrability result 1}\\
\int_{t_{3}}^{+\infty} \bigl\| \dot{y}(t)\bigr\|^{2} dt &< +\infty, \label{eq: HBF monotone integrability result 2}\\
\int_{t_{3}}^{+\infty} \mu^{2}(t) \|V(y(t))\|^{2} dt &< +\infty. \label{eq: HBF monotone integrability result 3} 
\end{align}
\end{enumerate}
\end{proof}
\color{black}

We are now in conditions to state and prove the main theorem of this section. 
\begin{theorem}\label{thm: heavy ball operator}
Suppose that $\lambda$, $\mu(\cdot)$ and $\gamma(\cdot)$ satisfy assumption \eqref{HBF:b-lambda monotone}. Let $x_*$ be a zero of $V$ and $y: [t_0,+\infty [ \rightarrow \cH$ be a solution trajectory of \eqref{eq: heavy ball system operator}. Then, the following statements are true:
\begin{enumerate}
\item[(i)] (convergence rates) It holds 
\begin{equation}
\| V(y(t))\| = o \left( \frac{1}{\mu(t)}\right), \quad \langle y(t) - x_*, V(y(t))\rangle = o\left( \frac{1}{\mu(t)}\right), \quad \bigl\| \dot{y}(t)\bigr\| \to 0 \quad \mbox{as} \ t \rightarrow +\infty.
\end{equation}
\item[(ii)] (trajectory convergence) The solution trajectory $y(t)$ converges weakly to a zero of $V$ as $t\to +\infty$. 
\end{enumerate}

\end{theorem}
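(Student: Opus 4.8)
The plan is to deduce the theorem from Proposition~\ref{prop:HBF-like}, upgrading its $O$-type consequences to the claimed small-$o$ rates and then applying Opial's lemma. From Proposition~\ref{prop:HBF-like}\ref{prop:HBF-like:lim} --- more precisely, from the resulting boundedness of $\mathcal E_{\lambda-\varepsilon}(\cdot)$ on $[t_3,+\infty[$ for $\varepsilon\in I$ --- inspecting the four summands $\mathcal E_\eta^1,\dots,\mathcal E_\eta^4$ one immediately reads off $\|V(y(t))\|=O(1/\mu(t))$, $\langle y(t)-x_*,V(y(t))\rangle=O(1/\mu(t))$, and the boundedness of $t\mapsto\|y(t)-x_*\|$ and of $t\mapsto\|\dot y(t)\|$. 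I would also record that the first two requirements in \eqref{HBF:b-lambda monotone} make $\dot\mu(t)/\mu(t)$ eventually bounded, so that, $\mu$ being nondecreasing, $\int_{\frac{t+t_0}{2}}^{t}\mu^2(r)\,dr\ge c\,\mu^2(t)$ for all large $t$ (integrate the elementary bound $\mu^2(r)\ge\mu^2(t)e^{-2(L+\varepsilon)(t-r)}$ over $r\in[\tfrac{t+t_0}{2},t]$), while $\mu^2(t)\big/\!\int_{\frac{t+t_0}{2}}^{t}\mu^2(r)\,dr$ stays bounded; both facts are used at the end.

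To pass from $O$ to $o$ I would mimic the role played by $W(\cdot)$ for \eqref{HBF:eq} in Section~\ref{sec: section 2}: introduce an auxiliary function $\mathcal W(\cdot)$, a quadratic combination of $\|V(y(t))\|^2$, $\mu(t)^{-1}\langle V(y(t)),\dot y(t)\rangle$ and $\mu(t)^{-2}\|\dot y(t)\|^2$ with time-dependent coefficients formed from $\lambda,\mu,\dot\mu,\gamma$, chosen so that it is comparable (up to positive constants) to $\|V(y(t))\|^2+\mu(t)^{-2}\|\dot y(t)\|^2$, so that in addition $\mu^2(t)\mathcal W(t)\lesssim\|\dot y(t)\|^2+\mu^2(t)\|V(y(t))\|^2$, and, crucially, so that $\dot{\mathcal W}(t)\le0$ for all large $t$. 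Verifying this last monotonicity is the main obstacle: after substituting \eqref{eq: heavy ball system operator}, it reduces --- as in the proof of Proposition~\ref{prop:HBF-like} --- to a sign analysis of a quadratic form in $\dot y(t)$ and $V(y(t))$ (in the spirit of Lemma~\ref{lem:quad}), in which one uses the monotonicity of $V$ (which along the strong trajectory yields $\langle\tfrac{d}{dt}V(y(t)),\dot y(t)\rangle\ge0$) together with the precise calibration of the three conditions in \eqref{HBF:b-lambda monotone}, the delicate point being to tune the weights of $\mathcal W$ so that every indefinite cross term is absorbed. Granting this, Proposition~\ref{prop:HBF-like}\ref{prop:HBF-like:int} gives $\int_{t_0}^{+\infty}\mu^2(t)\mathcal W(t)\,dt<+\infty$, and the monotonicity of $\mathcal W$ yields, exactly as in the proof of Theorem~\ref{thm:HBF}\ref{thm:HBF:rate},
\[
0\le\mathcal W(t)\int_{\frac{t+t_0}{2}}^{t}\mu^2(r)\,dr\le\int_{\frac{t+t_0}{2}}^{t}\mu^2(r)\,\mathcal W(r)\,dr\longrightarrow0\qquad\text{as }t\to+\infty,
\]
so $\mathcal W(t)=o\big(1/\!\int_{\frac{t+t_0}{2}}^{t}\mu^2(r)\,dr\big)$. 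Combined with $\int_{\frac{t+t_0}{2}}^{t}\mu^2(r)\,dr\ge c\,\mu^2(t)$ this gives $\|V(y(t))\|=o(1/\mu(t))$, hence also $\langle y(t)-x_*,V(y(t))\rangle\le\|y(t)-x_*\|\,\|V(y(t))\|=o(1/\mu(t))$ by Cauchy--Schwarz and the boundedness of $\|y(t)-x_*\|$; finally $\|\dot y(t)\|^2/\mu^2(t)\lesssim\mathcal W(t)=o\big(1/\!\int_{\frac{t+t_0}{2}}^{t}\mu^2(r)\,dr\big)$ together with the boundedness of $\mu^2(t)\big/\!\int_{\frac{t+t_0}{2}}^{t}\mu^2(r)\,dr$ forces $\|\dot y(t)\|\to0$. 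This settles~(i).

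For~(ii) I would invoke Opial's lemma (Lemma~\ref{Opial}) with $C=\Zer V$. Expanding the four lines of $\mathcal E_\eta$, one finds
\[
\mathcal E_\eta(t)=2\eta\Big(\lambda\|y(t)-x_*\|^2+\tfrac{d}{dt}\|y(t)-x_*\|^2+2\mu(t)\langle y(t)-x_*,V(y(t))\rangle\Big)+R(t),
\]
with $R(\cdot)$ independent of $\eta$; choosing two distinct values of $\eta=\lambda-\varepsilon$ in $I$ and using Proposition~\ref{prop:HBF-like}\ref{prop:HBF-like:lim} shows that $t\mapsto\lambda\|y(t)-x_*\|^2+\tfrac{d}{dt}\|y(t)-x_*\|^2+2\mu(t)\langle y(t)-x_*,V(y(t))\rangle$ has a finite limit. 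Since $\mu(t)\langle y(t)-x_*,V(y(t))\rangle\ge0$ by monotonicity and is integrable on $[t_0,+\infty[$ by Proposition~\ref{prop:HBF-like}\ref{prop:HBF-like:int}, treating the above as a first-order linear differential equation for $q(t):=\|y(t)-x_*\|^2$ (as in Lemma~\ref{lem:q}, the extra integrable term being harmless) yields that $\lim_{t\to+\infty}\|y(t)-x_*\|$ exists for every $x_*\in\Zer V$; in particular $y(\cdot)$ is bounded and has weak sequential cluster points. If $y(t_n)\rightharpoonup\bar y$ with $t_n\to+\infty$, then $V(y(t_n))\to0$ by~(i), and since $V$ is monotone and continuous it is maximally monotone, whence its graph is sequentially closed in the weak$\times$strong topology and $V(\bar y)=0$. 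Both hypotheses of Opial's lemma being verified, $y(t)$ converges weakly to an element of $\Zer V$ as $t\to+\infty$.
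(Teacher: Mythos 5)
Your part (i) rests on an ingredient you never supply. You propose a decreasing quantity $\mathcal W(t)$, a weighted combination of $\|V(y(t))\|^2$, $\mu(t)^{-1}\langle V(y(t)),\dot y(t)\rangle$ and $\mu(t)^{-2}\|\dot y(t)\|^2$ that is two-sidedly comparable to $\|V(y(t))\|^2+\mu(t)^{-2}\|\dot y(t)\|^2$ and satisfies $\dot{\mathcal W}(t)\le 0$ for large $t$; you yourself call this ``the main obstacle'' and then proceed ``granting this''. That is precisely the step that needs a proof, and it is doubtful that it can be carried out under assumption \eqref{HBF:b-lambda monotone} alone. If you write $\mathcal W = a\|V\|^2 + b\langle V,\dot y\rangle + c\|\dot y\|^2$ and differentiate along \eqref{eq: heavy ball system operator}, killing the signless term $\langle V,\tfrac{d}{dt}V(y)\rangle$ forces $b=2a/\mu$, and then the discriminant condition of Lemma \ref{lem:quad} for the remaining quadratic form in $(\|\dot y\|,\|V\|)$ reduces (with bounded weights, as your comparability requirement demands) to an equality-type constraint of the form $\bigl(\lambda+\tfrac{\dot\mu}{\mu}-\theta\tfrac{\gamma}{\mu}\bigr)^2\le 0$, which \eqref{HBF:b-lambda monotone} does not provide except in degenerate cases. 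So the central analytic step of (i) is missing, not merely unpolished. The paper avoids any such monotone quantity: it takes two admissible values $\eta_1\neq\eta_2$, uses that $\mathcal E_{\eta_2}-\mathcal E_{\eta_1}$ is a multiple of $p(t):=\tfrac{\lambda}{2}\|y(t)-x_*\|^2+\langle y(t)-x_*,\dot y(t)+\mu(t)V(y(t))\rangle$ to get that $p$ converges, writes $\mathcal E_{\eta_1}(t)=4\eta_1 p(t)+h(t)$ with $h(t)=\|\dot y(t)+\mu(t)V(y(t))\|^2+\|\dot y(t)\|^2$, concludes that $h$ has a limit, and then uses the integrability of $h$ (from Proposition \ref{prop:HBF-like} \ref{prop:HBF-like:int}) to force $h(t)\to 0$; this yields $\|\dot y(t)\|\to 0$ and $\mu(t)\|V(y(t))\|\to 0$ directly, with no $\mathcal W$ and no averaging over $[\tfrac{t+t_0}{2},t]$.

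The ironic part is that you already have this fix in hand: the identity you expand in part (ii), $\mathcal E_\eta(t)=4\eta\,p(t)+R(t)$ with $R(t)=\tfrac12\|2\dot y(t)+\mu(t)V(y(t))\|^2+\tfrac12\mu^2(t)\|V(y(t))\|^2$ independent of $\eta$, is exactly the paper's decomposition; two values of $\eta$ give convergence of $p$, hence of $R$, and $R$ is integrable by Proposition \ref{prop:HBF-like} \ref{prop:HBF-like:int}, so $R(t)\to 0$, which is all of (i). Part (ii) itself is essentially the paper's argument and is fine: the limit of $\lambda\|y(t)-x_*\|^2+\tfrac{d}{dt}\|y(t)-x_*\|^2+2\mu(t)\langle y(t)-x_*,V(y(t))\rangle$ exists, and the nonnegative integrable term is indeed harmless (the paper absorbs it by applying Lemma \ref{lem:q} to $q(t)=\tfrac12\|y(t)-x_*\|^2+\int_{t_0}^t\mu(s)\langle y(s)-x_*,V(y(s))\rangle\,ds$, which is the clean way to phrase your ``variation of constants'' remark); the Opial step with the weak--strong closedness of the graph of the maximally monotone $V$ is as in the paper.
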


\begin{proof}
\begin{enumerate}
\item 
Let $ I \subseteq ]0, \lambda[$ be the interval provided by Proposition \ref{prop:HBF-like} \ref{prop:HBF-like:lim}.  Choose $\varepsilon_{1}, \varepsilon_{2} \in I$ such that $\varepsilon_{1} \neq \varepsilon_{2}$. Set $\eta_{i} := \lambda - \varepsilon_{i}$, $i = 1, 2$. We have
\begin{align*}
\mathcal{E}_{\eta_{2}}(t) - \mathcal{E}_{\eta_{1}}(t) = & \ \ \ \frac{1}{2} \Bigl\| 2\eta_{2}(y(t) - x_*) + 2\dot{y}(t) + \mu(t) V(y(t))\Bigr\|^{2}\\
& \! - \frac{1}{2}\Bigl\| 2\eta_{1}(y(t) - x_*) + 2\dot{y}(t) + \mu(t) V(y(t))\Bigr\|^{2} \\
&+ 2 \Bigl[ \eta_{2}(\lambda - \eta_{2}) - \eta_{1}(\lambda - \eta_{1})\Bigr] \| y(t) - x_*\|^{2} + 2 (\eta_{2} - \eta_{1})\mu(t) \langle y(t) - x_*, V(y(t))\rangle \\
= &\ \ \ 2 \bigl( \eta_{2}^{2} - \eta_{1}^{2}\bigr) \| y(t) - x_*\|^{2} + 2(\eta_{2} - \eta_{1}) \Bigl\langle y(t) - x_*, 2\dot{y}(t) + \mu(t) V(y(t))\Bigr\rangle \\
&+ \Bigl[ 2\lambda(\eta_{2} - \eta_{1}) - 2\bigl(\eta_{2}^{2} - \eta_{1}^{2}\bigr)\Bigr]\| y(t) - x_*\|^{2} + 2(\eta_{2} - \eta_{1}) \mu(t) \langle y(t) - x_*, V(y(t))\rangle \\
= &\ \ \ 4 (\eta_{2} - \eta_{1}) \left[ \frac{\lambda}{2} \| y(t) - x_*\|^{2} + \Bigl\langle y(t) - x_*, \dot{y}(t) + \mu(t) V(y(t))\Bigr\rangle\right].
\end{align*}
Define, for $t\geq t_{0}$, 
\[
p(t) := \frac{\lambda}{2} \| y(t) - x_*\|^{2} + \Bigl\langle y(t) - x_*, \dot{y}(t) + \mu(t) V(y(t))\Bigr\rangle.
\]
Because of \eqref{eq: HBF monotone energy function has a limit}, and since $\eta_{2} - \eta_{1} \neq 0$, we deduce that 
\[
\lim_{t\to +\infty}p(t) \in \R \text{ exists.}
\]
With this at hand, we can now rewrite $\mathcal{E}_{\eta_{1}}(t)$ for every $t \geq t_0$ as 
\begin{align*}
\mathcal{E}_{\eta_{1}}(t) = &\: \frac{1}{2}\Bigl\| 2\eta_{1}(y(t) - x_*) + 2\dot{y}(t) + \mu(t) V(y(t))\Bigr\|^{2} + 2\eta_{1} (\lambda - \eta_{1}) \| y(t) - x_*\|^{2} \\
&+ 2\eta_{1} \mu(t) \langle y(t) - x_*, V(y(t))\rangle + \frac{1}{2} \mu^{2}(t) \| V(y(t))\|^{2} \\
= &\: \frac{1}{2} \left[ 4\eta_{1}^{2} \| y(t) - x_*\|^{2} + 2\eta_{1} \Bigl\langle y(t) - x_*, 2\dot{y}(t) + \mu(t) V(y(t))\Bigr\rangle + \Bigl\| 2\dot{y}(t) + \mu(t) V(y(t))\Bigr\|^{2}\right] \\
&+ 2\eta_{1}\lambda \| y(t) - x_*\|^{2} - 2\eta_{1}^{2} \| y(t) - x_*\|^{2} + 2\eta_{1} \mu(t) \langle y(t) - x_*, V(y(t))\rangle + \frac{1}{2} \mu^{2}(t) \| V(y(t))\|^{2} \\
= &\: 4\eta_{1} p(t) + \frac{1}{2} \Bigl\| 2\dot{y}(t) + \theta b(t) V(y(t))\Bigr\|^{2} +  \frac{1}{2} \mu^{2}(t) \| V(y(t))\|^{2} \\
= &\: 4\eta_{1} p(t) + \Bigl\| \dot{y}(t) + \mu(t) V(y(t))\Bigr\|^{2} + \bigl\| \dot{y}(t)\bigr\|^{2}.
\end{align*}
Define 
\[
h(t) := \Bigl\| \dot{y}(t) + \mu(t) V(y(t))\Bigr\|^{2} + \bigl\| \dot{y}(t)\bigr\|^{2}.
\]
Since both $\lim_{t\to +\infty}\mathcal{E}_{\eta_{1}}(t)$ and $\lim_{t\to +\infty} p(t)$ exist, so does $\lim_{t\to +\infty} h(t)$. Notice that
\[
\int_{t_{0}}^{t} h(s) ds \leq 3\int_{t_{0}}^{t} \bigl\| \dot{y}(s)\bigr\|^{2} ds + 2 \int_{t_{0}}^{t} \mu^{2}(s)\| V(y(s))\|^{2} ds, 
\]
and the integrals on the right-hand side remain finite as $t\to +\infty$ according to Proposition \ref{prop:HBF-like} \ref{prop:HBF-like:lim} \ref{prop:HBF-like:int}, i.e., $\int_{t_{0}}^{+\infty} h(t) dt < +\infty$. Combining this with the fact that $\lim_{t\to +\infty} h(t)$ exists allows us to deduce 
\[
\lim_{t\to +\infty} h(t) = 0. 
\]
This implies 
\[
\lim_{t\to +\infty} \|\dot{y}(t) \| = 0, 
\]
combining this with $\lim_{t\to +\infty} \bigl\| \dot{y}(t) + \mu(t) V(y(t))\bigr\| = 0$ gives
\[
\lim_{t\to +\infty} \mu(t) \| V(y(t))\| = 0, \quad \text{or} \quad \| V(y(t))\| = o\left( \frac{1}{\mu(t)}\right) \quad \text{as} \quad t\to +\infty.
\]
Using the boundedness of $t \mapsto \| y(t) - x_*\|$ produces 
\[
\langle y(t) - x_*, V(y(t))\rangle = o \left(\frac{1}{\mu(t)}\right) \quad \text{as} \quad t\to +\infty.
\]
\item 
We will make use of Opial's Lemma. Define, for $t\geq t_{0}$, 
\[
q(t) := \frac{1}{2}\| y(t) - x_*\|^{2}  + \int_{t_{0}}^{t} \mu(s) \langle y(s) - x_*, V(y(s))\rangle ds.
\]
Recalling the definition of $p(\cdot)$, we have 
\begin{align*}
\lambda q(t) + \dot{q}(t) &= \frac{\lambda}{2} \| y(t) - x_*\|^{2} + \Bigl\langle y(t) - x_*, \dot{y}(t) + \mu(t) V(y(t))\Bigr\rangle + \lambda \int_{t_{0}}^{t} \mu(s) \langle y(s) - x_*, V(y(s))\rangle ds \\
&= p(t) + \lambda \int_{t_{0}}^{t} \mu(s) \langle y(s) - x_*, V(y(s))\rangle ds.
\end{align*}
According to \eqref{eq: HBF monotone integrability result 1}, the integral in the previous sum converges as $t\to +\infty$. Since we already established that $\lim_{t\to +\infty} p(t)$ exists, we are lead to 
\[
\lim_{t\to +\infty}\bigl( \lambda q(t) + \dot{q}(t)\bigr) \in \R \text{ exists.}
\]
Using Lemma \ref{lem:q}, we obtain the existence of $\lim_{t\to +\infty}q(t)$. Using again that $\int_{t_{0}}^{+\infty} \mu(s) \langle y(s) - x_*, V(y(s))\rangle ds < +\infty$, we finally deduce that 
\[
\lim_{t\to +\infty} \| y(t) - x_*\| \text{ exists.}
\]
Thus, the first condition of Opial's Lemma is met. For the second condition, let $\overline{y}$ be a sequential cluster point $y(t)$ as $t\to +\infty$, which means there exists a sequence $({t_{n}})_{n\in \N} \subseteq [t_{0}, +\infty[$ such that ${t_{n}}\to +\infty$  as $n\to +\infty$ and 
\[
y({t_{n}}) \rightharpoonup \overline{y} \quad \text{as} \quad n\to +\infty. 
\]
Since we already know that $\| V(y(t))\| = o\left(\frac{1}{\mu(t)}\right)$ as $t\to +\infty$ and $\mu(\cdot)$ is nondecreasing, we have $V(y({t_{n}})) \to 0$ as $n\to +\infty$. Since $V$ is maximally monotone, its graph is closed in $\mathcal{H}^{\text{weak}} \times \mathcal{H}^{\text{strong}}$, thus
\[
V(\overline{y}) = 0. 
\]
Now both conditions of Opial's Lemma (see Lemma \ref{Opial}) are fulfilled, from which we finally conclude the proof of this theorem.
\end{enumerate}
\end{proof}

\section{Connection with a system with asymptotic vanishing damping governed by a monotone and continuous operator}\label{sec: connection between heavy ball and AVD, operator case}

Here, as it was done before, we present two systems attached to \eqref{eq: monotone equation}: one has a fixed viscosity parameter, i.e., the Heavy Ball dynamics \eqref{eq: heavy ball system operator}, while the other features an asymptotically vanishing viscosity coefficient accompanying the velocity; precisely speaking, these are exactly the Fast OGDA dynamics (see \cite{fOGDA}) for the case $\beta(\cdot) \equiv 1$. While at first glance they might appear fundamentally different, we will see that one is a time-rescaled version of the other.
\subsection{Two equivalent dynamical systems through time rescaling}
Similar to what was done in Section \ref{sec: connection between heavy ball and AVD, function case}, we start with a trajectory solution $y \colon [t_{0}, +\infty[ \to \mathcal{H}$ of
\begin{equation}\label{eq: heavy ball system operator 2}
\ddot{y}(t) + \lambda \dot{y}(t) + \mu(t) \frac{d}{dt}V(y(t)) + \gamma(t) V(y(t)) = 0, 
\end{equation}
and define $x(s) := y(\tau(s))$, where $\tau \colon [s_{0}, +\infty[ \to [t_{0}, +\infty[$ is a continuously differentiable function such that $\dot{\tau}(s) > 0$ for every $s\geq s_{0} > 0$ and $\lim_{s \to +\infty} \tau(s) = +\infty$. We have 
\begin{align*}
\dot{x}(s)  = \dot{\tau}(s) \dot{y}(\tau(s)) 
\quad \textrm{ and } \quad
\ddot{x}(s) = \ddot{\tau}(s) \dot{y}(\tau(s)) + \bigl(\dot{\tau}(s)\bigr)^{2} \ddot{y}(\tau(s)) .
\end{align*}
These expressions lead to
\begin{equation*}
\dot{y}(\tau(s)) = \frac{1}{\dot{\tau}(s)} \dot{x}(s)
\quad \textrm{ and } \quad
\ddot{y}(\tau(s)) = \frac{1}{\bigl(\dot{\tau}(s)\bigr)^{2}} \Bigl[ \ddot{x}(s) - \ddot{\tau}(s) \dot{y}(\tau(s))\Bigr] = \frac{1}{\bigl(\dot{\tau}(s)\bigr)^{2}} \left[ \ddot{x}(s) - \frac{\ddot{\tau}(s)}{\dot{\tau}(s)} \dot{x}(s)\right] .
\end{equation*}
Moreover
\begin{gather*}
\frac{d}{ds} \Bigl( V(y(\cdot)) \circ \tau\Bigr)(s) = \dot{\tau}(s) \frac{d}{dt} V(y(t)) \Bigr|_{t = \tau(s)} \: \Rightarrow \: \frac{d}{dt} V(y(t)) \Bigr|_{t = \tau(s)} = \frac{1}{\dot{\tau}(s)} \frac{d}{ds} V(x(s)).
\end{gather*}
Now, plugging $t = \tau(s)$ in \eqref{eq: heavy ball system operator 2} gives
\begin{equation*}
\frac{1}{\bigl(\dot{\tau}(s)\bigr)^{2}} \left[ \ddot{x}(s) - \frac{\ddot{\tau}(s)}{\dot{\tau}(s)}\dot{x}(s)\right] + \frac{\lambda}{\dot{\tau}(s)}\dot{x}(s) +  \mu(\tau(s)) \frac{1}{\dot{\tau}(s)} \frac{d}{ds}V(x(s)) + \gamma(\tau(s)) V(x(s)) = 0
\end{equation*}
or, equivalently,
\begin{equation}\label{eq: time rescaled heavy ball, operator}
\ddot{x}(s) + \left[ \lambda \dot{\tau}(s) - \frac{\ddot{\tau}(s)}{\dot{\tau}(s)}\right] \dot{x}(s) + \dot{\tau}(s) \mu(\tau(s)) \frac{d}{ds} V(x(s)) + \big( \dot{\tau}(s)\bigr)^{2} \gamma(\tau(s)) V(x(s)) = 0.
\end{equation}
We briefly recall that the Fast OGDA dynamics \cite{fOGDA}, for constant $\beta(\cdot) \equiv 1$, read like
\begin{equation}\label{eq: fOGDA}
\ddot{x}(s) + \frac{\alpha}{s} \dot{x}(s) + \frac{d}{ds} V(x(s)) + \frac{\alpha}{2s} V(x(s)) = 0 \quad \mbox{for} \ s \geq s_0  >0.
\end{equation}
Going back to \eqref{eq: time rescaled heavy ball, operator}, if we want an asymptotic vanishing viscosity coefficient accompanying the velocity, we need to ask  for
\[
\begin{cases}
\displaystyle\lambda \dot{\tau}(s) - \frac{\ddot{\tau}(s)}{\dot{\tau}(s)} &= \displaystyle\frac{\alpha}{s} \\
\displaystyle\tau(s_{0}) &= t_{0}.
\end{cases}  
\]
Since it was done before in Section \ref{sec: connection between heavy ball and AVD, function case}, we don't repeat the derivation for finding the solution here. We know that for $s\geq s_{0} > 0$, the function 
\begin{equation}
\label{defi:tau}
\tau(s) := \frac{\alpha - 1}{\lambda} \ln(s) + \left(-\frac{\alpha - 1}{\lambda}\ln(s_{0}) + t_{0}\right) = \frac{\alpha - 1}{\lambda}\ln\left(\frac{s}{s_{0}}\right) + t_{0}
\end{equation}
satisfies this differential equation. We have 
\[
\dot{\tau}(s) = \frac{\alpha - 1}{\lambda s} \quad \Rightarrow \quad \bigl( \dot{\tau}(s)\bigr)^{2} = \frac{(\alpha - 1)^{2}}{\lambda^{2} s^{2}}.
\]
We need, of course, to assume that $\alpha > 1$. Furthermore, we wish the coefficient attached to $V(x(s))$ to be $\frac{\alpha}{2s}$, i.e., 
\[
\bigl( \dot{\tau}(s)\bigr)^{2} \gamma(\tau(s)) = \frac{\alpha}{2s} \quad \Leftrightarrow \quad \gamma\left( \frac{\alpha - 1}{\lambda} \ln\left(\frac{s}{s_{0}}\right) + t_{0}\right) = \frac{\alpha}{2}\cdot\frac{\lambda^{2} s}{(\alpha - 1)^{2}}, 
\]
which is fulfilled if we choose 
\[
\gamma(t) := \frac{\alpha}{2}\cdot \frac{\lambda^{2} s_{0}}{(\alpha - 1)^{2}} \exp \left( \frac{\lambda (t - t_{0})}{\alpha - 1}\right). 
\]
Set $\mu(\cdot) = \gamma(\cdot)$. Notice that for every $t \geq t_0$
\[
\frac{\dot{\mu}(t)}{\gamma(t)} = \frac{\dot{\mu}(t)}{\mu(t)} = \frac{\left( \frac{\lambda s_{0}}{\alpha - 1}\right)^{2} \cdot \frac{\lambda}{\alpha - 1} \exp\left( \frac{\lambda (t - t_{0})}{\alpha - 1}\right)}{\left( \frac{\lambda s_{0}}{\alpha - 1}\right)^{2} \exp\left( \frac{\lambda (t - t_{0})}{\alpha - 1}\right)} = \frac{\lambda}{\alpha - 1}.
\]
Thus, the assumption \eqref{HBF:b-lambda monotone} is satisfied if and only if 
\[
\frac{\lambda}{\alpha - 1} < 1 \quad \text{and} \quad 2\lambda - 3 + \frac{\lambda}{\alpha - 1} > 0. 
\]
After rearranging terms, the two previous inequalities are equivalent to 
\begin{equation}\label{eq: inequality for lambda and alpha}
\frac{3(\alpha - 1)}{2\alpha - 1} = \frac{3}{2 + \frac{1}{\alpha - 1}} < \lambda < \alpha - 1.
\end{equation}
In particular, 
\[
\frac{3(\alpha - 1)}{2\alpha - 1} < \alpha - 1 \quad \Leftrightarrow \quad 2 < \alpha.
\]
We now turn our attention to the coefficient attached to $\frac{d}{ds}V(x(s))$ in \eqref{eq: time rescaled heavy ball, operator}. Since we want to reach \eqref{eq: fOGDA}, we need 
\[
\frac{\alpha}{2}\cdot \frac{\lambda}{\alpha - 1} = \dot{\tau}(s) \mu(\tau(s)) = 1 \quad \Leftrightarrow \quad \lambda = \frac{2(\alpha - 1)}{\alpha}.
\]
We must verify that this choice of $\lambda$ satisfies inequality \eqref{eq: inequality for lambda and alpha}. Indeed, 
\begin{gather*}
\frac{2(\alpha - 1)}{\alpha} < \alpha - 1 \quad \Leftrightarrow \quad 2 < \alpha \quad \text{and} \\
\frac{3(\alpha - 1)}{2\alpha - 1} < \frac{2(\alpha - 1)}{\alpha} \quad \Leftrightarrow \quad 3\alpha < 4\alpha - 2 \quad \Leftrightarrow \quad 2 < \alpha. 
\end{gather*}
All in all, $s\mapsto x(s)$ fulfills 
\[
\ddot{x}(s) + \frac{\alpha}{s}\dot{x}(s) + \frac{d}{ds}V(x(s)) + \frac{\alpha}{2s}V(x(s)) = 0. 
\]
Conversely, if for $\alpha > 2$, $x \colon [s_{0}, +\infty[ \to \mathcal{H}$ is a trajectory solution of the previous system and we define $y(t) := x(\sigma(t))$, where $\sigma \colon [t_{0}, +\infty[ \to [s_{0}, +\infty[$ is a continuously differentiable function such that $\dot{\sigma}(t) > 0$ for all $t\geq t_{0} \geq 0$ and $\lim_{t \to +\infty} \sigma(t) = +\infty$, arguing in a similar fashion as it was done previously we arrive at
\[
\ddot{y}(t) + \left[ \alpha \frac{\dot{\sigma}(t)}{\sigma(t)} - \frac{\ddot{\sigma}(t)}{\dot{\sigma}(t)}\right] \dot{y}(t) + \dot{\sigma}(t) \frac{d}{dt} V(y(t)) + \frac{\alpha}{2}\cdot \frac{\bigl( \dot{\sigma}(t)\bigr)^{2}}{\sigma(t)} V(y(t)) = 0. 
\]
We wish for the coefficient attached to $\dot{y}(t)$ to be $\lambda = \frac{2(\alpha - 1)}{\alpha}$. For this end, we need $\sigma$ to satisfy the differential equation
\[
\begin{cases}
\displaystyle \alpha\frac{\dot{\sigma}(t)}{\sigma(t)} - \frac{\ddot{\sigma}(t)}{\dot{\sigma}(t)} &= \displaystyle\frac{2(\alpha - 1)}{\alpha}, \\
\displaystyle\sigma(t_{0}) &= s_{0},
\end{cases}
\]
which is fulfilled by
\[
\sigma(t) := s_{0} \exp\left(\frac{2(t - t_{0})}{\alpha}\right).
\]
With this choice for $\sigma(\cdot)$, the resulting system reads
\[
\ddot{y}(t) + \frac{2(\alpha - 1)}{\alpha} \dot{y}(t) + \frac{2 s_{0}}{\alpha} \exp\left(\frac{2(t - t_{0})}{\alpha}\right)\frac{d}{dt} V(y(t)) + \frac{2 s_{0}}{\alpha}\exp\left( \frac{2(t - t_{0})}{\alpha}\right) V(y(t)) = 0.
\]
It is straightforward to check (we have actually done this already when going from the Heavy Ball system to the Fast OGDA dynamics) that
\[
\lambda = \frac{2(\alpha - 1)}{\alpha} \quad \mbox{and} \quad \mu(t) = \gamma(t) = \frac{2 s_{0}}{\alpha} \exp\left(\frac{2(t - t_{0})}{\alpha}\right) \quad \forall t \geq t_0
\]
satisfy Assumption \eqref{HBF:b-lambda monotone}, and thus all the results ensured by Theorem \ref{thm: heavy ball operator} hold.  

We have essentially shown the following proposition. 
\begin{proposition}\label{prop: heavy ball and AVD connected, operator}
Assume that $\alpha > 2$ and that $s_{0} >0, t_{0} \geq 0$ are initial times. Consider the following second-order systems:
\begin{equation}\label{eq: heavy ball and AVD connected, heavy ball}
\begin{cases}
\ddot{y}(t) + \frac{2(\alpha - 1)}{\alpha} \dot{y}(t) + \frac{2 s_{0}}{\alpha} \exp\left( \frac{2(t - t_{0})}{\alpha}\right) \frac{d}{dt} V(y(t)) + \frac{2 s_{0}}{\alpha} \exp\left( \frac{2(t - t_{0})}{\alpha}\right) V(y(t)) = 0, \\
y(t_{0}) = y_{0}, \quad \dot{y}(t_{0}) = y_{1},
\end{cases} 
\end{equation}
and 
\begin{equation}\label{eq: heavy ball and AVD connected, AVD}
\begin{cases}
\ddot{x}(s) + \frac{\alpha}{s} \dot{x}(s) + \frac{d}{ds}V(x(s)) + \frac{\alpha}{2s}V(x(s)) = 0, \\
x(s_{0}) = x_{0}, \quad \dot{x}(s_{0}) = x_{1}.
\end{cases}
\end{equation}
Then, the following statements are true:
 \begin{enumerate}[\rm (i)]  
        \item If $ y :[t_{0}, +\infty) \to \mathcal{H}$ is a solution trajectory to \eqref{eq: heavy ball and AVD connected, heavy ball} and the function $ \tau : [s_{0}, +\infty) \to [t_{0}, +\infty)$ is given by 
        \[
            \tau(s) := \frac{\alpha}{2} \ln\left( \frac{s}{s_{0}}\right) + t_{0}, 
        \]
        then the reparametrized trajectory $ x: [s_{0}, +\infty) \to \mathcal{H}$ given by $x(s) := y(\tau(s))$ is a solution to \eqref{eq: heavy ball and AVD connected, AVD} for initial conditions
        \[
            x(s_{0}) = y_{0} \quad \text{and} \quad \dot{x}(s_{0}) = \frac{\alpha}{2s_{0}} y_{1}.
        \]
        \item If $x : [s_{0}, +\infty) \to \mathcal{H}$ is a solution trajectory to \eqref{eq: heavy ball and AVD connected, AVD} and the function $ \sigma : [t_{0}, +\infty) \to [s_{0}, +\infty)$ is given by 
        \[
            \sigma(t) := s_{0}\exp\left( \frac{2(t - t_{0})}{\alpha}\right), 
        \]
        then the reparametrized trajectory $ y: [t_{0}, +\infty) \to \mathcal{H}$ given by $ y(t) := x(\sigma(t))$ is a solution to \eqref{eq: heavy ball and AVD connected, heavy ball} for initial conditions 
        \[
            y(t_{0}) = x_{0} \quad \text{and} \quad \dot{y}(t_{0}) = \frac{2 s_{0}}{\alpha} x_{1}. 
        \]
    \end{enumerate}
\end{proposition}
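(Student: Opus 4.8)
The plan is to assemble the time-rescaling computation that already precedes the statement; what remains after that is only a verification. Concretely, I would invoke the general change-of-variables identity \eqref{eq: time rescaled heavy ball, operator}, specialize the reparametrization to the explicit functions $\tau$ and $\sigma$ given in the statement, check that each solves the first-order ODE that fixes the velocity coefficient, confirm that the remaining two coefficients coincide with those of the target system, and finally propagate the Cauchy data through the chain rule. Along the way I would record once more that the coefficients $\lambda = \frac{2(\alpha-1)}{\alpha}$ and $\mu(t) = \gamma(t) = \frac{2s_0}{\alpha}\exp\left(\frac{2(t-t_0)}{\alpha}\right)$ of \eqref{eq: heavy ball and AVD connected, heavy ball} satisfy assumption \eqref{HBF:b-lambda monotone} precisely when $\alpha > 2$: one has $\dot\mu/\mu \equiv 2/\alpha$, hence $L = \lim_{t\to+\infty}\gamma/\mu = 1$, $\sup_{t\geq t_0}\dot\mu/\gamma = 2/\alpha < 1$, and $2\lambda - 3L + \inf_{t\geq t_0}\dot\mu/\mu = \frac{\alpha-2}{\alpha} > 0$, so that Theorem \ref{thm: heavy ball operator} applies along every such trajectory. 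Equivalently, $\lambda = \frac{2(\alpha-1)}{\alpha}$ is exactly the value sitting inside the admissible window \eqref{eq: inequality for lambda and alpha}, again iff $\alpha > 2$.

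For part (i), I would start from a solution $y$ of \eqref{eq: heavy ball and AVD connected, heavy ball} and set $x(s) := y(\tau(s))$ with $\tau(s) = \frac{\alpha}{2}\ln(s/s_0) + t_0$. Differentiating gives $\dot\tau(s) = \frac{\alpha}{2s}$ and $\ddot\tau(s) = -\frac{\alpha}{2s^2}$, whence $\lambda\dot\tau(s) - \ddot\tau(s)/\dot\tau(s) = \frac{\alpha-1}{s} + \frac{1}{s} = \frac{\alpha}{s}$, so the velocity coefficient in \eqref{eq: time rescaled heavy ball, operator} reduces to $\frac{\alpha}{s}$. Since $\tau(s) - t_0 = \frac{\alpha}{2}\ln(s/s_0)$, one has $\mu(\tau(s)) = \gamma(\tau(s)) = \frac{2s}{\alpha}$, so $\dot\tau(s)\mu(\tau(s)) = 1$ and $\bigl(\dot\tau(s)\bigr)^2\gamma(\tau(s)) = \frac{\alpha}{2s}$; these are exactly the coefficients of $\frac{d}{ds}V(x(s))$ and $V(x(s))$ in \eqref{eq: fOGDA}. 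Hence $x$ solves the differential equation of \eqref{eq: heavy ball and AVD connected, AVD}, and the initial conditions follow from $x(s_0) = y(\tau(s_0)) = y(t_0) = y_0$ together with $\dot x(s_0) = \dot\tau(s_0)\dot y(t_0) = \frac{\alpha}{2s_0}y_1$.

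For part (ii), the quickest route is to observe that $\sigma(t) = s_0\exp\left(\frac{2(t-t_0)}{\alpha}\right)$ is the inverse of $\tau$, so the correspondence of (i) is a bijection and $y(t) := x(\sigma(t))$ is forced to solve \eqref{eq: heavy ball and AVD connected, heavy ball}. To keep the two parts self-contained, though, I would instead redo the change of variables directly: from $\dot\sigma(t) = \frac{2}{\alpha}\sigma(t)$ and $\ddot\sigma(t)/\dot\sigma(t) = \frac{2}{\alpha}$ one gets $\alpha\dot\sigma(t)/\sigma(t) - \ddot\sigma(t)/\dot\sigma(t) = 2 - \frac{2}{\alpha} = \frac{2(\alpha-1)}{\alpha} = \lambda$, while the coefficients of $\frac{d}{dt}V(y(t))$ and $V(y(t))$ become $\dot\sigma(t) = \frac{2s_0}{\alpha}\exp\left(\frac{2(t-t_0)}{\alpha}\right)$ and $\frac{\alpha}{2}\bigl(\dot\sigma(t)\bigr)^2/\sigma(t) = \frac{2s_0}{\alpha}\exp\left(\frac{2(t-t_0)}{\alpha}\right)$, which match \eqref{eq: heavy ball and AVD connected, heavy ball}. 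The Cauchy data again follow from $y(t_0) = x(\sigma(t_0)) = x(s_0) = x_0$ and $\dot y(t_0) = \dot\sigma(t_0)\dot x(s_0) = \frac{2s_0}{\alpha}x_1$.

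I do not expect a genuine obstacle: the proposition is essentially a restatement of the derivation in the text, and each displayed identity above is a one-line substitution. The only points deserving care are keeping the chain-rule factors $\dot\tau(s_0) = \frac{\alpha}{2s_0}$ and $\dot\sigma(t_0) = \frac{2s_0}{\alpha}$ straight when transforming the velocities (note that these are reciprocals, consistent with $\sigma = \tau^{-1}$), and confirming that the single value $\lambda = \frac{2(\alpha-1)}{\alpha}$ — which is forced as soon as one demands the coefficient of $\frac{d}{ds}V(x(s))$ to equal $1$ — is compatible with assumption \eqref{HBF:b-lambda monotone}, which is exactly the hypothesis $\alpha > 2$.
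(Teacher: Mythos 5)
Your proposal is correct and follows essentially the same route as the paper: it specializes the general time-rescaled equation \eqref{eq: time rescaled heavy ball, operator} to the explicit $\tau$ and $\sigma$, matches the coefficients of $\dot x$, $\frac{d}{ds}V(x(s))$ and $V(x(s))$ (respectively $\dot y$, $\frac{d}{dt}V(y(t))$ and $V(y(t))$), and transports the Cauchy data via the chain rule, exactly as in the derivation preceding the proposition. The direct computation you give for part (ii) is the right way to keep it self-contained (the ``$\sigma=\tau^{-1}$ forces it'' shortcut alone would not suffice without a uniqueness argument), and your verification that $\lambda=\frac{2(\alpha-1)}{\alpha}$ satisfies \eqref{HBF:b-lambda monotone} iff $\alpha>2$ matches the paper's check.
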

\subsection{Transferring the rates to Fast OGDA}

As a direct consequence of Theorem \ref{thm: heavy ball operator} and Proposition \ref{prop: heavy ball and AVD connected, operator}, we obtain the following theorem. 

\begin{theorem}
Let $\alpha > 2$, $s_0 >0$, $x_{*}$ be a zero of $V$, and $x : [s_{0}, +\infty[ \to \mathcal{H}$ be a solution trajectory of
\begin{equation*}
\begin{cases}
\ddot{x}(s) + \frac{\alpha}{s} \dot{x}(s) + \frac{d}{ds}V(x(s)) + \frac{\alpha}{2s}V(x(s)) = 0, \\
x(s_{0}) = x_{0}, \quad \dot{x}(s_{0}) = x_{1}.
\end{cases}
\end{equation*}
and let $x_{*}$ be a zero of $V$. Then, it holds
\[
\| V(x(s))\| = o\left( \frac{1}{s}\right), \quad \langle x(s) - x_{*}, V(x(s))\rangle = o\left(\frac{1}{s}\right), \quad \bigl\| \dot{x}(s)\bigr\| = o\left(\frac{1}{s}\right) \quad \mbox{as} \ s\to +\infty.
\]
Furthermore, $x(s)$ converges weakly to a zero of $V$ as $s\to +\infty$. 
\end{theorem}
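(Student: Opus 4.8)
The plan is to exploit the time-rescaling equivalence of Proposition~\ref{prop: heavy ball and AVD connected, operator}, mirroring exactly the transfer argument used in the optimization setting. First I would set $\lambda := \frac{2(\alpha - 1)}{\alpha}$, pick any $t_0 \geq 0$, define $\sigma(t) := s_0 \exp\left( \frac{2(t - t_0)}{\alpha}\right)$ and $y(t) := x(\sigma(t))$. By Proposition~\ref{prop: heavy ball and AVD connected, operator}(ii), $y \colon [t_0, +\infty[ \to \mathcal{H}$ is a solution trajectory of \eqref{eq: heavy ball and AVD connected, heavy ball}, that is, of \eqref{eq: heavy ball system operator} with $\mu(t) = \gamma(t) = \frac{2 s_0}{\alpha} \exp\left( \frac{2(t - t_0)}{\alpha}\right)$; and, as already verified in the derivation preceding the proposition, this choice of $\lambda$, $\mu(\cdot)$, $\gamma(\cdot)$ fulfills Assumption~\eqref{HBF:b-lambda monotone}. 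Hence Theorem~\ref{thm: heavy ball operator} applies to $y$: with $x_*$ the given zero of $V$, we obtain $\| V(y(t))\| = o(1/\mu(t))$, $\langle y(t) - x_*, V(y(t))\rangle = o(1/\mu(t))$ and $\| \dot{y}(t)\| \to 0$ as $t \to +\infty$, together with the weak convergence of $y(t)$ to a zero of $V$.

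Next I would translate these statements back to the variable $s$. Since $\tau(s) := \frac{\alpha}{2} \ln(s/s_0) + t_0$ is the inverse of $\sigma$, we have $\sigma \circ \tau = \mathrm{id}$ on $[s_0, +\infty[$, so $x(s) = y(\tau(s))$ and $\tau(s) \to +\infty$ as $s \to +\infty$. A direct computation gives $\mu(\tau(s)) = \frac{2 s_0}{\alpha} \exp\left( \ln(s/s_0)\right) = \frac{2 s}{\alpha}$, hence $\frac{1}{\mu(\tau(s))} = \frac{\alpha}{2 s}$; substituting into the first two estimates above yields $\| V(x(s))\| = o(1/s)$ and $\langle x(s) - x_*, V(x(s))\rangle = o(1/s)$ as $s \to +\infty$. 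For the velocity, differentiating $x = y \circ \tau$ gives $\dot{x}(s) = \dot{\tau}(s) \dot{y}(\tau(s))$ with $\dot{\tau}(s) = \frac{\alpha}{2 s}$, so $s \| \dot{x}(s)\| = \frac{\alpha}{2} \| \dot{y}(\tau(s))\| \to 0$, i.e. $\| \dot{x}(s)\| = o(1/s)$. Finally, since $x(s) = y(\tau(s))$ with $\tau(s) \to +\infty$, the weak convergence of $y(t)$ to a zero of $V$ transfers immediately to $x(s)$.

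This argument is essentially bookkeeping once Proposition~\ref{prop: heavy ball and AVD connected, operator} and Theorem~\ref{thm: heavy ball operator} are in hand; the only points needing a little care are confirming the identity $\sigma \circ \tau = \mathrm{id}$ (so that the reparametrized trajectory is literally $x$ again, not merely some solution sharing its initial data) and tracking the multiplicative constants $\frac{2}{\alpha}$ and $\frac{\alpha}{2}$ so that the $o(1/\mu(\tau(s)))$ factor and the $\dot{\tau}(s)$ factor combine into clean $o(1/s)$ rates. I do not anticipate any genuine analytic obstacle.
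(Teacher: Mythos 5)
Your proposal is correct and follows essentially the same route as the paper: define $y(t) := x(\sigma(t))$ via Proposition \ref{prop: heavy ball and AVD connected, operator}, apply Theorem \ref{thm: heavy ball operator} with $\mu(t)=\gamma(t)=\frac{2s_0}{\alpha}\exp\bigl(\frac{2(t-t_0)}{\alpha}\bigr)$, and translate back through $\sigma\circ\tau=\mathrm{id}$ using $\mu(\tau(s))=\frac{2s}{\alpha}$ and $\frac{2s}{\alpha}\dot{x}(s)=\dot{y}(\tau(s))$. The bookkeeping of the constants $\frac{2}{\alpha}$ and $\frac{\alpha}{2}$ matches the paper's computation exactly.
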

\begin{proof}
 The proof is near identical to the one in the optimization setting. As per Proposition \ref{prop: heavy ball and AVD connected, operator}, define $ y(t) := x(\sigma(t))$. We know that $ y: [t_{0}, +\infty) \to \mathcal{H}$ is a solution to \eqref{eq: heavy ball and AVD connected, heavy ball}. Again, the function $ \sigma\circ\tau :[s_{0}, +\infty) \to [s_{0}, +\infty)$ is the identity, which gives $x(s) = y(\tau(s))$ and $\frac{2s}{\alpha} \dot{x}(s) = \dot{y}(\tau(s))$ for every $s \geq s_0$. According to Theorem \ref{thm: heavy ball operator}, if we set $ \mu(t) = \gamma(t) = \frac{2 s_{0}}{\alpha} \exp\left( \frac{2(t - t_{0})}{\alpha}\right)$ for all $t \geq t_0$, then it holds, as $s \to +\infty$, 
        \begin{equation*}
            \| V(x(s))\| = o\left( \frac{1}{\mu(\tau(s))}\right), \quad \langle x(s) - x_{*}, V(x(s))\rangle = o \left( \frac{1}{\mu(\tau(s))}\right) \quad \text{and} \quad \frac{2s}{\alpha} \bigl\|\dot{x}(s)\bigr\| \to 0
        \end{equation*}
        We just need to compute $ \mu(\tau(s))$. We readily see
        \[
            \mu(\tau(s)) = \mu\left( \frac{\alpha}{2}\ln\left(\frac{s}{s_{0}}\right) + t_{0}\right) = \frac{2 s_{0}}{\alpha} \exp\left(\frac{2}{\alpha}\cdot \frac{\alpha}{2} \ln\left(\frac{s}{s_{0}}\right)\right) = \frac{2 s}{\alpha}, 
        \]
which immediately shows the rates claimed in the statement. Since $y(t)$ converges weakly to a zero of $V$ as $t\to +\infty$, so does $x(s) = y(\tau(s))$ as $s\to +\infty$.
\end{proof}

\appendix

\section{Auxiliary results} In what follows, we briefly recall some of the results used throughout the paper.

A key tool in proving the weak convergence of the solution trajectory is Opial's Lemma.

\begin{lemma}\label{Opial} (Opial) Let $S$ be a nonempty subset of $\mathcal H$ and let $x:[t_0,+\infty[\to \mathcal H$. Assume that 
\begin{enumerate}[\rm (i)]
\item for every $x_{*} \in S$, $\lim_{t\to +\infty} \left\lVert x(t)-x_{*} \right\rVert$ exists;
\item every weak sequential limit point of $x(t)$, as $t \to +\infty$, belongs to $S$.
\end{enumerate}
Then $x(t)$ converges weakly as $t\to +\infty$, and its limit belongs to $S$.
\end{lemma}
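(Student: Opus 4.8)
The plan is to prove Opial's Lemma by the classical two-step scheme: first show that $x(t)$ admits at least one weak sequential cluster point and that every such point lies in $S$; then show that there is exactly one such point; and finally upgrade the uniqueness of the sequential cluster point to genuine weak convergence of the whole trajectory.

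First I would fix some $x_{*} \in S$. By hypothesis (i) the function $t \mapsto \|x(t) - x_{*}\|$ has a limit as $t \to +\infty$, hence is bounded on $[t_0,+\infty[$, so the trajectory $x(\cdot)$ is bounded in $\cH$. Since $\cH$ is a Hilbert space, bounded sequences possess weakly convergent subsequences; therefore along any $t_n \to +\infty$ the sequence $(x(t_n))_n$ has a weak sequential cluster point, which by hypothesis (ii) belongs to $S$. In particular the set of weak sequential cluster points of $x(t)$ as $t \to +\infty$ is a nonempty subset of $S$.

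Next I would prove uniqueness. Suppose $\overline{x}_1, \overline{x}_2 \in S$ are weak sequential cluster points, realized along $t_n \to +\infty$ and $s_n \to +\infty$ with $x(t_n) \rightharpoonup \overline{x}_1$ and $x(s_n) \rightharpoonup \overline{x}_2$. Expanding the squared norms gives, for every $t$,
\[
\|x(t) - \overline{x}_1\|^2 - \|x(t) - \overline{x}_2\|^2 = -2\langle x(t),\, \overline{x}_1 - \overline{x}_2\rangle + \|\overline{x}_1\|^2 - \|\overline{x}_2\|^2 .
\]
By (i) the two terms on the left-hand side converge as $t \to +\infty$, so $\ell := \lim_{t\to +\infty}\langle x(t),\, \overline{x}_1 - \overline{x}_2\rangle$ exists. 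Passing to the limit along $(t_n)$ yields $\ell = \langle \overline{x}_1,\, \overline{x}_1 - \overline{x}_2\rangle$, and along $(s_n)$ yields $\ell = \langle \overline{x}_2,\, \overline{x}_1 - \overline{x}_2\rangle$; subtracting gives $\|\overline{x}_1 - \overline{x}_2\|^2 = 0$, hence $\overline{x}_1 = \overline{x}_2 =: \overline{x} \in S$.

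Finally I would show $x(t) \rightharpoonup \overline{x}$ as $t \to +\infty$. If this failed, there would exist $\xi \in \cH$, $\varepsilon > 0$ and a sequence $t_n \to +\infty$ with $|\langle x(t_n) - \overline{x},\, \xi\rangle| \geq \varepsilon$ for all $n$; by boundedness some subsequence of $(x(t_n))_n$ would converge weakly to a point $z$, which by (ii) lies in $S$ and hence, by the uniqueness just established, equals $\overline{x}$ — contradicting the choice of $(t_n)$. Thus $x(t)$ converges weakly to $\overline{x} \in S$. The only mildly delicate point is this last passage from "unique sequential cluster point" to "weak convergence of the trajectory as $t \to +\infty$"; it rests on the weak sequential compactness of bounded subsets of the Hilbert space $\cH$, while the rest of the argument is routine bookkeeping.
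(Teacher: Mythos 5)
Your proof is correct: boundedness of the trajectory from (i), existence of weak sequential cluster points in $S$ via (ii), uniqueness through the convergence of $t \mapsto \langle x(t), \overline{x}_1 - \overline{x}_2\rangle$ deduced from (i), and the final subsequence/contradiction argument upgrading the unique cluster point to weak convergence of the whole trajectory. The paper itself states Opial's Lemma in the appendix without proof, as a classical auxiliary result, and your argument is precisely the standard one used in the literature, so there is nothing to reconcile.
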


The following result can be found in \cite[Lemma 5.1]{AAS}.
\begin{lemma}
\label{lem:lim-R}
Let $\delta > 0$.
Suppose that $F \colon \left[ \delta , + \infty \right) \to \R$ is locally absolutely continuous, bounded from below, and there exists $G \in \sL^{1} \left( \left[ \delta , + \infty \right); \R \right)$ such that for almost every $t \geq \delta$
\begin{equation*}
\dfrac{d}{dt} F \left( t \right) \leq G \left( t \right) .
\end{equation*}
Then the limit $\lim\limits_{t \to + \infty} F \left( t \right) \in \R$ exists.
\end{lemma}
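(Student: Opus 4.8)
The plan is to reduce the claim to the elementary fact that a nonincreasing, bounded-from-below real function converges. First I would define the auxiliary function $H \colon [\delta, +\infty) \to \R$ by $H(t) := F(t) - \int_{\delta}^{t} G(r)\,dr$. Since $G \in \sL^{1}([\delta,+\infty);\R)$, the primitive $t \mapsto \int_{\delta}^{t} G(r)\,dr$ is absolutely continuous on every bounded subinterval, has derivative $G(t)$ for almost every $t \geq \delta$, satisfies $\bigl| \int_{\delta}^{t} G(r)\,dr \bigr| \leq \int_{\delta}^{+\infty} |G(r)|\,dr =: M < +\infty$ for all $t$, and converges to a finite limit $\ell := \int_{\delta}^{+\infty} G(r)\,dr$ as $t \to +\infty$. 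Hence $H$, being the difference of two locally absolutely continuous functions, is itself locally absolutely continuous on $[\delta,+\infty)$.

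Next I would establish that $H$ is nonincreasing and bounded from below. For almost every $t \geq \delta$ we have $\dot{H}(t) = \dot{F}(t) - G(t) \leq 0$ by the hypothesis $\dot{F} \leq G$; since $H$ is locally absolutely continuous, the fundamental theorem of calculus gives $H(t_{2}) - H(t_{1}) = \int_{t_{1}}^{t_{2}} \dot{H}(r)\,dr \leq 0$ whenever $\delta \leq t_{1} \leq t_{2}$, so $H$ is nonincreasing on $[\delta,+\infty)$. Moreover, if $F \geq m$ on $[\delta,+\infty)$, then $H(t) = F(t) - \int_{\delta}^{t} G(r)\,dr \geq m - M$ for every $t$. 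A nonincreasing function that is bounded from below admits a finite limit, so $L_{H} := \lim_{t \to +\infty} H(t) \in \R$ exists. Writing $F(t) = H(t) + \int_{\delta}^{t} G(r)\,dr$ and passing to the limit yields $\lim_{t \to +\infty} F(t) = L_{H} + \ell \in \R$, which is the assertion.

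There is no genuine obstacle in this argument; the only points demanding a little care are the passage from the almost-everywhere sign condition $\dot{H} \leq 0$ to the pointwise monotonicity of $H$, which relies precisely on $H$ being locally absolutely continuous (so that the fundamental theorem of calculus applies and the behaviour of $\dot{H}$ on a null set is irrelevant), and the double role played by the $\sL^{1}$ hypothesis on $G$, namely that it furnishes the correction term whose derivative cancels $G$ while simultaneously keeping that term bounded, so that boundedness from below of $F$ transfers to $H$.
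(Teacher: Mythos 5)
Your proof is correct. Note that the paper does not prove this lemma at all --- it is quoted as an auxiliary result with a reference to \cite[Lemma 5.1]{AAS} --- and your argument (subtracting the primitive of $G$ to obtain a locally absolutely continuous, nonincreasing, bounded-from-below function, then adding back the convergent integral) is precisely the standard proof of that cited result, with the two delicate points, the a.e.-to-monotonicity step via the fundamental theorem of calculus for absolutely continuous functions and the double use of $G \in \sL^{1}$, handled correctly.
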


\begin{lemma}
\label{lem:quad}
Let $A, B, C \in \R$ be such that $A \neq 0$ and $B^{2} - AC \leq 0$.
Then, the following statements are true:
\begin{enumerate}[\rm (i)]
\item
\label{quad:vec-pos}
if $A > 0$, then it holds
\begin{equation*}
A \left\lVert x \right\rVert ^{2} + 2B \left\langle x , y \right\rangle + C          \left\lVert y \right\rVert ^{2} \geq 0 \quad \forall x, y \in \mathcal{H};
\end{equation*}
\item
\label{quad:vec}
if $A < 0$, then it holds
\begin{equation*}
A \left\lVert x \right\rVert ^{2} + 2B \left\langle x , y \right\rangle + C          \left\lVert y \right\rVert ^{2} \leq 0 \quad \forall x, y \in \mathcal{H}.
\end{equation*}
\end{enumerate}
\end{lemma}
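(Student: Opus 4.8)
The statement is a routine completing-the-square estimate for a real quadratic form, so the plan is simply to multiply through by $A$ and recognize a perfect square. First I would record the elementary consequence of the hypotheses: since $A \neq 0$ and $B^2 - AC \leq 0$, we have $AC \geq B^2 \geq 0$, so $C$ has the same sign as $A$ (or is zero). This is not strictly needed but clarifies why the two cases are exhaustive in the way they are used.

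For part \ref{quad:vec-pos}, assume $A > 0$ and fix $x, y \in \mathcal{H}$. The key computation is
\[
A\bigl( A\norm{x}^{2} + 2B\dotp{x}{y} + C\norm{y}^{2}\bigr) = A^{2}\norm{x}^{2} + 2AB\dotp{x}{y} + B^{2}\norm{y}^{2} + (AC - B^{2})\norm{y}^{2} = \norm{Ax + By}^{2} + (AC - B^{2})\norm{y}^{2}.
\]
Both summands on the right-hand side are nonnegative — the first trivially, the second because $AC - B^{2} \geq 0$ by hypothesis — so the left-hand side is nonnegative. Dividing by $A > 0$ yields the claimed inequality.

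For part \ref{quad:vec}, assume $A < 0$. Then $-A > 0$ and $(-B)^{2} - (-A)(-C) = B^{2} - AC \leq 0$, so applying part \ref{quad:vec-pos} with the triple $(-A, -B, -C)$ gives $-A\norm{x}^{2} - 2B\dotp{x}{y} - C\norm{y}^{2} \geq 0$ for all $x, y \in \mathcal{H}$, which is exactly the reverse inequality. I do not anticipate any obstacle here; the only point requiring a moment of care is to make sure the multiplication by $A$ is by a quantity of known sign, which is why the two cases are separated and the second is reduced to the first.
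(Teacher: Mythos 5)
Your proof is correct: multiplying by $A$, completing the square as $\left\lVert Ax + By \right\rVert^{2} + (AC - B^{2})\left\lVert y \right\rVert^{2} \geq 0$, and reducing the case $A<0$ to the case $A>0$ via the triple $(-A,-B,-C)$ are all valid steps. Note that the paper states this lemma in the appendix without proof, treating it as a standard auxiliary fact, so there is no argument to compare against; your completing-the-square verification is exactly the expected routine one.
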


The following lemma appears as Lemma A.3 in \cite{fOGDA with r} and generalizes Lemma A.2 from \cite{APR2}.

\begin{lemma}\label{lem:q}
Let $a > 0$, $r \in [0,1]$ and $q \colon [t_{0}, +\infty) \to \R$ be a continuously differentiable function such that 
\[
\lim_{t\to +\infty} \left( q(t) + \frac{t^r}{a} \dot{q}(t)\right) = \ell\in \R. 
\]
Then it holds $\lim_{t\to +\infty} q(t) = \ell$. 
\end{lemma}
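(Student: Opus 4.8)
The plan is to read the hypothesis as a linear first-order ODE for $q$, solve it explicitly via an integrating factor, and then show that in the resulting closed form the contribution of the initial data vanishes, the ``constant part'' converges to $\ell$, and the ``error part'' vanishes.

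First I would set $g(t) := q(t) + \frac{t^r}{a}\dot q(t)$, so that by hypothesis $g(t) \to \ell$ as $t \to +\infty$, and rewrite the relation as $\dot q(t) + \frac{a}{t^r} q(t) = \frac{a}{t^r} g(t)$. Introducing the integrating factor $\mu(t) := \exp\big(\int_{t_0}^t \frac{a}{s^r}\,ds\big)$, which is positive, continuously differentiable and satisfies $\dot\mu(t) = \frac{a}{t^r}\mu(t) > 0$, I would obtain $\frac{d}{dt}\big(\mu(t) q(t)\big) = \dot\mu(t)\, g(t)$ and, integrating from $t_0$ to $t$ and dividing by $\mu(t)$,
\[
q(t) = \frac{\mu(t_0)\, q(t_0)}{\mu(t)} + \frac{1}{\mu(t)}\int_{t_0}^t \dot\mu(s)\, g(s)\,ds .
\]

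The decisive elementary fact is that $\mu(t) \to +\infty$ as $t \to +\infty$: since $a > 0$, the exponent $\int_{t_0}^t \frac{a}{s^r}\,ds$ equals $\frac{a}{1-r}\big(t^{1-r} - t_0^{1-r}\big)$ when $r \in [0,1)$ and $a\ln(t/t_0)$ when $r = 1$, and both tend to $+\infty$. Hence the first term above tends to $0$. For the second term I would write $g = \ell + \varepsilon$ with $\varepsilon(s) \to 0$ and split
\[
\frac{1}{\mu(t)}\int_{t_0}^t \dot\mu(s)\, g(s)\,ds = \ell\,\frac{\mu(t) - \mu(t_0)}{\mu(t)} + \frac{1}{\mu(t)}\int_{t_0}^t \dot\mu(s)\,\varepsilon(s)\,ds ;
\]
the first summand converges to $\ell$, and for the second, given $\delta > 0$ I would choose $T \geq t_0$ with $|\varepsilon(s)| \leq \delta$ for all $s \geq T$ and estimate, using $\dot\mu \geq 0$,
\[
\left|\frac{1}{\mu(t)}\int_{t_0}^t \dot\mu(s)\,\varepsilon(s)\,ds\right| \leq \frac{1}{\mu(t)}\int_{t_0}^T \dot\mu(s)\,|\varepsilon(s)|\,ds + \delta\,\frac{\mu(t) - \mu(T)}{\mu(t)} .
\]
As $t \to +\infty$ the first term on the right vanishes (its numerator is fixed and $\mu(t) \to +\infty$) and the second is $\leq \delta$; letting $\delta \downarrow 0$ shows this contribution vanishes. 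Collecting the three pieces yields $q(t) \to \ell$.

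I do not anticipate a genuine obstacle: this is a classical averaging (Tauberian-type) lemma, of the same flavour as Lemma A.2 in \cite{APR2}. The only point that deserves care is the verification that $\mu(t) \to +\infty$ uniformly over the whole range $r \in [0,1]$, that is, that $\int^{+\infty} s^{-r}\,ds = +\infty$ for every such $r$ --- this is precisely what makes both the initial-data term and the error term negligible --- after which the argument reduces to routine $\varepsilon$--$\delta$ bookkeeping.
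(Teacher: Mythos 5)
Your argument is correct: the paper itself does not prove this lemma (it is quoted from Lemma A.3 of \cite{fOGDA with r}, generalizing \cite{APR2}), and your integrating-factor/averaging argument is exactly the standard proof of that cited result, with the key point — divergence of $\int^{+\infty} a s^{-r}\,ds$ for all $r\in[0,1]$ — correctly identified. The only (cosmetic) precaution is to start the integration at some $t_{1}\geq\max\{t_{0},1\}$ so that dividing by $t^{r}$ and forming $\int_{t_{1}}^{t} a s^{-r}\,ds$ is unproblematic when $t_{0}=0$; since only the behaviour as $t\to+\infty$ matters, this changes nothing.
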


 \section*{Acknowledgments}
This work was started in the last year of Hedy Attouch’s life, just months before his passing. R.I. Bo\c t, D. A. Hulett and D.-K. Nguyen would like to take this opportunity to pay tribute to a remarkable mathematician, a kind and generous soul, whose absence is deeply felt.


\end{document}